\newtheorem{thm}{Theorem} [section]
\theoremstyle{definition}
\newcommand{\bal}{{\mbox{\boldmath$\alpha$}}}
\newtheorem{Def}[thm]{Definition}
\newtheorem{Question}[thm]{Question}
\newtheorem{rem}[thm]{Remark}
\theoremstyle{plain}
\newtheorem{prop}[thm]{Proposition}
\newtheorem{lem}[thm]{Lemma}
\newtheorem{cor}[thm]{Corollary}
\numberwithin{equation}{section}
\newcommand{\Hom}{\mathrm{Hom}}
\newcommand{\KK}{\mathbb K}
\newcommand{\extp}{\@ifnextchar^\@extp{\@extp^{\,}}}
\def\@extp^#1{\mathop{\bigwedge\nolimits^{\!#1}}}
\newcommand*{\@rowstyle}{}
\newcommand*{\rowstyle}[1]{
  \gdef\@rowstyle{#1}%
  \@rowstyle\ignorespaces%
}
\newcolumntype{=}{
  >{\gdef\@rowstyle{}}%
}
\newcolumntype{+}{
  >{\@rowstyle}%
}
\let\bbordermatrix\bordermatrix
\patchcmd{\bbordermatrix}{8.75}{4.75}{}{}
\patchcmd{\bbordermatrix}{\left(}{\left[}{}{}
\patchcmd{\bbordermatrix}{\right)}{\right]}{}{}
\DeclareMathOperator*{\im}{im}
\DeclareMathOperator*{\Rep}{Rep}
\DeclareMathOperator*{\Vecc}{Vec}
\DeclareMathOperator*{\sVec}{sVec}
\DeclareMathOperator{\Ver}{Ver}
\DeclareMathOperator{\ver}{Ver}
\newcommand{\un}{\mathbbm{1}}
\title[Classification of Non-Degenerate Forms in $\Ver_4^+$]{Classification of Non-Degenerate Symmetric Bilinear and Quadratic Forms in the Verlinde Category $\Ver_4^+$}
\author[I. Chen]{Iz Chen}
\address{Los Altos High School, Los Altos, CA 94022}
\email{ichen4419@gmail.com}
\author[A.S. Kannan]{Arun S. Kannan}
\address{Department of Mathematics, Massachusetts Institute of Technology, Cambridge, MA 02139}
\email{akannan@mit.edu}
\author[K. Pothapragada]{Krishna Pothapragada}
\address{Naperville North High School, Naperville, IL 60563}
\email{krishnapothapragada2024@gmail.com }
\begin{document}

\begin{abstract}



Although Deligne’s theorem classifies all symmetric tensor categories (STCs) with moderate growth over algebraically closed fields of characteristic zero, the classification does not extend to positive characteristic. At the forefront of the study of STCs is the search for an analog to Deligne's theorem in positive characteristic, and it has become increasingly apparent that the Verlinde categories are to play a significant role. Moreover, these categories are largely unstudied, but have already shown very interesting phenomena as both a generalization of and a departure from superalgebra and supergeometry. In this paper, we study $\Ver_4^+$, the simplest non-trivial Verlinde category in characteristic $2$. In particular, we classify all isomorphism classes of non-degenerate symmetric bilinear forms and non-degenerate quadratic forms and study the associated Witt semi-ring that arises from the addition and multiplication operations on bilinear forms.
\end{abstract}

\maketitle

\setcounter{tocdepth}{1}
\tableofcontents

\section{Introduction}
\subsection{The broader picture: the quest for Deligne's theorem in positive characteristic}
While the study of the representation theory of groups initially started by finding and classifying individual representations, the modern perspective is to consider the category of all representations in totality. The notion of a \textit{symmetric tensor category} (always assumed to be of \textit{moderate growth} \footnote{A symmetric tensor category has \textit{moderate growth} if the lengths of tensor powers of every object are bounded by an exponential function. Although we will assume all STCs are of moderate growth, the study of STCs of non-moderate growth has also attracted attention (see \cite{deligne1982tannakian, deligne2002categories, deligne2007tannakiennes, etingof_complex_rank_2, harman2022oligomorphic} for examples of such categories).} in this paper) arises by axiomatizing the fundamental properties of representation categories of groups (see \cite{etingof2016tensor, etingof2021lectures} for basic details). A symmetric tensor category (STC) can be thought of as a ``home" to do commutative algebra and algebraic geometry without the language of vectors and vector spaces. One implication is that given an STC $\mathcal{C}$, we can construct affine group schemes over $\mathcal{C}$, whose representation categories give us other STCs. These are all said to \textit{fiber} over $\mathcal{C}$. Because it is shown in \cite{coulembier2023incompressible} that every STC fibers over a so-called \textit{incompressible} STC, it remains to classify the incompressible STCs.
\par
The STCs defined over an algebraically closed field $\KK$ of characteristic $p = 0$ are well-understood thanks to Deligne's theorem (see \cite{deligne2002categories, deligne2007tannakiennes}). This theorem states that, up to parity action, all manifestations of such STCs are simply representation categories of supergroup schemes, i.e.\@ they fiber over the category $\sVec_\KK$ of supervector spaces. This means the category $\Vecc_\KK$ of vector spaces and $\sVec_\KK$ are the only incompressible STCs in characteristic zero, and therefore, characteristic zero affords only (super)algebra and (super)geometry.
\par
As is par for the course, the story is completely different in positive characteristic. The most basic counterexample when the characteristic $p$ is larger than $3$ is the \textit{Verlinde category} $\Ver_p$, which contains $\sVec_\KK$ as a subcategory (see \cite{georgiev1994fusion, gelfand1992examples, ostrik2020symmetric}). This STC arises as the \textit{semisimplification} of the representation category $\Rep \bal_p = \Rep \KK[t]/(t^p)$ of the first Frobenius kernel $\bal_p$ of the additive group scheme $\mathbb{G}_a$ (cf.\@  \cite{etingof2021semisimplification}). It can be thought of as the positive-characteristic analog to $\Rep SL_2(\mathbb{C})$ with some truncation involved when taking tensor products. For instance, when $p = 5$, there is an object $X \in \Ver_5$ (which can be thought of as the analog of the adjoint representation of $SL_2\mathbb{C}$) that satisfies $\un \oplus X = X \otimes X$, where $\un$ is the unit object in the category. If this category were to fiber over supervector spaces, then $X$ would need to have integral dimension; this is impossible because there is no integral solution to $1 + \dim X = (\dim X)^2$.
\par
With Deligne's theorem failing in positive characteristic, much work has been done in recent years to find a suitable analog. The category $\Ver_p$ has served as a reasonable starting point: first, Ostrik proved in \cite{ostrik2020symmetric} that every semisimple STC fibers over $\Ver_p$, and this was later strengthened in \cite{coulembier2022frobenius} to say that an STC fibers over $\Ver_p$ if and only if it is \textit{Frobenius exact}. Indeed, the Verlinde category $\Ver_p$ sits in a larger sequence 
\[\Ver_p \subseteq \Ver_{p^2} \subseteq \cdots \subseteq \Ver_{p^\infty}\]
of incompressible STCs called the Verlinde categories. These were first discovered for $p = 2$ in \cite{benson2019symmetric} and then generalized for all $p > 0$ in \cite{benson2023new}. Therein, it is conjectured that the correct replacement for $\sVec_\KK$ in Deligne's theorem is $\Ver_{p^\infty}$, which is to say that every STC fibers over $\Ver_{p^\infty}$. 

\subsection{Content of this paper}
Although they arise out of the search for Deligne's theorem in positive characteristic, the Verlinde categories seem to be interesting objects in their own right as they exhibit new phenomena all the while generalizing the classical theory. For instance, in \cite{venkatesh_glx_2022}, the finite-length representations of the group scheme $GL(X)$ for an object $X \in \Ver_p$ are classified. Therein, the corresponding generalization of a torus no longer has one-dimensional representations, yet its representation theory is still semisimple.
\par
However, for the most part, these Verlinde categories have barely been studied. In this paper, we consider the simplest example in characteristic $2$, which is $\Ver_4^+$, a subcategory of $\Ver_4 = \Ver_{2^2}$ that was first shown to not fiber over the category of vector spaces in \cite{venkatesh_hilbert_basis} (note that $\Ver_2$ is just the category of vector spaces). We usually cannot use the language of vector spaces to describe objects in STCs, but as a tensor category, $\Ver_4^+$ is just $\Rep \KK[t]/(t^2)$  (and is therefore not semisimple). The symmetric structure, however, is different and arises from equipping the Hopf algebra $\KK[t]/(t^2)$ with a triangular structure (see \cite[\S8.3]{etingof2016tensor}) with $R$-matrix given by 

\[ R \coloneqq 1 \otimes 1 + t \otimes t.\]
In this category, we classify all alternating bilinear and all symmetric bilinear forms, up to isomorphism. We also describe how different isomorphism classes of bilinear forms interact when we take sum and product (after suitably defining such notions). 

Here, we say a form $B : U \otimes U \rightarrow \KK$ on an object $U \in \Ver_4^+$ is \textit{alternating} (resp.\@  \textit{symmetric}) if it vanishes on the kernel (resp.\@ image) of the map $1_{U \otimes U} - c_{U,U}$, where $c_{U,U} : U \otimes U \rightarrow U \otimes U$ is the braiding in this category given by
\[c_{U,U}(u \otimes u') = u' \otimes u + (t.u')\otimes(t.u)\] 
for $u, u' \in U$. In semisimple STCs like $\Ver_p$, the classification reduces to the vector space setting. In $\ver_4^+$, the presence of the two-dimensional indecomposable representation $P$ of $\KK[t]/(t^2)$ makes the classification more challenging. 
\par
We find that there are ultimately six families of non-degenerate symmetric bilinear forms, two of which are indexed by a parameter. We also calculate the Witt semi-ring, which is the semi-ring of isomorphism classes of non-degenerate symmetric bilinear forms.
\par
In \S\ref{forms_defs}, we define and establish basic properties about bilinear and quadratic forms in symmetric tensor categories. In \S\ref{basic_properties_section}, we define the Verlinde category $\Ver_4^+$ and offer descriptions of forms specific to this category. In Section \S\ref{classification}, we first classify non-degenerate symmetric bilinear forms on the object $nP$, then use this to recover the complete classification for an arbitrary object in $\Ver_4^+$ as well as the classification of non-degenerate quadratic forms. Finally, we describe the structure of the Witt semi-ring in Section \S\ref{witt}.

\subsection{Acknowledgements}
This paper is the result of MIT PRIMES-USA, a program that provides high school students an opportunity to engage in research-level mathematics and in which the second author mentored the first and third authors. The authors would like to thank the MIT PRIMES-USA program and its coordinators Prof.\@ Pavel Etingof, Dr.\@ Slava Gerovitch, and Dr.\@ Tanya Khovanova for providing the opportunity for this research experience. We would also like to thank Serina Hu for useful discussions. The second author would also like to thank Pavel Etingof for useful discussions and feedback. This paper is based upon work supported by The National Science Foundation Graduate Research Fellowship Program under Grant No.\@ 1842490 awarded to the second author.

\section{Forms in Symmetric Tensor Categories}\label{forms_defs}
In this section, we define bilinear and quadratic forms on objects in arbitrary symmetric tensor categories. From now on, when we say ``form'', we will always refer to either a bilinear form that is symmetric, skew-symmetric, or alternating, or a quadratic form (notions which we will define shortly). See \cite{etingof2016tensor,etingof2021lectures} for more details on symmetric tensor categories. We try to follow the notation in these references as close as possible.

\subsection{Symmetric, skew-symmetric, alternating, and quadratic forms}
\subsubsection{Extending definitions to arbitrary symmetric tensor categories}
Let us first remember what happens when working over the category of vector spaces. Given a vector space $V$ over a field $\mathbb{F}$, we say a bilinear form $\beta: V \otimes V \rightarrow \mathbb{F}$ is

\begin{enumerate}
    \item \textit{symmetric} if $\beta(v\otimes w) = \beta(w\otimes v)$ for all $v, w \in V$;
    \item \textit{skew-symmetric} if $\beta(v\otimes w) = -\beta(w\otimes v)$ for all $v, w \in V$;
    \item and \textit{alternating} if $\beta(v\otimes v) = 0$ for all $v \in V$.
\end{enumerate}
A map $q: V \rightarrow \mathbb{F}$ is a \textit{quadratic form} if $q(\lambda v) = \lambda^2 q(v)$ for all $v \in V, \lambda \in \KK$ and if the map $B_q: V \otimes V \rightarrow \KK$ given by $B_q(v\otimes w) = q(v+w) - q(v) - q(w)$ is symmetric. In order to extend these definitions to arbitrary symmetric tensor categories, we need to rephrase them categorically.
\par
Let $\mathcal{C}$ be any symmetric tensor category over $\mathbb{F}$ with braiding $c$. Recall that for any object $V \in \mathcal{C}$ the symmetric group $S_2$ on two symbols acts on $V\otimes V$ by sending the non-identity element to the morphism $c_{V,V}$. This defines various functors on $\mathcal{C}$:

\begin{enumerate}
    \item the second symmetric power functor $S^2$, where $S^2(V) = (V \otimes V)_{S_2} = (V \otimes V)/\im (1_{V\otimes V} - c_{V,V} )$;
    \item the second exterior power functor $\wedge^2$, where $\wedge^2(V) = (V \otimes V)/\im(1_{V\otimes V} + c_{V,V})$;
    \item and the second divided power functor $\Gamma^2$, where $\Gamma^2(V) = \ker(1_{V\otimes V} - c_{V,V})$.
\end{enumerate}
Now, looking at the definitions of symmetric, skew-symmetric, and alternating forms above, we can define these notions categorically as follows:

\begin{Def}
    Let $V \in \mathcal{C}$ be an object in a symmetric tensor category $\mathcal{C}$ over $\mathbb{F}$. A map $\beta: V \otimes V \rightarrow \mathbb{F}$ is called a \textit{bilinear form} on $V$. We say $\beta$ is

    \begin{enumerate}
        \item \textit{symmetric} if $\beta = \beta \circ c_{V,V}$ or equivalently $\beta$ factors through a map $S^2(V) \rightarrow \un$;
        \item \textit{skew-symmetric} if $\beta = -\beta \circ c_{V,V}$ or equivalently $\beta$ factors through a map $\wedge^2(V) \rightarrow \un$;
        \item and \textit{alternating} if $\beta|_{\Gamma^2(V)} = 0$ or equivalently it factors through a map $(V \otimes V)/\Gamma^2(V) \rightarrow \un$.
    \end{enumerate}
\end{Def}
The restriction $\beta|_W$ of a bilinear form $\beta$ on $V$ to a subobject $W$ is given by restricting to the subobject $W \otimes W \subseteq V \otimes V$. It is obvious that these definitions coincide with the usual definitions when working over vector spaces. We also see that any alternating form is skew-symmetric as $(1 - c_{V,V})(1 + c_{V,V}) = 0$, so a form that vanishes on the kernel of $1 - c_{V,V}$ also vanishes on the image of $1 + c_{V,V}$. We can also define quadratic forms:

\begin{Def}
    Let $V \in \mathcal{C}$ be an object in a symmetric tensor category $\mathcal{C}$ with unit object $\un$. A \textit{quadratic form} on $V$ is a map $\Gamma^2(V) \rightarrow \un$.
\end{Def}
The restriction $q|_W$ of a quadratic form $q$ on $V$ to a subobject $W$ is given by restricting to the subobject $\Gamma^2(W) \subseteq \Gamma^2(V)$. To see this definition coincides with the usual definition of a quadratic form when working over vector spaces, recall that if $q: V \rightarrow \mathbb{F}$ is a quadratic form, then 

\begin{equation}\label{coordinates}
    q(a_1e_1 + \cdots + a_ne_n) = \sum_{1\leq i \leq j \leq n} \lambda_{ij}a_ia_j,
\end{equation}
where $V$ has basis $\{e_1, \dots, e_n\}$ of $V$ and $\lambda_{ij} \in \mathbb{F}$ are suitable coefficients. Therefore, the space of quadratic forms is spanned by the elements of $S^2(V^*)$, which is isomorphic to $\Gamma^2(V)^*$. This definition also makes it clear that any symmetric bilinear form, given by a map $S^2(V) \rightarrow \un$, yields a quadratic form given by precomposing with the composition $\Gamma^2(V) \hookrightarrow V \otimes V \twoheadrightarrow S^2(V)$. In the case the underlying characteristic is not $2$, the decomposition $V \otimes V \cong S^2(V) \oplus \wedge^2(V)$ and the isomorphism $S^2(V) \cong \Gamma^2(V)$ implies there is a bijection between symmetric bilinear forms and quadratic forms. We will denote the associated symmetric bilinear form to a quadratic form $q$ by $\beta_q$ (in characteristic $2$ we will use the same notation for the associated form but a different definition, see \S \ref{char2considerations}). We are primarily interested when forms are \textit{non-degenerate}:

\begin{Def}
    A bilinear form $\beta : V \otimes V \rightarrow \un$ on an object $V$ in a symmetric tensor category $\mathcal{C}$ is \textit{non-degenerate} if the image of $\beta$, denoted $\beta'$, under the isomorphism 
    
    \[\Hom_{\mathcal{C}}(V\otimes V, \un) \rightarrow \Hom_{\mathcal{C}}(V, V^*)\]
    afforded by tensor-hom adjunction is an isomorphism itself. A quadratic form is \textit{non-degenerate} if the associated symmetric form is non-degenerate.
\end{Def}
We call the kernel of the map $\beta'$ the \textit{radical} of $\beta$.

\subsubsection{Additional considerations in characteristic $2$}\label{char2considerations}
Now suppose that we are working over a symmetric tensor category $\mathcal{C}$ in characteristic $2$ for this entire subsection. Then, for any $V \in \mathcal{C}$, the $S_2$-module $V\otimes V$ does not necessarily split, and $\wedge^2(V) = S^2(V)$. This means that the notion of a skew-symmetric form is redundant and that all alternating forms are symmetric. However, the reverse implication does not hold. Moreover, there is no longer a bijection between quadratic forms and symmetric forms; while every symmetric form still yields a quadratic form, it is not unique (alternating forms yield the zero quadratic form, for instance).
\par
In order to associate a symmetric form to a quadratic form, we need look at the structure of $V \otimes V$ in characteristic $2$. First, define the \textit{Frobenius twist} $V^{(1)}$ of an object $V$ in a symmetric tensor category to be the image of the following composition (first defined in \cite{coulembier2020tannakian}):

\begin{equation}\label{frobenius_twist}
    \Gamma^2(V) \hookrightarrow V \otimes V \twoheadrightarrow S^2(V).
\end{equation}
Let $A^2(V)$ be the kernel of the composite map. Then, $\Gamma^2(V)$ is an extension of its submodule $A^2(V)$ by $V^{(1)}$. Similarly, let $\mathbf{A}^2(V)$ denote the cokernel of the composite map, so that $S^2(V)$ is an extension of its submodule $V^{(1)}$ by $\mathbf{A}^2(V)$. Then, we have the following:

\begin{prop}\label{char2_exterior_powers_same}
    The following are true: 
    \[A^2(V) = \im(1 - c_{V,V});\]
    \[\mathbf{A}^2(V) \cong  (V \otimes V)/\ker(1- c_{V,V}),\]
    which implies that $A^2(V) \cong \mathbf{A}^2(V)$ by the first isomorphism theorem. 
\end{prop}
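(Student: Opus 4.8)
The plan is to work entirely within the abelian structure of $\mathcal{C}$, reducing every assertion to the single endomorphism $\phi \coloneqq 1_{V\otimes V} - c_{V,V}$ of $T \coloneqq V \otimes V$ together with the standard isomorphism theorems. The crucial preliminary observation is that the braiding of a symmetric category satisfies $c_{V,V}^2 = 1_{V\otimes V}$, so that $(1 - c_{V,V})(1 + c_{V,V}) = 1 - c_{V,V}^2 = 0$; since the characteristic is $2$ we have $1 + c_{V,V} = 1 - c_{V,V}$, whence $\phi^2 = 0$. This yields the containment $\im\phi \subseteq \ker\phi$, i.e.\ $\im(1 - c_{V,V}) \subseteq \Gamma^2(V)$, which drives both identifications.

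First I would identify $A^2(V)$. By definition $A^2(V) = \ker f$, where $f$ is the composite $\Gamma^2(V) \hookrightarrow T \twoheadrightarrow S^2(V) = T/\im\phi$. As $f$ is the restriction of the quotient $q \colon T \twoheadrightarrow T/\im\phi$ to the subobject $\ker\phi = \Gamma^2(V)$, its kernel is the pullback $\ker\phi \cap \im\phi$; but $\im\phi \subseteq \ker\phi$, so this intersection collapses to $\im\phi$. Hence $A^2(V) = \im(1 - c_{V,V})$, which is the first claim.

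Next I would compute the cokernel $\mathbf{A}^2(V) = \mathrm{coker}\, f = S^2(V)/\im f$. Here $\im f$ is precisely the Frobenius twist $V^{(1)}$, the image of $\ker\phi$ under $q$, namely $(\ker\phi + \im\phi)/\im\phi = \ker\phi/\im\phi$ (again using $\im\phi \subseteq \ker\phi$). The third isomorphism theorem then gives
\[
\mathbf{A}^2(V) = \frac{T/\im\phi}{\ker\phi/\im\phi} \cong T/\ker\phi = (V \otimes V)/\ker(1 - c_{V,V}),
\]
which is the second claim. The final isomorphism $A^2(V) \cong \mathbf{A}^2(V)$ is then immediate from the first isomorphism theorem applied to $\phi$ itself: $T/\ker\phi \cong \im\phi$, so $\mathbf{A}^2(V) \cong T/\ker\phi \cong \im\phi = A^2(V)$.

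The computations are routine, and I expect the genuine work to lie not in any single step but in ensuring categorical rigor: because $\mathcal{C}$ is not a category of sets, every ``$\cap$'', every ``$+$'' of subobjects, and every invocation of an isomorphism theorem must be read through the abelian-category formalism (as pullbacks, pushouts, and exactness of the relevant short exact sequences) rather than by chasing elements. The one conceptual fulcrum I would be careful to state cleanly is the identity $\phi^2 = 0$ in characteristic $2$, since every containment and every quotient identification above is a formal consequence of it.
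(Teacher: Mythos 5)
Your proof is correct and follows essentially the same route as the paper's: both rest on $(1-c_{V,V})^2=0$ in characteristic $2$ to get $\im(1-c_{V,V})\subseteq\Gamma^2(V)$, identify $A^2(V)$ with $\im(1-c_{V,V})$, and then apply the third isomorphism theorem to $S^2(V)/V^{(1)}$ and the first isomorphism theorem to $1-c_{V,V}$ to conclude $A^2(V)\cong\mathbf{A}^2(V)$. Your phrasing via the single endomorphism $\phi$ and the intersection $\ker\phi\cap\im\phi$ is a slightly cleaner packaging of the same argument, not a different one.
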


\begin{proof}
    Because $A^2(V)$ is the kernel of the map in \eqref{frobenius_twist}, it must be contained in $\im(1 - c_{V,V})$ by definition of $S^2(V)$. But then it is clear that $\im(1 - c_{V,V}) \subseteq \Gamma^2(V)$ because $(1-c_{V,V})^2 = 0$ in characteristic $2$, so $\im(1 - c_{V,V})$ lies in $A^2(V)$ as it is will map to zero under the map in \eqref{frobenius_twist}. This proves the first statement. 
    \par
    By the definition, $\mathbf{A}^2(V) = S^2(V)/V^{(1)}$. Then, notice that $S^2(V) = (V\otimes V)/\im(1 - c_{V,V})$ and $V^{(1)} = \Gamma^2(V)/\im(1 - c_{V,V})$. Apply the third isomorphism theorem to get the second statement.
\end{proof}
Notice that the Frobenius twist is the obstruction to identifying quadratic forms with symmetric bilinear forms. Nevertheless, this lets us make the following definition:

\begin{Def}
    Let $V$ be an object in an STC $\mathcal{C}$ over a field of characteristic $2$. Given a quadratic form $q: \Gamma^2(V) \rightarrow \un$, we can associate a bilinear form $\beta_q: V \otimes V \rightarrow \un$ to $q$ by first restricting to $A^2(V) \subset \Gamma^2(V)$, and then defining $\beta_q$ to be the composition 

    \[V \otimes V \twoheadrightarrow \mathbf{A}^2(V) \cong A^2(V) \xrightarrow{q} \un.\]
\end{Def}
Let's see how this generalizes what happens over vector spaces. Let $V$ be a vector space over a field of characteristic $2$ with basis $\{e_1, \dots, e_n\}$. Then, $A^2(V)$ has basis $\{e_i\otimes e_j + e_j\otimes e_i\}_{1  \leq i < j \leq n}$, which extends to a basis of $\Gamma^2(V)$ by including $\{e_i\otimes e_i\}_{1 \leq i \leq n}$. The space $\mathbf{A}^2(V)$ has basis $\{v_iv_j\}_{1 \leq i < j \leq n}$, and the isomorphism $A^2(V) \rightarrow \mathbf{A}^2(V)$ is given by sending $v_i\otimes v_j + v_j\otimes v_i$ to $v_iv_j$. Now, let $q: \Gamma^2(V) \rightarrow \un$ be a quadratic form, which we write in coordinates as in equation \eqref{coordinates}. It sends $v_i \otimes v_j + v_j \otimes v_i$ for $1 \leq i < j \leq n$ to $\lambda_{ij}$. This makes it clear that after the identification $A^2(V) \cong \mathbf{A}^2(V)$ the associated bilinear form $\beta_q: V \otimes V \rightarrow \un$ is given by $\beta_q(e_i, e_j) = \lambda_{ij}$ for $1 \leq i < j \leq n$ and $\beta_q(e_i, e_i) = 0$ for all $1 \leq i \leq n$. This agrees with the definition $\beta_q(v, w) = q(v+w) - q(v) - q(w)$ when working over vector spaces. We also have the following generalization:

\begin{prop}\label{restrict_beta_q}
    Let $q$ be a quadratic form on an object $V$ in $\mathcal{C}$. Then, $\beta_{q|_W} = \beta_q|_W$ for any subobject $W \subseteq V$.
\end{prop}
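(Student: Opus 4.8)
The plan is to reduce the statement to naturality of the braiding by first replacing the definition of $\beta_q$ with a more tractable one. By Proposition~\ref{char2_exterior_powers_same}, the isomorphism $\mathbf{A}^2(V)\cong A^2(V)$ is the one induced by $1-c_{V,V}$: this morphism has kernel $\ker(1-c_{V,V})=\Gamma^2(V)$ and image $A^2(V)$, so the defining composition $V\otimes V\twoheadrightarrow\mathbf{A}^2(V)\cong A^2(V)$ is nothing but $1-c_{V,V}$ corestricted to its image. Since $A^2(V)=\im(1-c_{V,V})\subseteq\Gamma^2(V)$ and $q$ is defined on all of $\Gamma^2(V)$, I can therefore write
\[\beta_q = q\circ(1-c_{V,V}),\]
where $1-c_{V,V}$ is viewed as a map $V\otimes V\to\Gamma^2(V)$ and $q\colon\Gamma^2(V)\to\un$. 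The identical reformulation for $W$ gives $\beta_{q|_W}=(q|_W)\circ(1-c_{W,W})$.

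Next I would invoke naturality of the braiding with respect to the inclusion $\iota\colon W\hookrightarrow V$, which yields
\[(1-c_{V,V})\circ(\iota\otimes\iota)=(\iota\otimes\iota)\circ(1-c_{W,W}).\]
In particular $\iota\otimes\iota$ carries $\Gamma^2(W)=\ker(1-c_{W,W})$ into $\Gamma^2(V)$, and the map it induces on divided powers is exactly the inclusion $\Gamma^2(W)\hookrightarrow\Gamma^2(V)$ through which the definition $q|_W = q\circ\bigl(\Gamma^2(W)\hookrightarrow\Gamma^2(V)\bigr)$ is phrased.

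Finally, I would chain these together. On one hand, using the definition of restriction of a bilinear form, $\beta_q|_W=\beta_q\circ(\iota\otimes\iota)=q\circ(1-c_{V,V})\circ(\iota\otimes\iota)$, which by the naturality square equals $q\circ(\iota\otimes\iota)\circ(1-c_{W,W})$. On the other hand, $\beta_{q|_W}=(q|_W)\circ(1-c_{W,W})=q\circ\bigl(\Gamma^2(W)\hookrightarrow\Gamma^2(V)\bigr)\circ(1-c_{W,W})$. Because $1-c_{W,W}$ already lands in $\Gamma^2(W)$ and the divided-power inclusion is precisely the restriction of $\iota\otimes\iota$ to $\Gamma^2(W)$, the two composites coincide, giving $\beta_{q|_W}=\beta_q|_W$.

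I expect the only real obstacle to be bookkeeping: keeping the various corestrictions of $1-c$ and the two isomorphisms $\mathbf{A}^2\cong A^2$ (for $V$ and for $W$) compatible with $\iota\otimes\iota$. The reformulation in the first paragraph is designed precisely to sidestep this, since it removes the quotient $\mathbf{A}^2$ from the argument and leaves only the single morphism $1-c$, for which the naturality square is immediate. The one point to verify carefully is that enlarging the codomain of $1-c$ from $A^2$ to $\Gamma^2$ changes none of the composites, which holds because $q$ is defined on all of $\Gamma^2$ and $1-c$ factors through $A^2$ in either case.
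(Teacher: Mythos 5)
Your proof is correct, but it takes a different route from the paper's. The paper works directly with the quotient description of $\beta_q$: it observes that $\beta_q|_W$ factors through $(\Gamma^2(V)+W\otimes W)/\Gamma^2(V)$, applies the second isomorphism theorem to rewrite this as $W\otimes W/(W\otimes W\cap\Gamma^2(V))$, identifies $W\otimes W\cap\Gamma^2(V)=\Gamma^2(W)$, and then chains the identifications $W\otimes W/\Gamma^2(W)\cong\mathbf{A}^2(W)\cong A^2(W)\rightarrow\un$ to recognize the result as $\beta_{q|_W}$. You instead collapse the entire defining composition into the single morphism identity $\beta_q=q\circ(1-c_{V,V})$ --- justified because Proposition \ref{char2_exterior_powers_same} produces the isomorphism $\mathbf{A}^2(V)\cong A^2(V)$ precisely as the map induced by $1-c_{V,V}$, so that projection followed by induced isomorphism is the corestriction of $1-c_{V,V}$ itself --- and then the statement reduces to the naturality square $(1-c_{V,V})\circ(\iota\otimes\iota)=(\iota\otimes\iota)\circ(1-c_{W,W})$. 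The two arguments have the same underlying content: the paper's unproved step $W\otimes W\cap\Gamma^2(V)=\Gamma^2(W)$ is exactly the kernel-side consequence of the naturality you invoke. What your version buys is the elimination of all quotient bookkeeping (no second or third isomorphism theorem needed) and a transparent identification of where naturality of the braiding enters; what the paper's version buys is that it works verbatim from the definition of $\beta_q$ without first having to verify that the composite $V\otimes V\twoheadrightarrow\mathbf{A}^2(V)\cong A^2(V)$ really is $1-c_{V,V}$, which is the one point in your argument requiring care (two surjections with equal kernel and image need not coincide, so you genuinely need that the isomorphism is the induced one, as you correctly note).
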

\begin{proof}
    The restriction of $\beta_q$ to $W \otimes W$ will not just pass through $V \otimes V/\Gamma^2(V)$ but also through $(\Gamma^2(V) + W\otimes W)/\Gamma^2(V)$. Therefore, $\beta_q|_W$ is given by the following composition: 

    \begin{align*}
        W \otimes W &\rightarrow (\Gamma^2(V) + W\otimes W)/\Gamma^2(V) \\ &\cong  W\otimes W/(W \otimes W \cap \Gamma^2(V)) \\ &\cong W \otimes W/\Gamma^2(W) \cong \mathbf{A}^2(W) \cong A^2(W) \rightarrow \un.
    \end{align*}
    But this is just $\beta_{q|_W}$.
\end{proof}

\par
Finally, it would be natural to have a notion of what it means for a quadratic form to be non-degenerate. However, even in the vector-space setting the definition does not appear to be standardized across the literature. For instance, given a quadratic form $q$ on $V$, some require that the associated symmetric form $\beta_q$ be non-degenerate, but this is appears to be too restrictive (for instance, there would be no non-degenerate quadratic forms on an odd-dimensional vector space). 
\par
Another definition, as in \cite{elman2008algebraic}, is that $q$ is non-degenerate if there is some algebraically closed field extension over which the radical $\mathfrak{r}_q \coloneqq \{v \in \mathfrak{r} \mid q(v) = 0\}$ of $q$ is just the zero vector. Here $\mathfrak{r}$ is the radical of the form associated to $q$. This definition affords a geometric characterization where the orthogonal group scheme associated to the quadratic form is reductive (simple when dimension larger than $4$), even over odd-dimensional spaces.
\par
It is unclear to us how to generalize this definition to the $\Ver_4^+$ setting, let alone to arbitrary symmetric tensor categories in characteristic $2$. This difficulty arises from the following facts. First of all, there are objects in $\Ver_4^+$ whose Frobenius twist is zero, which means that there is a bijection between quadratic and bilinear forms on them (and therefore it makes sense that a quadratic form is non-degenerate only if the associated form is non-degenerate). But the failure of the Frobenius functor to be exact means that these objects have subobjects whose Frobenius twist is non-zero. This means the radical of the form could have a non-zero Frobenius twist.
\par
Therefore, for simplicity, we will say $q$ is \textit{non-degenerate} if $\beta_q$ is non-degenerate.


\subsection{Operations on forms}\label{forms_operations}
In this subsection, we describe various operations on forms. Let us fix a symmetric tensor category $\mathcal{C}$ with braiding $c$ (we will always suppress associativity morphisms).

\begin{Def}
    Let $\beta$ and $\gamma$ be two bilinear forms objects $V,W \in \mathcal{C}$, respectively. We say that $\beta$ and $\gamma$ are \textit{equivalent} if there exists an isomorphism $\phi : V \rightarrow W$ such that $\gamma = \beta \circ (\phi \otimes \phi)$.
\end{Def}
It is obvious this forms an equivalence relation, and it is clear that non-degeneracy is preserved by this equivalence relation. We will denote the class of a bilinear form $\beta$ by $[\beta]$. We can also define the sum and product of forms:

\begin{Def}
    Let $\beta: V \otimes V \rightarrow \un$ and $\gamma: W \otimes W \rightarrow \un$ be two bilinear forms. Then, the \textit{sum} $\beta + \gamma: (V \oplus W) \otimes (V \oplus W) \rightarrow \un$ is the bilinear form on $V \oplus W$ given by the composition

    \[(V \oplus W) \otimes (V \oplus W) \cong (V\otimes V) \oplus (V\otimes W) \oplus (W \otimes V) \oplus (W \otimes W) \xrightarrow{\beta \oplus 0 \oplus 0 \oplus \gamma} \un \oplus \un \rightarrow \un, \]
    where the last map is the addition map.
\end{Def}
\begin{Def}
    Let $\beta: V \otimes V \rightarrow \un$ and $\gamma: W \otimes W \rightarrow \un$ be two bilinear forms. Then, the \textit{product} $\beta \times \gamma: (V \otimes W) \otimes (V \otimes W)\rightarrow \un$ is the bilinear form on $V \otimes W$ given by the composition

    \[(V \otimes W) \otimes (V \otimes W) \xrightarrow{1_V \otimes c_{W,V} \otimes 1_W} (V \otimes V) \otimes (W \otimes W) \xrightarrow{\beta \otimes \gamma} \un \otimes \un \xrightarrow{m} \un, \]
    where the last map $m$ is the isomorphism afforded by the unit object.
\end{Def}
It is easy to see that if $\beta$ and $\gamma$ are non-degenerate, then so are $\beta + \gamma$ and $\beta \times \gamma$. It is also clear that the sum of two symmetric forms is symmetric and that the sum of two skew-symmetric forms is skew-symmetric. The isomorphism $\Gamma^2(V \oplus W) \cong \Gamma^2(V) \oplus (V \otimes W) \oplus \Gamma^2(W)$ makes it clear that the sum of two alternating forms is also alternating. We have the following statement about the product of two forms, which is a consequence of the hexagonal axioms that the braiding must satisfy:

\begin{prop}\label{products}
    Let $\beta: V \otimes V \rightarrow \un$ and $\gamma: W \otimes W \rightarrow \un$ be two bilinear forms. If $\beta$ and $\gamma$ are both symmetric or are both skew-symmetric, then $\beta \times \gamma$ is symmetric. If one form is symmetric and the other is skew-symmetric, then $\beta \times \gamma$ is skew-symmetric.
\end{prop}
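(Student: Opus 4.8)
The plan is to isolate a single coherence identity for the braiding, after which all three cases follow by inspecting signs. Write $U := V\otimes W$ and let $\sigma := 1_V\otimes c_{W,V}\otimes 1_W$ denote the ``middle swap'' appearing in the definition of the product, so that $\beta\times\gamma = m\circ(\beta\otimes\gamma)\circ\sigma$. The crux of the argument is the claim that
\[\sigma\circ c_{U,U} = (c_{V,V}\otimes c_{W,W})\circ\sigma,\]
an equality of morphisms $U\otimes U\to(V\otimes V)\otimes(W\otimes W)$; call it $(\star)$. Granting $(\star)$, I would compute
\begin{align*}
(\beta\times\gamma)\circ c_{U,U} &= m\circ(\beta\otimes\gamma)\circ\sigma\circ c_{U,U} \\
&= m\circ(\beta\otimes\gamma)\circ(c_{V,V}\otimes c_{W,W})\circ\sigma \\
&= (\beta\circ c_{V,V})\times(\gamma\circ c_{W,W}),
\end{align*}
where the last equality is simply the definition of the product of the forms $\beta\circ c_{V,V}$ and $\gamma\circ c_{W,W}$.

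To prove $(\star)$ I would expand the cable braiding $c_{U,U}=c_{V\otimes W,\,V\otimes W}$ using the hexagon axioms, writing it as a composite of the elementary braidings $c_{V,V}$, $c_{V,W}$, $c_{W,V}$, $c_{W,W}$ acting on the four tensor factors $V_1,W_1,V_2,W_2$. Tracking the strands shows that both sides of $(\star)$ realize the same permutation $(V_1,W_1,V_2,W_2)\mapsto(V_2,V_1,W_2,W_1)$ of the tensor factors. The only discrepancy between the two composites is that the expansion of $c_{U,U}$ produces a crossing of $V_1$ past $W_2$ (an instance of $c_{V,W}$) which is then undone by the $c_{W,V}$ occurring in the left-hand $\sigma$; since we are in a \emph{symmetric} category we have $c_{W,V}\circ c_{V,W}=1$, so these two crossings cancel and the surviving crossings are exactly the $c_{V,V}$ and $c_{W,W}$ on the right-hand side. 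Equivalently, and more cleanly, both sides are structural isomorphisms inducing the same permutation, so they agree by coherence for symmetric monoidal categories. This verification of $(\star)$ — in particular the bookkeeping of the suppressed associativity constraints and the confirmation that the leftover crossings assemble into $c_{V,V}\otimes c_{W,W}$ in the correct order via naturality and the braid relation — is the only genuine work, and is where I expect the main obstacle to lie.

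With $(\star)$ in hand the three cases are immediate from the identity $(\beta\times\gamma)\circ c_{U,U}=(\beta\circ c_{V,V})\times(\gamma\circ c_{W,W})$ together with the fact that the product is additive and scalar-linear in each argument (scaling $\beta$ or $\gamma$ by a scalar scales $\beta\times\gamma$ by that same scalar). If $\beta$ and $\gamma$ are both symmetric then $\beta\circ c_{V,V}=\beta$ and $\gamma\circ c_{W,W}=\gamma$, giving $(\beta\times\gamma)\circ c_{U,U}=\beta\times\gamma$, so $\beta\times\gamma$ is symmetric. If both are skew-symmetric then $\beta\circ c_{V,V}=-\beta$ and $\gamma\circ c_{W,W}=-\gamma$, and $(-\beta)\times(-\gamma)=\beta\times\gamma$, so again $\beta\times\gamma$ is symmetric. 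Finally, if exactly one is skew-symmetric, say $\beta$ symmetric and $\gamma$ skew-symmetric, then $(\beta\times\gamma)\circ c_{U,U}=\beta\times(-\gamma)=-(\beta\times\gamma)$, so $\beta\times\gamma$ is skew-symmetric, which completes the plan.
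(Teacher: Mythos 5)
Your proposal is correct and follows essentially the same route as the paper: the paper's appendix proof establishes exactly your identity $(\star)$ (carried along inline with $m\circ(\beta\otimes\gamma)$), by expanding $c_{V\otimes W,V\otimes W}$ via the hexagon axioms and cancelling the resulting $c_{V,W}$ crossing against the $c_{W,V}$ in $\sigma$ using $c_{W,V}\circ c_{V,W}=1_{V\otimes W}$, and then concludes with the same sign bookkeeping. Your alternative justification of $(\star)$ by coherence for symmetric monoidal categories (both composites induce the same permutation of the four formally distinct factors) is also valid, just packaged as a standalone lemma rather than computed inline.
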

\begin{proof}
    Deferred to Appendix \S\ref{appendix}.
\end{proof}
We can define the sum and product of the isomorphism classes of two bilinear forms $[\beta]$ and $[\gamma]$ by taking suitable representatives $\beta$ and $\gamma$, taking their sum or product, and then taking the isomorphism class of the result. It is clear that this is well-defined. Now, let $\mathcal{W}(\mathcal{C})$ denote the set of isomorphism classes of non-degenerate symmetric bilinear forms. Our results thus far show that $\mathcal{W}(\mathcal{C})$ is closed under sum and product. We actually can say more:

\begin{prop}\label{witt_semi_ring}
    The set $\mathcal{W}(\mathcal{C})$ equipped with $(+, \times)$ is an associative, commutative semi-ring with additive identity $[0]$ and multiplicative identity $[m]$, where $0$ is the zero form on the zero object. We call this semi-ring the \textit{Witt semi-ring} of $\mathcal{C}$.
\end{prop}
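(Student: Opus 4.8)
The plan is to verify the semi-ring axioms one at a time, drawing on the closure results already established: the sum and product of non-degenerate symmetric forms are again non-degenerate symmetric (the former from the remarks preceding Proposition \ref{products}, the latter from Proposition \ref{products} together with the observation that non-degeneracy is preserved under both operations), so that $\mathcal{W}(\mathcal{C})$ is genuinely closed under $+$ and $\times$. What remains is purely the algebra of the two binary operations, which I would deduce from the coherence axioms of the symmetric monoidal structure rather than by any computation specific to $\Ver_4^+$.

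For the additive structure, I would use that $\oplus$ is a biproduct. The canonical isomorphisms $V \oplus W \cong W \oplus V$ and $(V \oplus W) \oplus U \cong V \oplus (W \oplus U)$ intertwine the summand-wise definition of $\beta + \gamma$ because the final addition map $\un \oplus \un \to \un$ is itself commutative and associative; hence commutativity and associativity of $+$ each reduce to a short diagram chase. The additive identity is $[0]$ on the zero object, since $V \oplus 0 \cong V$ and the cross terms built from $V \otimes 0$ and $0 \otimes V$ vanish, so that $\beta + 0 = \beta$ after the canonical identification.

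For the multiplicative structure, commutativity is the claim $[\beta]\times[\gamma] = [\gamma]\times[\beta]$; I would exhibit the equivalence via $\phi = c_{V,W} : V \otimes W \to W \otimes V$ and verify $\gamma \times \beta = (\beta \times \gamma)\circ(\phi \otimes \phi)$ by a naturality-and-hexagon chase on the interleaving braiding $c_{W,V}$ appearing in the definition of the product. Associativity is analogous but longer: unwinding $(\beta \times \gamma)\times \delta$ and $\beta \times (\gamma \times \delta)$ on $(V \otimes W)\otimes U$ and $V \otimes (W \otimes U)$, and comparing them across the associativity constraint, once again reduces to the hexagon axioms, which govern how the three braidings $c_{W,V}$, $c_{U,V}$, $c_{U,W}$ recombine. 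The multiplicative identity is $[m]$, the form $m : \un \otimes \un \to \un$ on the unit object: tensoring against it and using $V \otimes \un \cong V$ together with the triviality of the braiding with $\un$ returns $\beta$ up to the canonical identification.

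Finally, distributivity $[\beta]\times([\gamma]+[\delta]) = [\beta]\times[\gamma] + [\beta]\times[\delta]$ follows from the distributive isomorphism $V \otimes (W \oplus W') \cong (V \otimes W) \oplus (V \otimes W')$: the product form decomposes along this direct sum precisely into the two summand products, and the two addition maps involved (the one from $+$ and the one from $m$) are compatible. The step I expect to be the main obstacle is the multiplicative associativity, where tracking the three interleaving braidings under the associativity and hexagon constraints is genuinely delicate; as with Proposition \ref{products}, I would isolate this verification and defer the detailed diagram chase to the appendix.
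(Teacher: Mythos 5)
Your proposal is correct, and the one step the paper treats as non-obvious—commutativity of $\times$—is handled exactly as in the paper: exhibit the equivalence via the braiding $\phi = c_{V,W}$ and close the computation with naturality of $c$ plus the hexagon axioms, with all remaining axioms (additive structure, identities, associativity, distributivity) being routine verifications that the paper simply declares obvious. The only difference is cosmetic: the paper rescales to $\phi = \lambda^{1/2}c_{V,W}$, where $c_{\un,\un}$ acts by $\lambda = \pm 1$ via Schur's lemma, to cancel the $c_{\un,\un}$ factor produced by the naturality step, whereas your unnormalized $\phi$ also works because coherence in a symmetric monoidal category forces $m \circ c_{\un,\un} = m$.
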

\begin{proof}
    Deferred to Appendix \S\ref{appendix}.
\end{proof}
Now, let us turn to operations on quadratic forms. 

 \begin{Def}
 Let $q$, $r$ be two quadratic forms on $V$ and $W$, respectively. We say $q$ and $r$ are equivalent if there exists an isomorphism $\phi: V \rightarrow W$ such that the induced map $\Gamma^2(\phi): \Gamma^2(V) \rightarrow \Gamma^2(W)$ satisfies $q = r \circ \Gamma^2(\phi)$.
 \end{Def}
 Again, this defines an equivalence relation on the space of quadratic forms which is stable under restricting to non-degenerate quadratic forms. Let us use $\mathcal{Q}(\mathcal{C})$ to denote the set of isomorphism classes of non-degenerate quadratic forms in $\mathcal{C}$. 

 \begin{Def}
    Given two quadratic forms $q$ on $V$ and $r$ on $W$, their sum $q+r$ on $V \oplus W$ is given by the composition

    \[\Gamma^2(V \oplus W) \cong \Gamma^2(V) \oplus V \otimes W \oplus \Gamma^2(W) \xrightarrow{q \oplus 0 \oplus r} \un \oplus \un \rightarrow \un,\]
    where the last map is addition. 
 \end{Def}

 \begin{Def}
    Given a bilinear form $\gamma$ on $V$ and a quadratic form $q$ on $W$, we can produce a new quadratic form $\gamma.q$ on $V \otimes W$ by the composition

    \[\Gamma^2(V \otimes W) \hookrightarrow V \otimes W \otimes V \otimes W \xrightarrow{1_V \otimes c_{W,V} \otimes 1_W} V \otimes V \otimes W \otimes W \xhookrightarrow{\gamma \otimes \beta_q} \un \otimes \un \xrightarrow{m} \un.\]
 \end{Def}
 It is clear that these operations descend to $\mathcal{Q}(\mathcal{C})$ in the sense that $\mathcal{Q}(\mathcal{C})$ is a semi-module over the semi-ring $\mathcal{W}(\mathcal{C})$. The proof of this is similar to the proofs of Proposition \ref{products} and Proposition \ref{witt_semi_ring} and is omitted. It is also clear that this enables one to define the multiplication of two quadratic forms, but given that there is no unit element in characteristic $2$ for such a multiplication, we do not consider it further. Outside of characteristic $2$ it just coincides with the Witt semi-ring.

 \begin{Question}
    Given an arbitrary symmetric tensor category $\mathcal{C}$, what can we say about the structure of its Witt semi-ring? In characteristic $2$, what can we say about the semi-module structure of $\mathcal{Q}(\mathcal{C})$ over $\mathcal{W}(\mathcal{C})$?
 \end{Question}
A partial answer to this question is easy for any Frobenius-exact symmetric tensor category, as such a category would fiber over the Verlinde category $\Ver_p$. Over $\Ver_p$, because each simple object is self-dual, it is easily seen that every non-degenerate symmetric bilinear form consists of the information of $\tfrac{p-1}{2}$ ordinary non-degenerate symmetric bilinear forms over vector spaces and $\tfrac{p-1}{2}$ ordinary non-degenerate skew-symmetric bilinear forms over vector spaces. This corresponds to the decomposition $\Ver_p = \Ver_p^+ \boxtimes \sVec_\KK$ and the fact that the odd generator $L_{p-1}$ of $\sVec_\KK$ satisfies $S^2(L_{p-1}) = \un$.

\section{Basic Properties of the Verlinde Category \texorpdfstring{$\ver_4^+$}{}}\label{basic_properties_section}
In this section, we define the Verlinde category $\Ver_4^+$ and state its basic properties. Throughout this paper, we define $\KK$ as an algebraically closed field of characteristic $p = 2$. We will also assume a cursory familiarity with the language of Hopf algebras and tensor categories (cf.\@ \cite{etingof2016tensor, etingof2021lectures}) and suppress associativity morphisms in our notation.

\subsection{The Hopf Algebra $\KK[t]/(t^2)$}
The unital algebra $A \coloneqq \KK[t]/(t^2)$ admits the structure of a Hopf algebra with comultiplication $\Delta : A \rightarrow A \otimes A$, counit $\epsilon: A\rightarrow \KK$, and antipode $S: A \rightarrow A$ uniquely determined by 

\begin{align*}
    \Delta(t) = 1 \otimes t + t \otimes 1; \\
    \epsilon(t) = 0; \\
    S(t) = t.
\end{align*}
By the theory of Jordan canonical forms, $A$ has two indecomposable modules up to isomorphism: the trivial representation, denoted $\un$, which is simple, and a two-dimensional module $P$, which is an extension of $\un$ by itself. The Krull-Schmidt theorem tells us that any module $U$ over $A$ is (non-uniquely) isomorphic to $m\un \oplus nP$, with $m$ and $n$ invariants of $U$. We will often fix such a decomposition and let the sets 
\begin{equation}\label{standard_basis}
\begin{aligned}
    \{v_1, v_2, \dots, v_m\} \\
    \{w_1, x_1, \dots, w_n, x_n\}       
\end{aligned}
\end{equation}
denote a basis of $m\un$ and a basis of $nP$, respectively, where $t.v_j = 0$ for all $1 \leq j \leq m$ and $t.w_k = x_k$ for all $1 \leq k \leq n$. Notice in particular that $t.x_k = 0$ as $t^2 = 0$. We have $U = V \oplus W \oplus X$, where $V$ is the span of the vectors $\{v_j\}_{j=1}^m$, $W$ is the span of the vectors $\{w_k\}_{k=1}^n$, and $X$ is the span of the vectors $\{x_k\}_{k=1}^n$. The vector space of morphisms $\Hom_A(M, N)$ between two representations $M,N$ is simply the collection of linear maps that respect the $t$-action, meaning that $t.\phi(\mu) = \phi(t.\mu)$ for all $\mu \in M$ and $\phi \in \Hom_A(M,N)$.

Note that the linear map $\varphi \in \Hom_{\ver_4^+}(U, U)$ given by $\varphi(u)=t.u$ is a morphism in the category $\ver_4^+$ because it commutes with the $t$-action. With respect to the decomposition of $U$ described above, $\im(\varphi)=X$ and $\ker(\varphi)=V\oplus X$. Thus, $X$ and $V\oplus X$ are fixed, while $V$ and $W$ are dependent on a choice of basis because the decomposition of $U$ into $m\un\oplus nP$ is not unique.
\par
Given an $A$-module $U$, there is a (left) dual module $U^*$ with the $t$-action defined by
\[ (t.f)(u) = f(S(t).u) = f(t.u) \]
for all $f \in U^*$. With respect to the basis of $U$ given by \eqref{standard_basis}, $U^*$ has a dual basis given by the union of the following two sets:
\begin{equation}\label{standard_dual_basis}
\begin{aligned}
    \{v_1^*, v_2^*, \dots, v_m^*\} \\
    \{x_1^*, w_1^*, \dots, x_n^*, w_n^*\}.       
\end{aligned}
\end{equation}
Here, $t.v_j^* = 0$ for all $1 \leq j \leq m$, and $t.x_k^* = w_k^*$ for all $1 \leq k \leq n$. Finally, given any two $A$-modules $M$ and $N$, the tensor product $M\otimes N$ admits the structure of an $A$-module via the comultiplication map. It is determined by 
\[t.(\mu \otimes \nu) = (t.\mu)\otimes \nu + \mu \otimes (t.\nu)\]
for all $\mu \in M$ and $\nu \in N$. Explicitly, if two copies of $P$ have a fixed bases $\{w, x\}$ and $\{\omega, \chi\}$, respectively, then their tensor product is $P\otimes P = P\oplus P$. A basis for the first summand is $\{w \otimes \chi, x \otimes \chi\}$, and a basis for the second summand is $\{w\otimes \omega, x \otimes \omega + w \otimes \chi\}$.
\par
We can then define the representation category $\Rep A$ to be the category whose objects are $A$-modules and whose morphisms between two $A$-modules $M,N$ are the maps $\Hom_A(M, N)$. These structures endow $\Rep A$ with the structure of a tensor category. 
\subsection{Triangular Structure on $\KK[t]/(t^2)$ and the Verlinde Category $\Ver_4^+$}
The Hopf algebra $A$ is said to have a \textit{triangular structure} \textit{with} $R$-\textit{matrix} $R$ if there exists an invertible element $R$ in the algebra $A \otimes A$ such that the following identities hold:
\begin{equation}\label{triangular_structure_axioms}
    \begin{aligned}
        (\Delta \otimes 1_A)(R) &= R^{13}R^{23}; \\
        (1_A \otimes \Delta)(R) &= R^{13}R^{12}; \\
        (\sigma_{A,A} \circ \Delta)(a) &= R \Delta(a) R^{-1} \ \ \forall a \in A; \\
        R^{-1} &= R^{21},
    \end{aligned}
\end{equation}

where $\sigma_{X,Y}$ is the permutation of components on $X \otimes Y$. The term $R^{i_1,\dots, i_k}$ is given by permuting $R \otimes 1^{l-2}$ so that the component of $R$ along the $j$-th tensor is now along the $i_j$-th component and where the value of $l$ is determined by the number of tensors on the left-hand side. For example, if $R = a\otimes b  + c \otimes d$ and $l = 3$, then $R^{13} = a \otimes 1 \otimes b + c \otimes 1 \otimes d$. Given a triangular structure on $A$, we can endow $\Rep A$ with a symmetric structure to construct the symmetric tensor category $\Rep (A, R)$. We define the braiding $c$, a natural transformation between the bifunctors $- \otimes - : \Rep A \times \Rep A \rightarrow \Rep A$ and $\sigma_{\_,\_} \circ (-\otimes -): \Rep A \times \Rep A \rightarrow \Rep A$, by  
\[c_{V,W}(v\otimes w) = \sigma_{V,W}(R.(v \otimes w)) \]
for all $V,W \in \Rep A$ and $v \in V, w \in W$. In the case $R = 1 \otimes 1$, we recover the usual symmetric structure on the category $\Rep A$. 
\begin{lem}\label{triangular_structure}
There is a triangular structure on $A$ with $R$-matrix given by $R = 1 \otimes 1 + t \otimes t$.
\end{lem}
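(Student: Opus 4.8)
The plan is to verify the four defining identities in \eqref{triangular_structure_axioms} by direct computation, exploiting that $A$ is a $2$-dimensional algebra over a field of characteristic $2$ with the single relation $t^2 = 0$. The first thing I would record is that $R^2 = (1\otimes 1 + t\otimes t)^2 = 1\otimes 1 + 2(t\otimes t) + t^2\otimes t^2 = 1\otimes 1$, since the middle term vanishes in characteristic $2$ and $t^2 = 0$ kills the last. Hence $R$ is invertible with $R^{-1} = R$. Moreover $R$ is visibly fixed by the swap $\sigma_{A,A}$ (both summands are symmetric under it), so $R^{21} = R = R^{-1}$, which is exactly the fourth axiom.

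Next I would dispatch the third axiom, which is the only identity involving a variable $a \in A$; since both sides are $\KK$-linear in $a$, it suffices to check it on the basis $\{1, t\}$. The case $a = 1$ is trivial because $\Delta(1) = 1\otimes 1$ is central. For $a = t$, I would note that $A$ is cocommutative here, so $\sigma_{A,A}(\Delta(t)) = \Delta(t)$, and the identity reduces to the assertion that $R$ commutes with $\Delta(t) = 1\otimes t + t\otimes 1$. Expanding $R\,\Delta(t)$ and $\Delta(t)\,R$, every cross-term carries a factor $t^2 = 0$, so both products collapse to $\Delta(t)$; hence $R\,\Delta(t)\,R^{-1} = \Delta(t) = \sigma_{A,A}(\Delta(t))$, as required.

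Finally, for the first two (hexagon-type) axioms I would expand each side explicitly as an element of $A^{\otimes 3}$. On the left, $(\Delta\otimes 1_A)(R) = 1\otimes 1\otimes 1 + \Delta(t)\otimes t$ and $(1_A\otimes\Delta)(R) = 1\otimes 1\otimes 1 + t\otimes\Delta(t)$. On the right, I would write out $R^{13}, R^{23}, R^{12}$ following the leg-numbering convention in the text and multiply them out; in each product the unique quadratic cross-term produces a factor of $t^2$ in one slot and hence vanishes, leaving precisely the three surviving terms on the left. This matches both identities.

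The computations are entirely routine and there is no genuine conceptual obstacle. The only points demanding care are the bookkeeping of the leg-numbering convention for $R^{i_1\cdots i_k}$ as defined above, and consistently applying the two simplifications $t^2 = 0$ and $2 = 0$ so that all quadratic cross-terms drop out. I expect the verification of axioms one and two to be the most error-prone, simply because they live in $A^{\otimes 3}$, but no individual step is difficult.
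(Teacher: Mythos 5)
Your proof is correct and takes essentially the same approach as the paper's: direct verification of the axioms in \eqref{triangular_structure_axioms}, using $t^2=0$ and characteristic $2$ to kill all quadratic cross-terms. In fact your verification is more complete than the paper's own, which only checks invertibility ($R^2 = 1\otimes 1$) and the identity $(\Delta\otimes 1_A)(R)=R^{13}R^{23}$ explicitly, deferring the rest as straightforward; your treatment of the conjugation axiom (reducing by linearity to the basis $\{1,t\}$ and using cocommutativity plus $R\,\Delta(t)=\Delta(t)$) and of $R^{-1}=R^{21}$ fills in exactly those omitted checks correctly.
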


\begin{proof}
    Deferred to Appendix \S\ref{appendix}.
\end{proof}
Therefore, we have the following definition:

\begin{Def}
    The \textit{Verlinde category} $\Ver_4^+$ is the representation category $\Rep (A, R)$, where $A = \KK[t]/(t^2)$ and $R = 1\otimes 1 + t \otimes t$ is the $R$-matrix  imposing the triangular structure on $A$.
\end{Def}
The braiding $c$ is explicitly given by 
\[c_{V,W}(v\otimes w) = w \otimes v + (t.w)\otimes (t.v)\]
for all $V,W \in \Rep A$ and $v \in V, w \in W$. It is shown in \cite{venkatesh_hilbert_basis} that $\Ver_4^+$ does not fiber over the category of vector spaces \footnote{There is no category of supervector spaces in characteristic $2$, but in loc.\@ cit.\@, it is suggested that $\Ver_4^+$ could be viewed as the analog in characteristic $2$.}. For more information on triangular Hopf algebras, see \cite[\S8.3]{etingof2016tensor}. Because the underlying tensor category of $\Ver_4^+$ is $\Rep A$, we can and will use the language of vector spaces to describe objects and maps.

\subsection{Properties about Forms in $\Ver_4^+$}
For the remainder of this paper we will work with $\mathcal{C} = \Ver_4^+$. From now on, we will freely identify a bilinear form $\beta$ on an object $U$ with the corresponding bilinear map $U \times U \rightarrow \un$, so we sometimes write $\beta(u, u')$ instead of $\beta(u \otimes u')$. Let us now try to understand forms in this category a bit better. First of all, a bilinear form $\beta: U \otimes U \rightarrow \un$ must satisfy
\begin{align*}
    0 &= t.(\beta(u \otimes u')) = \beta(t.(u \otimes u'))\\
    &= \beta((t.u)\otimes u' + u \otimes (t.u')) \\
    &\implies \beta(t.u \otimes u') = \beta(u \otimes t.u')
\end{align*}
for all $u, u' \in U$, because $\beta$ is also an $A$-module homomorphism. 
\par
Our first major property about symmetric bilinear forms in $\Ver_4^+$ is that they reduce to symmetric bilinear forms in the underlying category $\Rep A$:

\begin{lem}\label{symcondition}
Let $\beta: U \otimes U \rightarrow \un$ be a bilinear form in $\Ver_4^+$. Then, $\beta$ is symmetric if and only if  $\beta(u\otimes u') = \beta(u'\otimes u)$ for all $u, u' \in U$.
\end{lem}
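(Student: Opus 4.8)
The plan is to unwind the categorical definition of symmetry, namely $\beta = \beta \circ c_{U,U}$, into the explicit vector-space language afforded by the braiding formula for $\Ver_4^+$, and then to observe that the resulting condition collapses to the naive one because the braiding's ``correction term'' vanishes identically. Using
\[c_{U,U}(u \otimes u') = u' \otimes u + (t.u') \otimes (t.u),\]
the condition $\beta = \beta \circ c_{U,U}$ reads
\[\beta(u \otimes u') = \beta(u' \otimes u) + \beta\big((t.u') \otimes (t.u)\big)\]
for all $u, u' \in U$, where I have used that we are in characteristic $2$ so that the sign is immaterial. The naive symmetry condition $\beta(u \otimes u') = \beta(u' \otimes u)$ is exactly this identity with the second summand on the right dropped, so the entire statement reduces to controlling that summand.

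The crux, then, is to show that $\beta\big((t.u') \otimes (t.u)\big)$ is always zero, independently of any symmetry hypothesis. This follows from the module-homomorphism identity $\beta\big((t.u) \otimes u'\big) = \beta\big(u \otimes (t.u')\big)$ derived immediately above the statement (a consequence of $\beta$ being an $A$-module map into the trivial module $\un$). Applying it with the two arguments taken to be $u'$ and $t.u$ gives
\[\beta\big((t.u') \otimes (t.u)\big) = \beta\big(u' \otimes t.(t.u)\big) = \beta\big(u' \otimes (t^2.u)\big) = 0,\]
since $t^2 = 0$ in $A = \KK[t]/(t^2)$.

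With the correction term eliminated, the displayed categorical condition becomes precisely $\beta(u \otimes u') = \beta(u' \otimes u)$, which establishes both implications simultaneously: categorical symmetry forces naive symmetry and conversely. I do not expect a genuine obstacle here; the only point requiring attention is recognizing that the extra braiding term is identically zero, and this is immediate once one combines the module-homomorphism relation with the nilpotency $t^2 = 0$. It is worth flagging that this is exactly the phenomenon the introduction alludes to, whereby symmetric forms in $\Ver_4^+$ ``reduce'' to ordinary symmetric forms on the underlying space, even though the braiding differs from the standard flip.
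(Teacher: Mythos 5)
Your proof is correct and is essentially the paper's own argument: both unwind $\beta = \beta \circ c_{U,U}$ using the explicit braiding formula and then kill the correction term via $\beta\big((t.u') \otimes (t.u)\big) = \beta\big(u' \otimes (t^2.u)\big) = 0$. Your framing that the correction term vanishes unconditionally (so both implications follow at once) is a marginally cleaner way of stating what the paper phrases as ``running the steps backwards,'' but it is the same proof.
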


\begin{proof}
Suppose $\beta$ is symmetric. Then,
\begin{align*}
    \beta(u \otimes u') &= \beta(u' \otimes u) + \beta((t.u') \otimes (t.u)) \\
    &= \beta(u' \otimes u) + \beta(u' \otimes (t^2.u)) \\
    &= \beta(u' \otimes u).
\end{align*}
The reverse direction follows by running these steps backwards.
\end{proof}
Therefore, we can think of classification of non-degenerate symmetric bilinear forms in $\Ver_4^+$ as follows: we view the form $\beta$ as a symmetric bilinear form on an ordinary vector space $U$ and then we classify the nilpotent orbits in the Lie algebra of the orthogonal group preserved by $\beta$ satisfying $t^2 = 0$. A similar problem has been studied in \cite{xue2009nilpotent}. It will also be useful to have an explicit description of the second-divided power of an arbitrary object in $\Ver_4^+$. 

\begin{prop}\label{gamma2_basis_prop}
    Let $U$ be an object in $\Ver_4^+$ with a decomposition $U = m\un \oplus nP = V \oplus W \oplus X$ as in \ref{standard_basis}. Then, a decomposition of $\Gamma^2(U)$ into indecomposables is given by 
    
    \begin{equation}\label{gamma2_basis}
    \begin{aligned}
        v_{i'} \otimes v_{i'} &\mapsto 0 \\
        v_i \otimes v_j + v_j \otimes v_i &\mapsto 0 \\
        v_{i'} \otimes w_{k'} + w_{k'} \otimes v_{i'} \mapsto  v_{i'} \otimes x_{k'} + x_{k'} \otimes v_{i'} &\mapsto 0 \\
        x_{k'} \otimes x_{k'} &\mapsto 0 \\
        w_{k'} \otimes x_{k'} + x_{k'} \otimes w_{k'} &\mapsto 0 \\
        w_k \otimes x_l + x_l \otimes w_k \mapsto x_k \otimes x_l + x_l \otimes x_k &\mapsto 0 \\
        w_k \otimes w_l + w_l \otimes w_k + x_k \otimes x_l \mapsto x_k \otimes w_l + w_k \otimes x_l + x_l \otimes w_k + w_l \otimes x_k &\mapsto 0.
    \end{aligned}
    \end{equation} 
    where each line denotes an indecomposable spanned by the given vector(s) and where the $\mapsto$ symbol denotes the image of a basis vector under the action of $t$. The variable ranges are given by $1 \leq i' \leq m$, $1 \leq i < j \leq m$, $1 \leq k' \leq n$, and $1 \leq k < l \leq n$.
\end{prop}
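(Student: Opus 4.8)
My plan is to avoid computing $\ker(1-c)$ on all of $U \otimes U$ at once, and instead reduce to the indecomposable constituents of $U$ and their pairwise tensor products, using the direct-sum decomposition of the divided power recalled above. Writing $U = \un_1 \oplus \cdots \oplus \un_m \oplus P_1 \oplus \cdots \oplus P_n$ as a sum of indecomposables (with $\un_{i'}$ spanned by $v_{i'}$ and $P_{k'}$ by $\{w_{k'}, x_{k'}\}$), iterating $\Gamma^2(V \oplus W) \cong \Gamma^2(V) \oplus (V \otimes W) \oplus \Gamma^2(W)$ and ordering the $\un$-summands before the $P$-summands gives
\[
\Gamma^2(U) \cong \bigoplus_{i'} \Gamma^2(\un_{i'}) \oplus \bigoplus_{i<j} \un_i \otimes \un_j \oplus \bigoplus_{i', k'} \un_{i'} \otimes P_{k'} \oplus \bigoplus_{k'} \Gamma^2(P_{k'}) \oplus \bigoplus_{k<l} P_k \otimes P_l.
\]
The index ranges here match those in the statement exactly, so it suffices to realize each of these five summand-types explicitly inside $U \otimes U$; two of them will split further, producing the seven lines of \eqref{gamma2_basis}.

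First I would dispose of the two diagonal pieces by directly computing $\ker(1-c)$. On $\un \otimes \un$ the braiding is the identity, so $\Gamma^2(\un) = \un$ is spanned by $v_{i'} \otimes v_{i'}$, giving line 1. On $P_{k'} \otimes P_{k'}$, using $c(w_{k'} \otimes w_{k'}) = w_{k'} \otimes w_{k'} + x_{k'} \otimes x_{k'}$, I would check that $\ker(1-c)$ is two-dimensional, spanned by $x_{k'} \otimes x_{k'}$ and $w_{k'} \otimes x_{k'} + x_{k'} \otimes w_{k'}$, both annihilated by $t$; these are the two trivial summands of lines 4 and 5.

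The crux is the cross pieces $M_i \otimes M_j$. The key point is that $M_i \otimes M_j$ embeds into $\Gamma^2(U) = \ker(1-c)$ through the $A$-module map $\eta \mapsto \eta + c(\eta)$: since $c$ is an involution, $(1-c)(\eta + c(\eta)) = \eta - c^2(\eta) = 0$, and this realizes $M_i \otimes M_j$ as the $S_2$-invariants of $M_i \otimes M_j \oplus M_j \otimes M_i$. Because $1 + c$ commutes with the $t$-action, applying it to chosen module generators of $M_i \otimes M_j$ reproduces both the generators listed and the stated $t$-images. Carrying this out, $v_i \otimes v_j \mapsto v_i \otimes v_j + v_j \otimes v_i$ (line 2); $v_{i'} \otimes w_{k'} \mapsto v_{i'} \otimes w_{k'} + w_{k'} \otimes v_{i'}$, whose $t$-image is $v_{i'} \otimes x_{k'} + x_{k'} \otimes v_{i'}$ (line 3, a copy of $P$ since $\un \otimes P \cong P$); and for $P_k \otimes P_l \cong P \oplus P$ the generators $w_k \otimes x_l$ and $w_k \otimes w_l$ map to the tops of lines 6 and 7. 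The hard part will be line 7: here $c(w_k \otimes w_l) = w_l \otimes w_k + x_l \otimes x_k$, so the apparently mysterious correction term $x_k \otimes x_l$ is forced by the non-trivial braiding, and I would verify by a direct application of $t$ that the socle is as stated.

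To finish, I would note that the decomposition above already exhibits these summands as all of $\Gamma^2(U)$, so no separate spanning argument is needed; linear independence of the listed generators is immediate since each carries a distinct leading monomial (in the $w$'s or in the off-diagonal nilpotent tensors). As a numerical check one can confirm $\dim \Gamma^2(U) = \tfrac{m^2+m}{2} + 2mn + 2n^2$, either by summing the five pieces or from $\dim \Gamma^2(U) = (m+2n)^2 - \operatorname{rank}(1-c)$. The main obstacle throughout is the $P_k \otimes P_l$ term: one must handle the braiding on the non-nilpotent vectors $w_k$ carefully, use the splitting $P \otimes P \cong P \oplus P$ (in place of an ordinary symmetric square), and track the extra $x \otimes x$ terms it injects into the generators of lines 6 and 7.
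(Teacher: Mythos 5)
Your proof is correct, but it is organized differently from the paper's. The paper's proof is a two-sentence sketch: it asserts the computation is ``basically identical'' to finding a basis of $\Gamma^2$ of an ordinary vector space, i.e.\ a direct computation of $\ker(1-c_{U,U})$ in the monomial basis, with the braiding intervening only on the vectors $w_{k'}\otimes w_{k'}$ and $w_k\otimes w_l + w_l\otimes w_k$, which produces the last line of \eqref{gamma2_basis}. You instead iterate the splitting $\Gamma^2(V\oplus W)\cong \Gamma^2(V)\oplus (V\otimes W)\oplus \Gamma^2(W)$ to reduce to the indecomposable constituents of $U$, compute $\Gamma^2(\un)$ and $\Gamma^2(P)$ by hand, and realize each cross term $M_i\otimes M_j$ inside $\ker(1-c)$ via the norm map $1+c$ (injective there because $M_i\otimes M_j\cap M_j\otimes M_i=0$, and an $A$-module map, so it transports module generators and their $t$-images). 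This buys a self-contained argument that isolates the braiding correction to the single computation $c(w_k\otimes w_l)=w_l\otimes w_k+x_l\otimes x_k$, and it makes the spanning/independence step automatic, whereas the paper defers that bookkeeping to the reader's familiarity with the vector-space case. One detail to tighten: your generator for line 7 is $(1+c)(w_k\otimes w_l)=w_k\otimes w_l+w_l\otimes w_k+x_l\otimes x_k$, whose correction term is $x_l\otimes x_k$ rather than the $x_k\otimes x_l$ appearing in the statement; either observe that the two candidate generators differ by $x_k\otimes x_l+x_l\otimes x_k$, which is the socle of line 6, so both choices yield valid decompositions into indecomposables, or simply apply $1+c$ to $w_l\otimes w_k$ instead, which produces exactly the listed vector (this is also the choice implicit in the paper's Corollary \ref{frobtwist0}). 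Your dimension check $\tfrac{m^2+m}{2}+2mn+2n^2$ agrees with summing the listed pieces, which is a useful sanity check the paper omits.
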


\begin{proof}
    The proof is basically identical to how one goes about finding a basis for $\Gamma^2(Z)$ for an ordinary vector space $Z$, except there is a subtlety that arises when dealing with the vectors of the form $w_{k'} \otimes w_{k'}$ and $w_k \otimes w_l + w_l \otimes w_k$, which is the only instance where the braiding on $\Ver_4^+$ differs from the usual braiding on $\Rep A$. The modification corresponds to the last line of \eqref{gamma2_basis}.
\end{proof}
Notice now that any quadratic form $q$ is determined by its values on the left-most vector of each line of \eqref{gamma2_basis}. We have the following corollary.

\begin{cor}\label{frobtwist0}
    The Frobenius twist of $nP$ is $0$, which means that there is a bijection between (non-degenerate) symmetric bilinear forms on $nP$ and (non-degenerate) quadratic forms on $nP$.
\end{cor}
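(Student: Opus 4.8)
The plan is to first establish that $(nP)^{(1)} = 0$ and then read off the bijection formally. By definition $(nP)^{(1)}$ is the image of the composite $\Gamma^2(nP) \hookrightarrow nP \otimes nP \twoheadrightarrow S^2(nP)$, and $A^2(nP)$ is its kernel. Since $(1 - c_{nP,nP})^2 = 0$ in characteristic $2$, Proposition \ref{char2_exterior_powers_same} gives $A^2(nP) = \im(1 - c_{nP,nP}) \subseteq \Gamma^2(nP)$, so $(nP)^{(1)} \cong \Gamma^2(nP)/A^2(nP)$. Hence it suffices to prove the reverse inclusion $\Gamma^2(nP) \subseteq \im(1 - c_{nP,nP})$, i.e.\ that every generator listed in Proposition \ref{gamma2_basis_prop} (with $m = 0$) lies in the image of $1 - c_{nP,nP}$.

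First I would compute $1 - c_{nP,nP}$ on small tensors using $c(u \otimes u') = u' \otimes u + (t.u') \otimes (t.u)$ together with $t.w_k = x_k$ and $t.x_k = 0$. The crucial computation is $(1 - c)(w_k \otimes w_k) = x_k \otimes x_k$, which places the ``diagonal'' generators $x_k \otimes x_k$ of line four in the image; the remaining single-tensor formulas $(1-c)(w_{k'} \otimes x_{k'}) = w_{k'} \otimes x_{k'} + x_{k'} \otimes w_{k'}$, and for $k < l$ the analogous expressions for $w_k \otimes x_l$, $x_k \otimes w_l$, and $x_k \otimes x_l$, directly realize the generators on lines five and six and the second generator on line seven.

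The one generator that is not immediately a single $(1-c)$-image is the first generator on line seven, $w_k \otimes w_l + w_l \otimes w_k + x_k \otimes x_l$. Here $(1-c)(w_k \otimes w_l) = w_k \otimes w_l + w_l \otimes w_k + x_l \otimes x_k$, which differs from the target by $x_k \otimes x_l + x_l \otimes x_k = (1-c)(x_k \otimes x_l)$; adding this correction shows the generator equals $(1-c)(w_k \otimes w_l + x_k \otimes x_l)$. This is exactly the place where the $\Ver_4^+$-braiding departs from the naive flip, so it is the step to handle with care. With all generators accounted for, $\Gamma^2(nP) = A^2(nP)$ and therefore $(nP)^{(1)} = 0$.

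Finally, for the bijection, I would feed $(nP)^{(1)} = 0$ into the two extensions recorded just before Proposition \ref{char2_exterior_powers_same}: they collapse to $\Gamma^2(nP) = A^2(nP)$ and $S^2(nP) = \mathbf{A}^2(nP)$. Applying $\Hom(-,\un)$ and the canonical isomorphism $A^2(nP) \cong \mathbf{A}^2(nP)$ of Proposition \ref{char2_exterior_powers_same} identifies quadratic forms $\Hom(\Gamma^2(nP),\un)$ with symmetric bilinear forms $\Hom(S^2(nP),\un)$; unwinding the definitions, this identification is precisely the assignment $q \mapsto \beta_q$, which is therefore a bijection. Because $q$ is non-degenerate exactly when $\beta_q$ is, the bijection restricts to the non-degenerate forms on both sides. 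I expect no conceptual obstacle here: the only genuine labor is the bookkeeping in the generator check, with the line-seven correction being the single braiding-sensitive step.
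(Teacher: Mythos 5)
Your proposal is correct and follows essentially the same route as the paper: verify that every generator of $\Gamma^2(nP)$ listed in Proposition \ref{gamma2_basis_prop} lies in $\im(1 - c_{nP,nP})$, conclude that the Frobenius twist vanishes, and then use $A^2(nP) \cong \mathbf{A}^2(nP)$ to identify $\Gamma^2(nP) \cong S^2(nP)$ and hence quadratic forms with symmetric bilinear forms (with non-degeneracy preserved by the paper's definition). The only cosmetic difference is at the line-seven generator, which the paper realizes in a single step as $(1-c)(w_l \otimes w_k)$ whereas you add the correction $(1-c)(x_k \otimes x_l)$ to $(1-c)(w_k \otimes w_l)$; both computations are valid.
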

\begin{proof}
    The Frobenius twist of $nP$ is by definition the image of the canonical map $\Gamma^2(nP) \twoheadrightarrow S^2(nP)$. The last four lines of \eqref{gamma2_basis} are the basis vectors for $\Gamma^2(nP)$. It is clear that all of these lie in the image of $1-c_{V,V}$ (in particular, $x_{k'} \otimes x_{k'}$ is the image of $w_{k'} \otimes w_{k'}$ and $w_k \otimes w_l + w_l \otimes w_k + x_k \otimes x_l$ is the image of $w_l \otimes w_k$). The discussion preceding Proposition \ref{char2_exterior_powers_same} then makes it clear that $\Gamma^2(nP) \cong S^2(nP)$, which gives us a way to identify symmetric bilinear forms on $nP$ with quadratic forms on $nP$.
\end{proof}

We can also identify the additional criteria that symmetric bilinear forms must satisfy to be alternating.

\begin{prop}\label{alternatingfirstdef}\label{alternatingmaindef}
    Let $\beta: U \otimes U \rightarrow \un$ be a symmetric bilinear form in $\Ver_4^+$. Then, $\beta$ is alternating if and only if $\beta(u \otimes u) = 0$ for all $u \in \ker t$. 
\end{prop}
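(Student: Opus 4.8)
The plan is to unwind the definition of \emph{alternating}---namely $\beta|_{\Gamma^2(U)} = 0$---against the explicit basis for $\Gamma^2(U)$ furnished by Proposition~\ref{gamma2_basis_prop}, exploiting two cost-free vanishing facts that hold for any symmetric $\beta$. First, whenever $u \in \ker t$ one computes $(1_{U\otimes U} - c_{U,U})(u \otimes u) = (t.u)\otimes(t.u) = 0$, so $u \otimes u \in \ker(1_{U\otimes U} - c_{U,U}) = \Gamma^2(U)$; this already yields the forward implication, since an alternating form annihilates all of $\Gamma^2(U)$. Second, because $\beta$ is symmetric we have $\beta = \beta \circ c_{U,U}$, so $\beta$ vanishes on $A^2(U) = \im(1_{U\otimes U} - c_{U,U})$ (Proposition~\ref{char2_exterior_powers_same}); concretely, on any generator of the shape $a\otimes b + b\otimes a$ the value is $2\beta(a\otimes b) = 0$ in characteristic $2$. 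Morally, this says $\beta|_{\Gamma^2(U)}$ only ``sees'' the Frobenius-twist quotient $\Gamma^2(U)/A^2(U)$.

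For the reverse implication I would verify that $\beta$ kills each of the seven families of generators in \eqref{gamma2_basis}. The symmetric-plus-characteristic-$2$ observation disposes of lines two, three, five, and six, which are all symmetrizations. Lines one and four, $\beta(v_{i'}\otimes v_{i'})$ and $\beta(x_{k'}\otimes x_{k'})$, are of the form $\beta(u\otimes u)$ with $u \in \ker t$ and hence vanish by hypothesis. The only genuinely remaining generator is the left-most vector of line seven, on which $\beta$ evaluates to $\beta(w_k\otimes w_l) + \beta(w_l\otimes w_k) + \beta(x_k\otimes x_l)$; symmetry collapses the first two terms, leaving $\beta(x_k\otimes x_l)$.

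The crux is therefore to show $\beta(x_k\otimes x_l) = 0$, and here polarization is useless: symmetry gives only $2\beta(x_k\otimes x_l) = 0$, which is vacuous in characteristic $2$. Instead I would invoke that $\beta$ is a morphism in $\Ver_4^+$, so $\beta(t.u \otimes u') = \beta(u \otimes t.u')$. Writing $x_k = t.w_k$ and using $t.x_l = 0$, one gets $\beta(x_k\otimes x_l) = \beta(t.w_k \otimes x_l) = \beta(w_k \otimes t.x_l) = 0$; the same identity with $l = k$ in fact makes line four vanish unconditionally. This exhausts all generators, so $\beta|_{\Gamma^2(U)} = 0$.

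Finally I would record the bookkeeping that reconciles the two phrasings: expanding $\beta(u\otimes u)$ for $u = \sum_i a_i v_i + \sum_k b_k x_k \in \ker t$, symmetry kills every cross term (each carries a coefficient $2$) and $\beta(x_k\otimes x_k)=0$ by the morphism identity, leaving $\beta(u\otimes u) = \sum_i a_i^2\,\beta(v_i\otimes v_i)$; specializing $u = v_i$ shows that ``$\beta(u\otimes u)=0$ on $\ker t$'' is equivalent to ``$\beta(v_i\otimes v_i)=0$ for all $i$,'' which is precisely the one nontrivial generator condition. I expect the main obstacle to be exactly the line-seven term $\beta(x_k\otimes x_l)$: recognizing that the symmetric structure alone cannot force its vanishing, and that the module-homomorphism property is the correct and essential tool, while every other case is routine.
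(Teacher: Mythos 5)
Your proof is correct and takes essentially the same approach as the paper: both verify $\beta$ against the generators of $\Gamma^2(U)$ from Proposition~\ref{gamma2_basis_prop}, kill the symmetrization-type generators by symmetry, and resolve the single nontrivial generator of line seven via the module-morphism property, your computation $\beta(x_k \otimes x_l) = \beta(t.w_k \otimes x_l) = \beta(w_k \otimes t.x_l) = 0$ being exactly the paper's $\beta(x_k \otimes x_l) = \beta(t.(w_k \otimes x_l)) = t.\beta(w_k \otimes x_l) = 0$. Your forward direction is in fact slightly more explicit than the paper's (directly exhibiting $u \otimes u \in \ker(1_{U\otimes U} - c_{U,U})$ for $u \in \ker t$ rather than deferring to the vector-space argument), but the content is identical.
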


\begin{proof}
    Recall that if we fix a decomposition $U = V \oplus W \oplus X$ as in \ref{standard_basis}, then $\ker t = V \oplus X$. Now, suppose that $\beta$ is an alternating form. Then, it is clear by Proposition \ref{gamma2_basis_prop} that $\beta(u \otimes u) = 0$ for all $u \in \ker t$ (the same proof for vector spaces goes through). 
    \par
    On the other hand, suppose that $\beta(u \otimes u) = 0$ for all $u \in \ker t$. It is clear that each basis vector in \eqref{gamma2_basis} is killed by $\beta$ (recall $\beta$ is also symmetric by assumption), except vectors of the form $w_k \otimes w_l + w_l \otimes w_k + x_k \otimes x_l$. But these also map to zero because $\beta(w_k \otimes w_l + w_l \otimes w_k) = 0$ by symmetry and $\beta(x_k \otimes x_l) = 0$ because $\beta(x_k \otimes x_l) = \beta(t.(w_k \otimes x_l)) = t.\beta(w_k \otimes x_l) = 0$ as $t$ acts trivially on $\un$.
\end{proof}
Note that any symmetric bilinear form on $nP$ is alternating as in this case $V = 0$ and $\beta|_{X\otimes X} = 0$ (see Lemma \ref{bilform_calcs}). The following terminology will be useful later on:

\begin{Def}
    Let $\beta$ be a symmetric bilinear form on $U$. Call $\beta$ \textit{super-alternating} if $\beta(u \otimes u) = 0$ for all $u \in U$.
\end{Def}
Clearly any super-alternating form is alternating, and the two notions coincide when working over vector spaces.
\par
As in the ordinary vector space setting, decomposing a bilinear form into the sum of smaller forms by way of orthogonal complements will be a key idea. If $\beta$ is a bilinear form on $U$ and $S$ is a subobject of $U$, we define the \textit{orthogonal complement} $S^{\perp}$ of $S$ (in $U$ and with respect to $\beta$) to be
\[S^\perp \coloneqq \ker(U \xrightarrow{\beta'} U^* \xrightarrow{\pi} S^*),\]
where the map $\pi$ is the usual projection map. The following proposition is useful:

\begin{prop}\label{nondegenperp}
    Let $\beta$ be a non-degenerate symmetric bilinear form on $U \in \Ver_4^+$, and let $S$ be a subobject of $U$. If the restriction of $\beta$ to $S$ is non-degenerate, then $U = S \oplus S^\perp$, and moreover, the restriction of $U$ to $S^{\perp}$ is also non-degenerate.
\end{prop}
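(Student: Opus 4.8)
The plan is to run the classical orthogonal-decomposition argument, but entirely at the level of morphisms in the abelian category $\Ver_4^+ = \Rep A$, so that the resulting splitting is a splitting of $A$-modules and not merely of the underlying vector spaces. Write $\iota \colon S \hookrightarrow U$ for the inclusion and $\pi = \iota^* \colon U^* \to S^*$ for its dual, so that $S^\perp = \ker(\phi)$ where $\phi \coloneqq \pi \circ \beta' \colon U \to S^*$. The first step is to record the naturality identity $(\beta|_S)' = \pi \circ \beta' \circ \iota = \phi \circ \iota$, which holds because the restriction $\beta \circ (\iota \otimes \iota)$ corresponds under tensor-hom adjunction precisely to $\iota^* \circ \beta' \circ \iota$. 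By hypothesis $\beta|_S$ is non-degenerate, so $g \coloneqq (\beta|_S)' = \phi \circ \iota$ is an isomorphism $S \isomto S^*$.

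Next I would build the projector witnessing the splitting. Set $p \coloneqq \iota \circ g^{-1} \circ \phi \in \End_{\Ver_4^+}(U)$. A direct computation using $\phi \circ \iota = g$ gives $p^2 = p$, so $p$ is an idempotent morphism; since $\Rep A$ is abelian, hence idempotent-complete, we obtain $U = \im p \oplus \ker p$. One then checks $p \circ \iota = \iota$, whence $\im p = S$, and $p u = 0 \iff \phi u = 0$ (since $\iota \circ g^{-1}$ is a monomorphism), whence $\ker p = \ker \phi = S^\perp$. This yields the desired decomposition $U = S \oplus S^\perp$ as objects of $\Ver_4^+$.

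For the final claim, I would first observe that $S$ and $S^\perp$ are mutually orthogonal with respect to $\beta$: for $u' \in S^\perp$ the element $\beta'(u') \in U^*$ restricts to $0$ on $S$, i.e.\ $\beta(u' \otimes s) = 0$ for all $s \in S$, and symmetry (Lemma \ref{symcondition}) gives $\beta(s \otimes u') = 0$ as well. Using the induced identification $U^* \cong S^* \oplus (S^\perp)^*$, this orthogonality forces the off-diagonal blocks of $\beta'$ to vanish, so that $\beta' = (\beta|_S)' \oplus (\beta|_{S^\perp})'$ is block diagonal. Since $\beta'$ is an isomorphism and $(\beta|_S)'$ is an isomorphism, the remaining diagonal block $(\beta|_{S^\perp})'$ must be an isomorphism too, which is exactly the non-degeneracy of $\beta|_{S^\perp}$.

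The idempotent bookkeeping in the second paragraph is routine; the genuinely categorical inputs, and the steps I would be most careful with, are the adjunction naturality identity $(\beta|_S)' = \iota^* \circ \beta' \circ \iota$ and the compatibility of the dual splitting $U^* \cong S^* \oplus (S^\perp)^*$ with the projections, since these are precisely what guarantee that the argument produces a decomposition of $A$-modules rather than only of the underlying spaces.
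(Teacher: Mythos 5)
Your proof is correct and is essentially the same argument the paper relies on: the paper's proof just cites the classical orthogonal-projection argument (\cite[Theorem 3.12]{Conrad2008BILINEARF}) and asserts it extends, and your idempotent $p = \iota \circ g^{-1} \circ \phi$ is precisely that classical projection written at morphism level, with the block-diagonality of $\beta'$ giving non-degeneracy on $S^\perp$. The only difference is that you explicitly carry out the verification the paper leaves implicit --- that every step is a morphism in $\Rep A$, so the splitting $U = S \oplus S^\perp$ is a decomposition of objects in $\Ver_4^+$ rather than merely of underlying vector spaces.
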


\begin{proof}
    The proofs in the classical setting extend to our setting (\cite[Theorem 3.12]{Conrad2008BILINEARF}).
\end{proof}



\section{Classification of Non-Degenerate Forms in \texorpdfstring{$\ver_4^+$}{}}\label{classification}
We have now set the stage to classify the isomorphism classes of non-degenerate symmetric bilinear forms and non-degenerate quadratic in $\Ver_4^+$. From now on, whenever we say two bilinear forms are equal to each other, we always mean up to isomorphism. 

\subsection{Classifying forms on objects of the form $m\un$ and of the form $nP$}
Before we can approach the general case, it is easier to classify forms on objects of the form $m\un$ and on objects of the form $nP$. The former is the well-known classification of symmetric bilinear forms in the ordinary vector space setting:

\begin{thm}[\cite{glasser05}]\label{vforms}
    Let $\beta$ be a non-degenerate symmetric bilinear form on a vector space $Z$. Then, there exists a basis for $Z$ in which the associated matrix of $\beta$ is either the identity matrix or direct sums of the $2 \times 2$ matrix given by 
    \[ \begin{bmatrix} 0 & 1 \\ 1 & 0 \end{bmatrix}.\]
    For each dimension, the corresponding isomorphism classes of these two forms are non-isomorphic. If $\dim Z = m$, let us denote the first form by $\alpha_1^m$ and the second form by $\alpha_2^m$ (which exists only for even $m$).
\end{thm}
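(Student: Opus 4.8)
The plan is to organize the classification around the single isomorphism invariant that survives in characteristic $2$: whether or not $\beta$ is \emph{alternating}, i.e.\ whether the map $Q\colon Z \to \KK$ defined by $Q(v) = \beta(v,v)$ vanishes identically. In characteristic $2$ this $Q$ is additive and satisfies $Q(\lambda v) = \lambda^2 Q(v)$, so it is a Frobenius-semilinear functional, and it is manifestly carried to the corresponding functional by any isometry; hence ``alternating'' is an isomorphism invariant. This already disposes of the non-isomorphism assertion: the form $\alpha_1^m$ is non-alternating (one has $\beta(e_1,e_1)=1$), whereas $\alpha_2^m$ is alternating, so the two classes are distinct in every dimension in which both occur. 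It remains to show each non-degenerate symmetric form falls into one of the two classes, which I would do by splitting into the alternating and non-alternating cases.

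First I would treat the alternating case. If $\beta$ is non-degenerate and alternating, then it is a non-degenerate symplectic form, and the standard hyperbolic-basis argument (valid over any field) produces a symplectic basis realizing $\beta$ as an orthogonal sum of copies of the hyperbolic plane $\begin{bmatrix}0&1\\1&0\end{bmatrix}$. In particular $m$ must be even and $\beta \cong \alpha_2^m$, which simultaneously establishes that no non-degenerate alternating form exists in odd dimension and hence the stated existence restriction on $\alpha_2^m$.

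The substance lies in the non-alternating case, where I claim $\beta \cong \alpha_1^m = I_m$, proved by induction on $m$. When $m=1$ a non-degenerate form is $[a]$ with $a\neq 0$, and rescaling the generator by $\sqrt{a}$ (available since $\KK$ is algebraically closed) normalizes it to $[1]$. For the inductive step I would choose $v_0$ with $\beta(v_0,v_0)\neq 0$, rescale to $\beta(v_0,v_0)=1$, and split $Z = \langle v_0\rangle \oplus v_0^\perp$ with $\beta|_{v_0^\perp}$ non-degenerate of dimension $m-1$ by Proposition~\ref{nondegenperp}. If $\beta|_{v_0^\perp}$ is again non-alternating, the inductive hypothesis finishes the argument.

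The remaining possibility, and the genuine characteristic-$2$ obstacle, is that $v_0^\perp$ might be alternating, so that naive orthogonalization branches off a symplectic summand rather than another diagonal entry. I would resolve this by re-choosing the anisotropic vector: selecting a hyperbolic pair $x,y\in v_0^\perp$ and replacing $v_0$ by $v_0' = v_0 + x$, one checks $\beta(v_0',v_0')=1$ while $v_0+y \in (v_0')^\perp$ satisfies $\beta(v_0+y,v_0+y)=1\neq 0$, so that $(v_0')^\perp$ is non-alternating and the induction proceeds with $v_0'$ in place of $v_0$. This maneuver is precisely the isometry $\langle 1\rangle \perp \begin{bmatrix}0&1\\1&0\end{bmatrix} \cong \langle 1\rangle \perp \langle 1\rangle \perp \langle 1\rangle$ in disguise; it is what prevents any ``mixed'' normal form and forces the clean dichotomy between $\alpha_1^m$ and $\alpha_2^m$. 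Combining the three cases yields the asserted classification, with non-isomorphism already guaranteed by the alternating invariant from the first paragraph.
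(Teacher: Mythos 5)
Your proof is correct, and it is worth noting that the paper itself gives no proof of Theorem~\ref{vforms}: the result is imported from \cite{glasser05} as a known classical fact, so your argument supplies something the paper deliberately omits. Your structure is sound: in characteristic $2$ the map $Q(v)=\beta(v,v)$ is additive and Frobenius-semilinear, hence ``alternating'' is an isometry invariant, which cleanly separates $\alpha_1^m$ (where $Q(e_1)=1$) from $\alpha_2^m$ (where $Q\equiv 0$); the alternating case is the standard symplectic-basis argument, which also forces even dimension; and in the non-alternating case your replacement $v_0\mapsto v_0'=v_0+x$ correctly disposes of the genuine characteristic-$2$ obstruction that $v_0^\perp$ may be alternating, since $\beta(v_0',v_0')=1$, $\beta(v_0',v_0+y)=1+1=0$, and $\beta(v_0+y,v_0+y)=1$, so $(v_0')^\perp$ is non-alternating and the induction closes. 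Two points you use implicitly and should flag: (i) the rescalings by $\sqrt{a}$ require that every element of $\KK$ be a square, which holds under the paper's standing assumption that $\KK$ is algebraically closed (the theorem is false as stated over, say, $\mathbb{F}_2$ in this generality); and (ii) when $\beta|_{v_0^\perp}$ is alternating you need it to be non-degenerate in order to extract a hyperbolic pair, which is exactly what Proposition~\ref{nondegenperp} gives since $\beta(v_0,v_0)\neq 0$. With those two remarks made explicit, the argument is complete.
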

Changing basis amounts to conjugation by an invertible map in $\Hom(Z,Z)$. However, the endomorphism spaces in $\Ver_4^+$ are considerably more restrictive, and therefore, we find more isomorphism classes of non-degenerate symmetric bilinear forms. We start our classification with the following straightforward lemma:

\begin{lem}\label{bilform_calcs}
Let  $\beta$ a symmetric bilinear form on an object $U \in \Ver_4^+$ with the decomposition $U = m\un \oplus nP = V \oplus W \oplus X$ arising from the basis described by \eqref{standard_basis}. Then, $\beta$ must satisfy the following for all $1 \leq i \leq m$ and $1 \leq j,k \leq n$:

\begin{enumerate}
    \item $\beta(v_i, x_j) = 0$, meaning $\beta|_{V \otimes X} = 0$ and $\beta|_{X \otimes V} = 0$;
    \item $\beta(w_j, x_k) = \beta(x_j, w_k)$;
    \item $\beta(x_j, x_k) = 0$, meaning $\beta|_{X \otimes X} = 0$.
\end{enumerate}
\end{lem}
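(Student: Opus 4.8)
The plan is to derive all three identities from the single relation $\beta(t.u \otimes u') = \beta(u \otimes t.u')$, established immediately before the lemma, which holds because $\beta$ is a morphism in $\Ver_4^+$—hence an $A$-module homomorphism—and $t$ acts as zero on the unit $\un$. Alongside this ``slide'' identity I will use Lemma \ref{symcondition}, which lets me treat symmetry of $\beta$ as the ordinary symmetry $\beta(u \otimes u') = \beta(u' \otimes u)$ of the underlying bilinear map. The relevant structural facts are that $x_k = t.w_k$, that $t.x_k = t^2.w_k = 0$, and that $t.v_i = 0$; in particular $\ker t = V \oplus X$, while $t$ maps $W$ onto $X$.

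For part (3) I would write $x_j = t.w_j$ and slide the $t$ onto the second factor: $\beta(x_j \otimes x_k) = \beta(t.w_j \otimes x_k) = \beta(w_j \otimes t.x_k) = 0$, since $t.x_k = 0$. For part (1) I would slide in the opposite direction, moving the $t$ that produces $x_j$ onto $v_i$: $\beta(v_i \otimes x_j) = \beta(v_i \otimes t.w_j) = \beta(t.v_i \otimes w_j) = 0$, because $t.v_i = 0$; the mirror identity $\beta(x_j \otimes v_i) = 0$ then follows either from Lemma \ref{symcondition} or from the same one-line slide. For part (2) the identical manoeuvre gives $\beta(w_j \otimes x_k) = \beta(w_j \otimes t.w_k) = \beta(t.w_j \otimes w_k) = \beta(x_j \otimes w_k)$, with no appeal to symmetry at all.

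I do not expect any real obstacle here: once one observes that each assertion is just an instance of moving the nilpotent generator $t$ between the two tensor slots so that it annihilates one of the factors (using $t^2 = 0$ and $t.v_i = 0$), every line is immediate. The only thing demanding care is the bookkeeping—keeping track of which basis vectors lie in $\ker t$ versus the image of $t$, and choosing for each identity the direction of the slide that lands a $t$ on an element it kills. This is precisely why the statement can be advertised as \emph{straightforward}.
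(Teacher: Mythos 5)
Your proposal is correct and is exactly the paper's argument: the paper proves the lemma in one line as "a direct consequence of the fact that $\beta(t.u, u') = \beta(u, t.u')$," and your three slides (landing $t$ on $x_k$, on $v_i$, and transferring it between the $w$'s) are precisely the instances of that identity the paper has in mind, with symmetry never actually needed.
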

\begin{proof}
    This is a direct consequence of the fact that $\beta(t.u, u') = \beta(u, t.u')$ for all $u,u' \in U$.
\end{proof}
The following motivates why we first consider the classification of $m\un$ and $nP$ separately.

\begin{prop}\label{splitvfromu}
Let  $\beta$ a non-degenerate symmetric bilinear form on an object $U \in \Ver_4^+$ with the decomposition $U = m\un \oplus nP = V \oplus W \oplus X$ arising from the basis described by \eqref{standard_basis}. Then, the restriction of $\beta$ to $V$ is also non-degenerate. 
\end{prop}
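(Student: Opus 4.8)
The plan is to show directly that the radical of $\beta|_V$ is zero. Since $V \cong m\un$ is a trivial (hence self-dual) object, the induced morphism $(\beta|_V)'\colon V \to V^*$ is a map of trivial objects of equal finite dimension, so it is an isomorphism exactly when it is injective, i.e.\ exactly when the radical vanishes. Accordingly, suppose $v \in V$ lies in the radical of $\beta|_V$, meaning $\beta(v, v') = 0$ for all $v' \in V$; I aim to deduce $v = 0$ using only the non-degeneracy of $\beta$ on all of $U$.

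The key idea is to compute $\beta'(v) \in U^*$ in the dual basis \eqref{standard_dual_basis} and observe that it automatically lands in the image of $t$ acting on $U^*$. Expanding,
\[\beta'(v) = \sum_i \beta(v,v_i)\, v_i^* + \sum_k \beta(v, w_k)\, w_k^* + \sum_k \beta(v, x_k)\, x_k^*,\]
the coefficients $\beta(v,v_i)$ vanish by the radical hypothesis and the coefficients $\beta(v,x_k)$ vanish by Lemma \ref{bilform_calcs}(1). Hence $\beta'(v) = \sum_k \beta(v,w_k)\, w_k^*$. Because $t.x_k^* = w_k^*$ in $U^*$, this says precisely that $\beta'(v) = t.g$, where $g \coloneqq \sum_k \beta(v,w_k)\, x_k^*$.

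Now I would feed this through the morphism property and bijectivity of $\beta'$. Since $\beta$ is non-degenerate, $\beta'\colon U \to U^*$ is an isomorphism of $A$-modules. Surjectivity gives $g = \beta'(u)$ for some $u \in U$, and since $\beta'$ commutes with the $t$-action, $\beta'(v) = t.\beta'(u) = \beta'(t.u)$. Injectivity then forces $v = t.u$, so $v \in \im(\varphi) = X$. But $v \in V$ and $V \cap X = 0$ in the fixed decomposition $U = V \oplus W \oplus X$, whence $v = 0$. This establishes that the radical of $\beta|_V$ is trivial, so $\beta|_V$ is non-degenerate.

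I expect the genuinely non-classical step — the one exploiting the structure of $\Ver_4^+$ rather than plain linear algebra — to be the recognition that $\beta'(v)$ necessarily lies in $\im(t|_{U^*})$, combined with transporting this back across $\beta'$ via the relation $\beta'(t.u) = t.\beta'(u)$. The remaining ingredients (the dual-basis expansion, Lemma \ref{bilform_calcs}, and $\im(\varphi) = X$) are routine bookkeeping, and the only point requiring a word of care is the identification of vanishing radical with categorical non-degeneracy, which is valid here because $V$ is a trivial object with $\dim V = \dim V^*$.
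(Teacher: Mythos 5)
Your proof is correct, but it takes a genuinely different route from the paper's. The paper argues by contradiction with a dimension count: if $v \in V$ were a radical vector of $\beta|_V$, then by Lemma \ref{bilform_calcs} (parts (1) and (3)) the adjunct map $\beta'$ would send the $(n+1)$-dimensional subspace $\KK v \oplus X$ into the $n$-dimensional space $W^*$, forcing a nonzero kernel vector and contradicting non-degeneracy of $\beta$ on $U$. You instead exploit the $A$-module structure: a radical vector $v$ of $\beta|_V$ has $\beta'(v)$ annihilating $\ker t = V \oplus X$ (using only part (1) of Lemma \ref{bilform_calcs}), hence $\beta'(v) \in \im(t|_{U^*})$; since $\beta'$ is a $t$-equivariant isomorphism, this pulls back to $v \in \im(t|_U) = X$, and $V \cap X = 0$ finishes the argument. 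Both proofs are elementary and of comparable length; the paper's pigeonhole argument is slightly quicker to state, while yours is more structural — it avoids dimension counting, isolates the general fact that the annihilator of $\ker(t|_U)$ is $\im(t|_{U^*})$, and shows precisely where a putative radical vector would have to live. Your preliminary reduction (identifying categorical non-degeneracy of $\beta|_V$ with vanishing radical, valid since $V$ and $V^*$ are trivial objects of the same finite dimension) is also handled correctly.
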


\begin{proof}
    Suppose for the sake of contradiction that $\beta$ is degenerate on $V$. Then, there exists a nonzero vector $v \in V$ such that $\beta|_{\KK v \otimes V} = 0$. By Lemma ~\ref{bilform_calcs}, we know that $\beta|_{\KK v \otimes X} = 0$ and $\beta|_{X \otimes (V\oplus X)} = 0$. Therefore, $\beta|_{(\KK v \oplus X) \otimes (V \oplus X) } = 0$, and the adjunct map $\beta': U \rightarrow U^*$ must map any $u\in \KK v\oplus X$ to a vector in $W^*$, where we decompose $U^* = V^* \oplus W^* \oplus X^*$. However, $\dim(\KK v\oplus X)=n+1$ and $\dim(W^*) = n$, so there exists a nonzero vector $u$ in $\KK v\oplus X$ such that $\beta'(u)=0$, contradicting the non-degeneracy of $\beta$ on $U$.
\end{proof}

An object $U \in \ver_4^+$ can be decomposed into $V \cong m\un$ and $V^\perp \cong nP$. If $\beta$ is a non-degenerate symmetric bilinear form on $U$, then by Propositions ~\ref{splitvfromu} and ~\ref{nondegenperp}, we can choose $V$ such that both $\beta|_V$ and $\beta|_{V^\perp}$ are non-degenerate symmetric bilinear forms. Because $V$ is an ordinary vector space, we already know that $\beta|_V$ belongs to one of the two classes in Theorem ~\ref{vforms}. In the remainder of this section, we will classify isomorphism classes of forms on $V^\perp \cong nP$.
\par
We will first show that on the object $P$, there exist infinitely many isomorphism classes of bilinear forms, each indexed by an element of $\KK$. We will denote suitable representatives for these isomorphism classes as $\beta_P(y): P \otimes P \rightarrow \un$, where $y \in \KK$. Similarly, on the object $2P$, there exist two isomorphism classes not arising from $\beta_P(y) + \beta_P(z)$, which we will call $\beta_{2P}(i): 2P \otimes 2P \rightarrow \un$ for $i = 0, 1$. 

\begin{lem}\label{p_forms}
    Let $\eta$ be a non-degenerate symmetric bilinear form on the object $P$. There exists a basis of $P$ such that the associated matrix of $\eta$ is given by 
    \begin{equation}
        \begin{bmatrix} y & 1 \\ 1 & 0 \end{bmatrix}
    \end{equation} 
    for suitable $y \in \KK$. These forms are pairwise non-isomorphic.
\end{lem}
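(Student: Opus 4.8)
The plan is to reduce everything to a two-by-two matrix computation over $\KK$, controlled by the (very small) automorphism group of $P$ as an $A$-module. First I would fix the basis $\{w,x\}$ of $P$ with $t.w=x$ and $t.x=0$, as in \eqref{standard_basis}. By Lemma \ref{symcondition} symmetry of $\eta$ is just ordinary symmetry, and Lemma \ref{bilform_calcs}(3) forces $\eta(x,x)=0$. Hence the matrix of $\eta$ in this basis is $\begin{bmatrix} a & b \\ b & 0 \end{bmatrix}$, with $a=\eta(w,w)$ and $b=\eta(w,x)$. Its determinant is $b^2$, so I would observe that non-degeneracy of $\eta$ (i.e.\ that $\eta'\colon P\to P^*$ is an isomorphism, which for a linear map is the same as invertibility of the matrix) is exactly the condition $b\neq 0$.

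Next I would determine the allowed changes of basis, which are precisely the $A$-module automorphisms of $P$. Any endomorphism $\phi$ satisfies $\phi(x)=\phi(t.w)=t.\phi(w)$, so writing $\phi(w)=\alpha w+\gamma x$ forces $\phi(x)=\alpha x$; thus $\phi$ has matrix $\Phi=\begin{bmatrix}\alpha & 0 \\ \gamma & \alpha\end{bmatrix}$ and is invertible iff $\alpha\neq 0$. Equivalence of forms is congruence $M\mapsto \Phi^{\top} M \Phi$ under such $\Phi$. The heart of the lemma is then the computation
\[ \Phi^{\top}\begin{bmatrix} a & b \\ b & 0 \end{bmatrix}\Phi \;=\; \alpha^2\begin{bmatrix} a & b \\ b & 0 \end{bmatrix}, \]
where the cross terms carrying $\gamma$ cancel because $2=0$ in $\KK$. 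In particular the ``shear'' parameter $\gamma$ acts trivially, so the only effective freedom is the scalar $\alpha^2$.

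To finish existence I would invoke that $\KK$ is algebraically closed of characteristic $2$, so Frobenius is bijective and $b$ has a unique square root; choosing $\alpha$ with $\alpha^2=b^{-1}$ rescales the matrix to $\begin{bmatrix} y & 1 \\ 1 & 0\end{bmatrix}$ with $y=ab^{-1}$. For pairwise non-isomorphism, if $\begin{bmatrix} y & 1 \\ 1 & 0\end{bmatrix}$ and $\begin{bmatrix} y' & 1 \\ 1 & 0\end{bmatrix}$ are congruent via some $\Phi$, comparing off-diagonal entries gives $\alpha^2=1$, hence $\alpha=1$ (again because square roots are unique in characteristic $2$), and then the diagonal entries force $y=y'$.

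The main obstacle here is conceptual rather than computational: one must recognize that the nilpotent ``shear'' part $\gamma$ of the automorphism group of $P$ contributes nothing in characteristic $2$, so that, in sharp contrast to the vector-space classification of Theorem \ref{vforms}, the diagonal entry cannot be normalized away. This is exactly what makes $y$ a genuine invariant and produces the $\KK$-indexed family of isomorphism classes asserted in the statement.
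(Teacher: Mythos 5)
Your proof is correct and follows essentially the same route as the paper: both fix a basis $\{w, t.w\}$, note $\eta(t.w,t.w)=0$ and $\eta(w,t.w)\neq 0$, rescale by an inverse square root to normalize the off-diagonal entry, and use the rigidity of $A$-module endomorphisms of $P$ to see that the diagonal entry is an invariant. The only difference is one of completeness: where the paper dismisses non-isomorphism with ``any map $P\to P$ is determined by where it sends $p$,'' you explicitly parametrize the automorphism group $\Phi=\bigl[\begin{smallmatrix}\alpha & 0\\ \gamma & \alpha\end{smallmatrix}\bigr]$ and verify that congruence acts only by the scalar $\alpha^2$ (the shear $\gamma$ dying in characteristic $2$), which is a worthwhile elaboration of the same idea.
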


\begin{proof}
    Let $p, q$ be basis vectors of $P$ such that $t.p = q$. The quantity $\eta(p, q)$ is nonzero as otherwise, $q$ would be in the kernel of $\eta$, and the form would be degenerate. Moreover, $\eta(q,q) = \eta(t.p, t.p)=\eta(p, t^2.p)=0$. Therefore, we can scale the basis vectors by $1 / \sqrt{\eta(p, q)}$ (which is a valid base change), and the associated matrix of $\eta$ with respect to this new basis is given by 
     \[ \begin{bmatrix} \tfrac{\eta(p,p)}{\eta(p,q)} & 1 \\ 1 & 0 \end{bmatrix}.\]
     Now, any map $P \rightarrow P$ is determined by where it sends $p$, so it follows immediately that these forms are pairwise non-isomorphic.
\end{proof}
The isomorphism class arising from the form in Lemma ~\ref{p_forms} will be represented by $\beta_P(y)$ for $y \in \KK$.  We can also classify some forms on the object $2P$.

\begin{Def}
    We say a symmetric bilinear form $\beta$ on an object $U \in \ver_4^+$ is $\textit{oscillating}$ if for all $u \in U$, we have $\beta(u, t.u) = 0$.
\end{Def}
With this definition, we have the following lemma:

\begin{lem}\label{2p_forms}
    Let $\eta$ be a non-degenerate oscillating bilinear form on the object $nP$ (with $n > 1$). Then, there is a subobject $S \cong 2P$ of $nP$ such that the restriction of $\eta$ to $S$ is non-degenerate, and moreover, there exists a basis of $S$ for which the associated matrix of $\eta|_S$ is given by one of the following two matrices:
    \begin{equation}\label{2p_form0}
        \left[\begin{matrix}
            0 & 0 & 0 & 1\\
            0 & 0 & 1 & 0\\
            0 & 1 & 0 & 0\\
            1 & 0 & 0 & 0\\
        \end{matrix}\right],
    \end{equation}
     \begin{equation}\label{2p_form1}
         \left[\begin{matrix}
    		1 & 0 & 0 & 1\\
    		0 & 0 & 1 & 0\\
    		0 & 1 & 0 & 0\\
            1 & 0 & 0 & 0\\
        \end{matrix}\right].
    \end{equation}
    The first form will be denoted as $\beta_{2P}(0)$, and the second will be denoted as $\beta_{2P}(1)$. These forms are not isomorphic (and are also not isomorphic to $\beta_P(y) + \beta_P(z)$ for any $y,z \in \KK$).
\end{lem}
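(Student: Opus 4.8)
The strategy is to reduce the form $\eta$ on $nP$ to an auxiliary symplectic form on the ``head'', extract a hyperbolic pair to build the subobject $S\cong 2P$, and then run an explicit normalization using the (restrictive) module automorphisms of $2P$.

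First I would record the constraints. Writing $a_{jk}=\eta(w_j,w_k)$ and $b_{jk}=\eta(w_j,x_k)$, Lemma~\ref{bilform_calcs} gives $\eta(x_j,x_k)=0$ and $b_{jk}=\eta(x_j,w_k)$, Lemma~\ref{symcondition} gives $a_{jk}=a_{kj}$ and $b_{jk}=b_{kj}$, and a short computation ($\eta(u,t.u)=\sum_{j,k}p_jp_k b_{jk}=\sum_j p_j^2 b_{jj}$ for $u=\sum_k p_k w_k + q_k x_k$) shows that oscillating is equivalent to $b_{jj}=0$ for all $j$. Hence $\tilde b(w',w'')\coloneqq\eta(w',t.w'')$ descends to a symmetric, zero-diagonal (so alternating/symplectic) form on $nP/X\cong\KK^n$. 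Ordering the basis as $(w_1,\dots,w_n,x_1,\dots,x_n)$, the Gram matrix of $\eta$ is $\left[\begin{smallmatrix}A&B\\B&0\end{smallmatrix}\right]$ with $B=(b_{jk})$; a block row-reduction yields $\det=\det(B)^2$, so $\eta$ is non-degenerate iff $\tilde b$ is. In particular $\tilde b$ is a non-degenerate symplectic form (forcing $n$ even), so it admits a hyperbolic pair $w_1',w_2'$ with $\tilde b(w_1',w_2')=1$. Setting $x_i'\coloneqq t.w_i'$, the span $S\coloneqq\langle w_1',x_1',w_2',x_2'\rangle$ is a subobject isomorphic to $2P$ (the $x_i'$ are independent since $t$ is injective on the head and $w_1',w_2'$ are independent), and the $B$-block of $\eta|_S$ is the invertible matrix $\left[\begin{smallmatrix}0&1\\1&0\end{smallmatrix}\right]$, whence $\eta|_S$ is non-degenerate by the same block-determinant computation.

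Next I would normalize $\eta|_S$. A module automorphism of $2P$ is determined by an invertible ``leading'' matrix $C\in\mathrm{GL}_2(\KK)$ and an arbitrary ``lower'' matrix $D\in M_2(\KK)$, acting by $w_i'\mapsto\sum_j C_{ji}w_j'+D_{ji}x_j'$ (so $x_i'\mapsto\sum_j C_{ji}x_j'$). Using the constraints above one computes the block transformation law $B\mapsto C^{\mathsf T}BC$ and $A\mapsto C^{\mathsf T}AC+C^{\mathsf T}BD+D^{\mathsf T}BC$. Since $B$ is invertible, the $D$-term $BD+(BD)^{\mathsf T}$ ranges over all zero-diagonal symmetric matrices, so $D$-moves kill the off-diagonal of $A$ while fixing its diagonal and fixing $B$; the only surviving datum beyond $B$ is thus the diagonal of $A$, equivalently the function $q(v)\coloneqq\eta(v_1w_1'+v_2w_2',\,v_1w_1'+v_2w_2')=a_{11}v_1^2+a_{22}v_2^2$. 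Because $\KK$ is algebraically closed of characteristic $2$, $q=\ell^2$ for a linear form $\ell$; and the stabilizer of $B=\left[\begin{smallmatrix}0&1\\1&0\end{smallmatrix}\right]$ in $\mathrm{GL}_2$ is exactly $\mathrm{SL}_2(\KK)$ (since $C^{\mathsf T}BC=(\det C)B$), which acts transitively on nonzero covectors. This leaves precisely two cases: if $q\equiv 0$ then $A$ becomes $0$, giving matrix~\eqref{2p_form0}; if $q\not\equiv 0$ then $\ell$ is normalized to $v_1$, giving $A=\mathrm{diag}(1,0)$ and matrix~\eqref{2p_form1} in the ordered basis $(w_1',w_2',x_1',x_2')$.

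The non-isomorphism claims follow from intrinsic invariants: $\beta_{2P}(0)$ is super-alternating (since $\eta(u,u)=q(\text{head of }u)\equiv 0$) whereas $\beta_{2P}(1)$ is not ($\eta(w_1',w_1')=1$), and super-alternating is preserved under isomorphism. Both $\beta_{2P}(i)$ are oscillating, while the form $\beta_P(y)$ of Lemma~\ref{p_forms} satisfies $\beta_P(y)(p,t.p)=\beta_P(y)(p,q)=1\neq 0$, so $\beta_P(y)+\beta_P(z)$ is not oscillating; as oscillating is an isomorphism invariant, neither $\beta_{2P}(i)$ is isomorphic to any $\beta_P(y)+\beta_P(z)$. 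The main obstacle is the normalization step: one must correctly identify the group of module automorphisms of $2P$ (genuinely more restrictive than arbitrary base change, which is precisely why new classes appear), establish the two-block transformation law in characteristic $2$, and check that after fixing the symplectic block $B$ the residual $\mathrm{SL}_2$-action reaches $A=\mathrm{diag}(1,0)$ \emph{exactly} rather than only up to scaling, so that the two orbits do not merge.
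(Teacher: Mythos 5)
Your proposal is correct, but the key normalization step runs along a genuinely different route from the paper's. For the construction of $S$, the paper argues directly: pick $p$ with $t.p \neq 0$, use non-degeneracy to find $q$ with $\eta(t.p,q)\neq 0$, and take $S=\langle p, t.p, q, t.q\rangle$; you instead observe that $\tilde b(u,u')=\eta(u,t.u')$ descends to a non-degenerate alternating form on the head $nP/X$ and extract a hyperbolic pair. This is more structural and yields two facts the paper never makes explicit: $\eta$ is non-degenerate iff $\tilde b$ is (via $\det = \det(B)^2$), hence $n$ must be even for such a form to exist at all. For the normalization itself, the paper does an explicit case analysis on the diagonal parameters $a=\eta(q,q)$ and $b=\eta(p,p)$ (both zero; exactly one nonzero; both nonzero, the last requiring the ad hoc choice of $d$ with $\sqrt{b}+d\sqrt{a}\neq 0$ and a hand-built basis). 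You replace all three cases with a single orbit computation under $\mathrm{Aut}_A(2P)\cong\{C+tD : C\in \mathrm{GL}_2(\KK),\ D\in M_2(\KK)\}$: your transformation law $B\mapsto C^{\mathsf T}BC$, $A\mapsto C^{\mathsf T}AC+C^{\mathsf T}BD+D^{\mathsf T}BC$ is correct; the $D$-moves do sweep out exactly the zero-diagonal symmetric matrices in characteristic $2$ (since $BD+(BD)^{\mathsf T}$ with $B$ invertible realizes every $M+M^{\mathsf T}$); the identity $C^{\mathsf T}BC=(\det C)B$ does identify the stabilizer of $B$ with $\mathrm{SL}_2(\KK)$; and the residual datum $q=\ell^2$ has exactly two $\mathrm{SL}_2$-orbits because the action on nonzero linear forms is exactly (not merely projectively) transitive. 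This explains conceptually why there are precisely two classes and handles the paper's hardest case uniformly; what the paper's argument buys in exchange is elementarity — no identification of the automorphism group or transformation law is needed, and the basis vectors are produced explicitly. Your non-isomorphism argument (super-alternating distinguishes $\beta_{2P}(0)$ from $\beta_{2P}(1)$; oscillating rules out $\beta_P(y)+\beta_P(z)$) coincides with the paper's.
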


\begin{proof}
    Let $p$ be a vector in $nP$ such that $t.p \neq 0$ (such a vector necessarily exists). The non-degeneracy of $\eta$ means there must exist a vector $q \in nP$ such that $\eta(t.p, q) \neq 0$. By the assumption that $\eta$ is oscillating, $\eta(u, t.u) = 0$ for all $u \in nP$. Therefore, $q \neq p$. Since $0 \neq \eta(t.p, q) = \eta(p, t.q)$, we have $t.q \neq 0$. Let $S$ be the subobject of $nP$ spanned by the basis vectors $\{p,t.p,q,t.q\}$. The matrix associated to $\eta|_S$ on this basis is of the form
    \begin{equation}\notag
            \left[\begin{matrix}
              \ast & 0 & \ast & \lambda \\
              0 & 0 & \lambda & 0\\
              \ast & \lambda & \ast & 0\\
            \lambda & 0 & 0 & 0\\
            \end{matrix}\right]
    \end{equation}
    for some nonzero $\lambda \in \KK$ and with $\ast$ denoting suitable entries such that the matrix is symmetric. Once we rescale each basis vector by $\frac{1}{\sqrt{\lambda}}$, the matrix with respect to this basis becomes 
    \begin{equation*}
            \left[\begin{matrix}
              b & 0 & c & 1\\
              0 & 0 & 1 & 0\\
              c & 1 & a & 0\\
                1 & 0 & 0 & 0\\
            \end{matrix}\right],
    \end{equation*}
    where $a, b, c \in \KK$. Then, we replace $q$ by $q'=q + c(t.q)$, which is a valid change of basis because $t.(q + c(t.q)) = t.q$. The associated matrix of $\eta$ is now given by
    \begin{equation*}
    \begin{bmatrix}
        b & 0 & 0 & 1\\
        0 & 0 & 1 & 0 \\
        0 & 1 & a & 0 \\
        1 & 0 & 0 & 0
    \end{bmatrix}.
    \end{equation*}
    The matrix above has determinant $1$, so this basis change preserves non-degeneracy. 
    
    If $a = b = 0$, we get the isomorphism class $\beta_{2P}(0)$, as claimed. Now, suppose $b \neq 0$ but $a = 0$. We can define $p' = \tfrac{1}{\sqrt{b}}p$ and $q'' = \sqrt{b}q'$. Then, with respect to the basis $\{p', t.p', q'', t.q''\}$, the associated matrix of $\eta$ is given by 
    \begin{equation*}\label{eqn:100}
    \begin{bmatrix}
        1 & 0 & 0 & 1\\
        0 & 0 & 1 & 0 \\
        0 & 1 & 0 & 0 \\
        1 & 0 & 0 & 0
    \end{bmatrix},
    \end{equation*}
    which is the associated matrix of the form $\beta_{2P}(1)$ representing our second isomorphism class. Similarly, if $a = 0$ and $b \neq 0$, we can interchange the order of $p, t.p$ with $q',t.q'$ in our basis and then apply the same process, which will give us the same matrix. Therefore, suppose that both $a$ and $b$ are nonzero. We can find $d\in \KK$ such that $k \coloneqq \sqrt{b} + d\sqrt{a} \neq 0$. We define a new basis $\{p', t.p', q'', t.q''\}$ of $2P$ given by $p'=\frac{1}{k}(p + dq' + da(t.p) + b(t.q'))$ and $q''=\sqrt{a}p+\sqrt{b}q'$. We have:
    \begin{itemize}
        \item $\eta(p',p')=\frac{1}{k^2}(b + d^2a)=\frac{1}{k^2}(k^2)=1$,
        \item $\eta(p',t.p')=\frac{1}{k^2}(2d)=0$,
        \item $\eta(p',q'')=\frac{1}{k}(\sqrt{w}y+b\sqrt{y}w+\sqrt{a}b+d\sqrt{b}a)=0$,
        \item $\eta(t.p', q'')=\frac{1}{k}(\sqrt{b} + d\sqrt{a})=\frac{1}{k}(k)=1$, and
        \item $\eta(q'',q'')=(\sqrt{a})^2b+(\sqrt{b})^2a= 0$.
    \end{itemize}
    Therefore, with respect to this new basis, the associated matrix of the form is
        \begin{equation*}
        \begin{bmatrix}
        1 & 0 & 0 & 1\\
        0 & 0 & 1 & 0 \\
        0 & 1 & 0 & 0 \\
        1 & 0 & 0 & 0
        \end{bmatrix},
        \end{equation*}
        which we have already seen. Thus, we obtain the form $\beta_{2P}(0)$ when $y = w = 0$ and $\beta_{2P}(1)$ otherwise. 
        \par
        To see that $\beta_{2P}(0)$ and $\beta_{2P}(1)$ give rise to distinct isomorphism classes, notice that the first form is super-alternating and the second form is not. Moreover, these two forms are oscillating, so they are non-isomorphic to the forms $\beta_P(k) + \beta_P(l)$ where $k, l \in \KK$, which are not oscillating.
\end{proof}
The forms arising in Lemma ~\ref{p_forms} and Lemma ~\ref{2p_forms} serve as the building blocks for all forms on $nP$, as the next lemma demonstrates.

\begin{lem}\label{splitp}
Any non-degenerate symmetric bilinear form $\beta$ on the object $nP$ admits one of the following sum decompositions:
\begin{align*}
    \beta &= \sum_{i=1}^{n} \beta_P(y_i) \\
    \beta &= \sum_{j=1}^{n/2} \beta_{2P}(a_j)
\end{align*}
for suitable $y_i \in \KK$ and $a_j \in \{0, 1\}$.
\end{lem}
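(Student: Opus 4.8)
The plan is to split on whether $\beta$ is \emph{oscillating}, i.e.\ whether $\beta(u, t.u) = 0$ for all $u \in nP$, and in each case to peel off one building block at a time using Proposition~\ref{nondegenperp}. The first thing I would record is that the quantity $Q(u) := \beta(u, t.u)$ is additive in $u$: the cross term $\beta(u, t.u') + \beta(u', t.u)$ vanishes in characteristic $2$, since $\beta(u, t.u') = \beta(t.u, u') = \beta(u', t.u)$ using both the module identity $\beta(t.u,u')=\beta(u,t.u')$ and symmetry. Together with $Q(\lambda u) = \lambda^2 Q(u)$ and $Q(t.u) = 0$, this makes the zero set of $Q$ a $t$-stable subspace, so being oscillating is an isomorphism invariant that is moreover inherited by every subobject. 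This inheritance is exactly what will drive the induction.

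In the oscillating case I would argue as follows. There is no non-degenerate oscillating form on $P$ itself, since every $\beta_P(y)$ has $\beta(p,t.p)=1\neq 0$; hence $n\geq 2$ whenever $n>0$. Lemma~\ref{2p_forms} then produces a subobject $S\cong 2P$ on which $\beta$ restricts to a non-degenerate form equivalent to $\beta_{2P}(0)$ or $\beta_{2P}(1)$, and Proposition~\ref{nondegenperp} gives an orthogonal splitting $nP=S\oplus S^\perp$ with $\beta|_{S^\perp}$ non-degenerate and $S^\perp\cong (n-2)P$ again oscillating. Induction on $n$ then yields $\beta=\sum_j\beta_{2P}(a_j)$ (forcing $n$ even), with the base case $n=0$ vacuous.

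The non-oscillating case carries the real content, because the lemma asserts such a $\beta$ is a sum of $\beta_P$'s \emph{with no $\beta_{2P}$ summands mixed in}. The naive induction — pick $u$ with $\beta(u,t.u)\neq 0$, split off $P_u:=\langle u,t.u\rangle$ (on which $\beta$ is some non-degenerate $\beta_P(y)$, as $\beta(t.u,t.u)=0$), and recurse — is not obviously safe, since $P_u^{\perp}$ can be oscillating, which would force precisely the forbidden mixed decomposition. The cleanest way I see to rule this out is a matrix reduction. Using Lemma~\ref{bilform_calcs} and ordering a basis as $(w_1,\dots,w_n,x_1,\dots,x_n)$, the Gram matrix of $\beta$ takes the block shape
\[ M = \begin{bmatrix} A & C \\ C & 0 \end{bmatrix}, \]
with $A,C$ symmetric and $C$ invertible (the invertibility being exactly non-degeneracy). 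A module automorphism of $nP$ has the form $\begin{bmatrix} G & 0 \\ H & G \end{bmatrix}$ with $G$ invertible and $H$ arbitrary, and conjugating $M$ by it sends $C\mapsto G^{T}CG$ and $A\mapsto G^{T}AG + (N+N^{T})$ with $N=G^{T}CH$ arbitrary. Since $N+N^{T}$ ranges over all zero-diagonal symmetric matrices, $A$ matters only through its diagonal, and one checks that $\beta$ is oscillating exactly when $C$ has zero diagonal. Thus in the non-oscillating case $C$ is a non-alternating non-degenerate symmetric form, hence congruent to $I_n$ over our algebraically closed field of characteristic $2$; after absorbing the off-diagonal part of $A$ via $H$ and reordering the basis to interleave $w_i$ with $x_i$, the matrix $M$ becomes the block sum of the $2\times 2$ blocks $\begin{bmatrix} y_i & 1 \\ 1 & 0 \end{bmatrix}$, i.e.\ $\beta=\sum_i\beta_P(y_i)$.

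I expect the main obstacle to be precisely this no-mixing phenomenon in the non-oscillating case, and the key external input is the classical fact that in characteristic $2$ a non-degenerate symmetric bilinear form over an algebraically closed field is congruent either to the identity (non-alternating case) or to a sum of hyperbolic planes (alternating case). The remaining verifications — that restriction preserves oscillating, that the automorphism group of $nP$ has the stated block form, and the bookkeeping in the oscillating induction — I expect to be routine given Lemmas~\ref{p_forms} and~\ref{2p_forms}.
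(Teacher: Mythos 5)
Your proof is correct, and its treatment of the crucial step is genuinely different from the paper's. Both arguments handle the oscillating case the same way (Lemma \ref{2p_forms} plus Proposition \ref{nondegenperp} and induction, with your remark that oscillation passes to orthogonal complements making the induction clean), but they diverge exactly where you identify the difficulty: why a non-oscillating form has no $\beta_{2P}$ summands. The paper runs the greedy induction unconditionally, accepting a possibly mixed output $\sum_i\beta_P(y_i)+\sum_j\beta_{2P}(a_j)$, and then removes the mixing with an explicit change of basis on $3P$ proving $\beta_P(y)+\beta_{2P}(a)=\beta_P(\tilde y)+\beta_P(\tilde z)+\beta_P(\tilde a)$ (via $p'=p+q+r+(y+a)(t.p)$, $q'=p+q$, and Lemma \ref{p_forms} applied to $\tilde S^{\perp}$). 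You instead bypass induction in the non-oscillating case with the Gram-matrix reduction: $M=\left[\begin{smallmatrix}A&C\\ C&0\end{smallmatrix}\right]$, automorphisms $\left[\begin{smallmatrix}G&0\\ H&G\end{smallmatrix}\right]$ acting by $C\mapsto G^{\top}CG$ and $A\mapsto G^{\top}AG+N+N^{\top}$ with $N=G^{\top}CH$ arbitrary; non-oscillating means $\operatorname{diag}(C)\neq 0$ (since $\beta(u,t.u)=\sum_i C_{ii}a_i^2$ where $a$ is the $w$-coordinate vector of $u$), i.e.\ $C$ is non-alternating, so Theorem \ref{vforms} --- the same classical input the paper quotes, hence not really external --- gives $C\cong I_n$, after which $H$ kills the off-diagonal of $A$, producing $\bigoplus_i\left[\begin{smallmatrix}y_i&1\\ 1&0\end{smallmatrix}\right]$ directly. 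I verified the supporting computations (non-degeneracy of $M$ iff $C$ invertible; the transformation law, using $C=C^{\top}$ from Lemma \ref{bilform_calcs}; $N+N^{\top}$ sweeping out all zero-diagonal symmetric matrices in characteristic $2$) and they hold. Comparing the two: the paper's route is elementary and needs nothing beyond small explicit matrix manipulations, but the $3P$ identity is ad hoc; yours is more structural --- it explains why mixing is impossible rather than repairing it after the fact, and the description of the $\operatorname{Aut}(nP)$-action on the pair $(A,C)$ is a reusable piece of bookkeeping --- at the cost of invoking the classical classification (Theorem \ref{vforms}) for the block $C$ rather than only for the restriction to $m\un$ as the paper does.
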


\begin{proof}
Suppose that we can find a vector $u \in nP$ such that $\beta(u, t.u) \neq 0$. Then, $\beta$ restricted to the subobject $Z$ of $nP$ spanned by $\{u, t.u\}$ is non-degenerate, and therefore, by Lemma ~\ref{p_forms}, $\beta|_Z$ is in the isomorphism class as $\beta_{P}(y)$ for some $y \in \KK$. 

Otherwise, we have $\beta(u, t.u) = 0$ for all $u \in nP$ (i.\@e.\@ the form is oscillating). In this case, Lemma ~\ref{2p_forms} applies, and we can find a subobject $Y$ of $nP$ for which the restriction of $\beta$ gives the form $\beta_{2P}(a_j)$. 

In either case, once we find such a subobject $Z$ or $Y$, we can take its orthogonal complement and proceed inductively by way of Proposition ~\ref{nondegenperp}. This proves that $\beta$ is of the form
\[ \beta = \sum_i \beta_P(y_i) + \sum_j \beta_{2P}(a_j) \]
for suitable $y_i \in \KK$ and $a_j \in \{0, 1\}$. Now, given this decomposition, suppose that both isomorphism classes are present. Then, there is a basis $\{p, t.p, q, t.q, r, t.r\}$ of a subobject $S \cong 3P$ of $nP$ such that the associated matrix of $\beta|_S$ relative to this basis is given by 
\[ \begin{bmatrix}
    y & 1 & & & & \\
    1 & 0 &   &  &  &  \\
    & & a  & 0 & 0 & 1 \\
    & & 0  & 0 & 1 & 0 \\
    & & 0  & 1 & 0 & 0 \\
    & & 1  & 0 & 0 & 0
\end{bmatrix},\]
with $y \in \KK$ and $a \in \{0, 1\}$. Let $p' = p + q + r + (y+a)(t.p)$ and $q' = p + q$. Then, let $\tilde{S}$ denote the subobject of $S$ spanned by $\{p', t.p', q', t.q'\}$. With respect to this basis, the associated matrix of $\beta|_{\tilde{S}}$ is given by 
        \begin{equation*}
        \begin{bmatrix}
        \ast & 1 & 0 & 0\\
        1 & 0 & 0 & 0 \\
        0 & 0 & \ast & 1 \\
        0 & 0 & 1 & 0
        \end{bmatrix},
        \end{equation*}
where $\ast$ are suitable entries. Hence, the restriction of $\beta$ to $\tilde{S}$ is the sum $\beta_P(\tilde{y}) + \beta_P(\tilde{z})$ for suitable $\tilde{y}, \tilde{z} \in \KK$. Moreover, we can write $S = \tilde{S} + \tilde{S}^{\perp}$. By Lemma ~\ref{p_forms}, the restriction of $\beta$ to $\tilde{S}^{\perp}$ will be of the form $\beta_P(\tilde{a})$ for suitable $\tilde{a} \in \KK$. Thus, the sum of $\beta_P(y)$ with $\beta_{2P}(a)$ can be rewritten as the sum $\beta_P(\tilde{y}) + \beta_P(\tilde{z}) + \beta_P(\tilde{a})$. From here, the statement of the lemma follows.

\end{proof}
Lemma ~\ref{splitp} shows that any non-degenerate symmetric bilinear form on $nP$ is either the sum of $n/2$-copies of irreducible forms on $2P$ or the sum of $n$-copies of irreducible forms on $P$. We will show that in the former case, there are two distinct isomorphism classes that arise, whereas in the latter, there are infinitely many. We begin with the first case, which is easier to prove:

\begin{lem}\label{4Pbasis}
Suppose $\beta$ is a non-degenerate symmetric bilinear form on $nP$ such that 
\[\beta = \sum_{j=1}^{n/2} \beta_{2P}(a_j)\]
for $a_j \in \{0, 1\}$. Then, $\beta$ is in the same isomorphism class as one of the following two forms:
\begin{align*}
    \beta_{2P;0}^n &\coloneqq \frac{n}{2}\beta_{2P}(0) \\
    \beta_{2P;1}^n &\coloneqq \beta_{2P}(1) + \frac{n-2}{2}\beta_{2P}(0).
\end{align*}
The two forms are not isomorphic.
\end{lem}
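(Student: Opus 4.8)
The plan is to isolate a single $\Z/2$-valued invariant --- the super-alternating property --- that separates the two candidate forms, to prove it is a complete invariant among sums of the $\beta_{2P}(a_j)$ by means of one ``collapsing'' isomorphism on $4P$, and then to finish by an easy induction.

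First I would record that being super-alternating is intrinsic, hence an isomorphism invariant, and that it is additive under $+$: for a sum form on $V\oplus W$ the cross terms $V\otimes W$ and $W\otimes V$ are killed, so $(\beta+\gamma)(u,u)=\beta(v,v)+\gamma(w,w)$ for $u=v+w$, and a sum of super-alternating forms is super-alternating. Since the diagonal of \eqref{2p_form0} vanishes, $\beta_{2P}(0)$ and hence $\tfrac n2\beta_{2P}(0)$ are super-alternating; since the $(1,1)$-entry of \eqref{2p_form1} is nonzero, $\beta_{2P}(1)$ is not, and orthogonality of the summands shows $\beta_{2P}(1)+\tfrac{n-2}2\beta_{2P}(0)$ is not super-alternating. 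This already proves the two displayed forms are non-isomorphic.

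The heart of the argument is the single reduction
\[ \beta_{2P}(1) + \beta_{2P}(1) \cong \beta_{2P}(1) + \beta_{2P}(0). \]
To prove it, write the domain $4P$ as $2P\oplus 2P$ with blocks carrying bases $\{p_1,t.p_1,q_1,t.q_1\}$ and $\{p_2,t.p_2,q_2,t.q_2\}$ realizing \eqref{2p_form1}, and exhibit the subobject $S\cong 2P$ generated by $p_1+p_2$ and $q_1$ (with $t.(p_1+p_2)$ and $t.q_1$). Using $\beta(p_1,p_1)=\beta(p_2,p_2)=1$ and the vanishing of all cross terms, a short computation gives $\beta(p_1+p_2,p_1+p_2)=0$, $\beta(p_1+p_2,t.q_1)=\beta(t.p_1+t.p_2,q_1)=1$, and all remaining entries zero, so that $\beta|_S=\beta_{2P}(0)$. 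Because $\beta|_S$ is non-degenerate, Proposition \ref{nondegenperp} yields $4P=S\oplus S^\perp$ with $S^\perp\cong 2P$ and $\beta|_{S^\perp}$ non-degenerate; since the ambient form is a sum of oscillating forms it is oscillating, and so is its restriction, whence Lemma \ref{2p_forms} forces $\beta|_{S^\perp}$ to be $\beta_{2P}(0)$ or $\beta_{2P}(1)$. It cannot be $\beta_{2P}(0)$, for then $\beta=\beta|_S+\beta|_{S^\perp}$ would be super-alternating, contradicting $\beta(p_1,p_1)=1$. Hence $\beta|_{S^\perp}=\beta_{2P}(1)$, which is the claimed reduction.

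Finally I would induct on the number $k$ of indices with $a_j=1$. If $k=0$ the form is literally $\beta_{2P;0}^n$; if $k\geq 1$, repeatedly applying the reduction to a pair of $\beta_{2P}(1)$-summands (valid since $+$ descends to isomorphism classes) converts two $\beta_{2P}(1)$'s into one $\beta_{2P}(1)$ plus one $\beta_{2P}(0)$, lowering $k$ by one until $k=1$, giving $\beta_{2P}(1)+\tfrac{n-2}2\beta_{2P}(0)=\beta_{2P;1}^n$. I expect the main obstacle to be locating the correct subobject $S$: the natural first guess (replacing $q_1$ by $q_1+q_2$) produces $\beta(p_1+p_2,t.q_1')=0$ and fails, and the point of the proof is that the slightly asymmetric choice $p_1+p_2$ together with $q_1$ works, after which the super-alternating invariant identifies $S^\perp$ with no second explicit basis change needed. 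The only genuinely computational step is verifying the entries of $\beta|_S$, which is routine bookkeeping.
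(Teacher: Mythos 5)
Your proof is correct, and its skeleton coincides with the paper's: both hinge on the single reduction $\beta_{2P}(1)+\beta_{2P}(1)\cong\beta_{2P}(0)+\beta_{2P}(1)$, both use exactly the same first subobject (your $S$ spanned by $p_1+p_2$ and $q_1$ is the paper's $S_1$ spanned by $u_1+u_3$ and $u_2$), both finish by induction on the number of $\beta_{2P}(1)$-summands, and both separate the two canonical forms by the super-alternating property. Where you genuinely diverge is in identifying the complementary summand. The paper writes down a second explicit basis, $u_7=u_3+t.u_2$ and $u_8=u_2+u_4$, verifies directly that its Gram matrix is \eqref{2p_form1}, and checks that this subobject is orthogonal to $S_1$; you instead invoke Proposition \ref{nondegenperp} to split off $S^\perp\cong 2P$ (Krull--Schmidt pins down its type), observe that $\beta|_{S^\perp}$ is non-degenerate and oscillating, apply Lemma \ref{2p_forms} with $n=2$ (where the $2P$-subobject it produces must be all of $S^\perp$ by dimension count) to conclude $\beta|_{S^\perp}$ is $\beta_{2P}(0)$ or $\beta_{2P}(1)$, and eliminate $\beta_{2P}(0)$ with the super-alternating invariant. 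Your route avoids a second matrix computation and the orthogonality check, at the cost of leaning on the earlier classification lemma and the invariance argument; the paper's version is self-contained arithmetic. Both are complete and correct proofs of the statement.
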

\begin{proof}
We are done if for at most one value of $j$, we have $a_j = 1$. So let us suppose there are least two such values of $j$. Without loss of generality, we can assume they are the first two indices, i.\@e.\@ $a_1 = a_2 = 1$. Now, we will consider the direct summand $\beta_{2P}(a_1) + \beta_{2P}(a_2)$ of $\beta$, with basis $\{u_1, t.u_1, u_2, t.u_2\}$ for the first copy of $2P$ and $\{u_3, t.u_3, u_4, t.u_4\}$ a basis for the second copy of $2P$. We claim that this form can be written as $\beta_{2P}(0) + \beta_{2P}(1)$ by suitably changing basis. 

Let $u_5 = u_1 + u_3$, and let $u_6=u_2$. The associated matrix of $\beta$ restricted to the subobject $S_1$ spanned by $\{u_5, t.u_5, u_6, t.u_6\}$ (with respect to this basis) is given by \eqref{2p_form0}.
Similarly, define $u_7 = u_3 + t.u_2$ and $u_8 = u_2 + u_4$. The associated matrix of $\beta$ restricted to the subobject $S_2$ spanned by $\{u_7, t.u_7, u_8, t.u_8\}$ (with respect to this basis) is given by \eqref{2p_form1}. Moreover, we can see that $S_1$ and $S_2$ are orthogonal complements. This shows that $\beta_{2P}(1) + \beta_{2P}(1) = \beta_{2P}(0) + \beta_{2P}(1)$; the claim follows by induction. The two forms are not isomorphic because the form $\beta^n_{2P;0}$ is super-alternating, whereas the form $\beta_{2P;1}^n$ is not.
\end{proof}
We now consider the second case, where the non-degenerate symmetric bilinear form is the sum of forms on the object $P$. The procedure for doing so is more complicated than that of the first case. To start, we have the following lemma.

\begin{lem}\label{P+Pbasis}
For any $y\neq z \in \KK$, the form $\beta=\beta_P(y) + \beta_P(z)$ is in the same isomorphism class as $\beta_P(a) + \beta_P(y + z + a)$ for all $a \in \KK$.
\end{lem}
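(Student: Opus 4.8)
The plan is to realize the target form as the restriction of $\beta=\beta_P(y)+\beta_P(z)$ to a new $\beta$-orthogonal decomposition $2P=P'\oplus(P')^{\perp}$ in which $\beta|_{P'}\cong\beta_P(a)$; the content of the lemma is then that the complementary parameter is forced to be $y+z+a$. Throughout I fix the standard basis $\{p_1,t.p_1,p_2,t.p_2\}$ on the two copies of $P$, so that $\beta(p_i,p_i)=y,z$, $\beta(p_i,t.p_i)=1$, and all remaining pairings between the two copies vanish (and $\beta|_{X\otimes X}=0$ by Lemma \ref{bilform_calcs}, where $X=\ker t$). For a generator $u$ of a sub-copy of $P$ only two numbers matter: by Lemma \ref{p_forms} the restriction to the $P$ it generates has class $\beta_P\!\big(\beta(u,u)/\beta(u,t.u)\big)$.

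First I would produce $P'$. Writing $u=\alpha p_1+\gamma p_2$, a short computation in characteristic $2$ (the off-diagonal contributions to $\beta(u,u)$ cancel) gives $\beta(u,u)=\alpha^2 y+\gamma^2 z$ and $\beta(u,t.u)=(\alpha+\gamma)^2$. Thus the parameter of the sub-$P$ generated by $u$ is $(\alpha^2 y+\gamma^2 z)/(\alpha+\gamma)^2$, and to hit a prescribed $a$ I must solve $\alpha^2(y+a)=\gamma^2(z+a)$ with $\alpha\neq\gamma$ (so that $\beta(u,t.u)\neq 0$). If $a=y$ then $\gamma=0$ works (using $y+z\neq0$), if $a=z$ then $\alpha=0$ works, and otherwise I set $\gamma=1$ and take $\alpha$ to be a square root of $(z+a)/(y+a)$, which exists since $\KK$ is algebraically closed; here the hypothesis $y\neq z$ is exactly what guarantees $\alpha\neq 1=\gamma$. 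Letting $P'$ be the sub-copy of $P$ generated by this $u$ (rescaled so $\beta(u,t.u)=1$), we have $\beta|_{P'}\cong\beta_P(a)$, and since this restriction is non-degenerate, Proposition \ref{nondegenperp} yields $2P=P'\oplus(P')^{\perp}$ with $\beta|_{(P')^{\perp}}$ non-degenerate. As $(P')^{\perp}$ is a two-dimensional non-degenerate subobject it cannot lie in $X$ (on which $\beta$ vanishes), so it is a copy of $P$ and $\beta|_{(P')^{\perp}}\cong\beta_P(a')$ for some $a'\in\KK$; it remains to show $a'=y+z+a$.

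The main obstacle is pinning down $a'$, and I would do this with an invariant living on the quotient $\bar U:=2P/\ker t$ rather than by a direct basis chase. On $\bar U$ define $\langle\bar p,\bar p'\rangle:=\beta(p,t.p')$ and $Q(\bar p):=\beta(p,p)$. Both descend to $\bar U$: changing a representative by $r\in\ker t$ alters these only by terms $\beta(r,t.p')$, $\beta(r,p)+\beta(p,r)$, and $\beta(r,r)$ that vanish because $\beta|_{X\otimes X}=0$ and we are in characteristic $2$. One checks $\langle\cdot,\cdot\rangle$ is a non-degenerate symmetric bilinear form (its matrix in $\bar p_1,\bar p_2$ is the identity, using Lemma \ref{symcondition} and the module identity $\beta(t.u,u')=\beta(u,t.u')$), while $Q$ is a quadratic form whose polarization $2\beta(p,p')$ is zero. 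By construction the parameter attached to a sub-$P$ generated by a lift of $\bar e$ (with $\langle\bar e,\bar e\rangle\neq0$) is exactly $Q(\bar e)/\langle\bar e,\bar e\rangle$. Moreover the images $\bar P_1,\bar P_2$ of generators of $P'$ and $(P')^{\perp}$ form a $\langle\cdot,\cdot\rangle$-orthogonal basis of $\bar U$: they span because $P'+(P')^{\perp}=2P$, and $\langle\bar P_1,\bar P_2\rangle=\beta(P_1,t.P_2)=0$ since $t.P_2\in(P')^{\perp}$ is $\beta$-orthogonal to $P_1$.

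Everything then reduces to the identity that, for any $\langle\cdot,\cdot\rangle$-orthogonal basis $\{\bar e_1,\bar e_2\}$ with $\langle\bar e_i,\bar e_i\rangle\neq0$,
\[
\frac{Q(\bar e_1)}{\langle\bar e_1,\bar e_1\rangle}+\frac{Q(\bar e_2)}{\langle\bar e_2,\bar e_2\rangle}=y+z .
\]
I would prove this by direct substitution: writing $\bar e_1=\alpha\bar p_1+\gamma\bar p_2$ and $\bar e_2=\delta\bar p_1+\epsilon\bar p_2$, orthogonality reads $\alpha\delta=\gamma\epsilon$, and after clearing denominators the numerator collapses (in characteristic $2$) to $(\alpha^2\epsilon^2+\gamma^2\delta^2)(y+z)$, while the denominator $(\alpha^2+\gamma^2)(\delta^2+\epsilon^2)$ reduces, using $\alpha^2\delta^2=\gamma^2\epsilon^2$, to the same nonzero factor $\alpha^2\epsilon^2+\gamma^2\delta^2$. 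Applying this to $\{\bar P_1,\bar P_2\}$ gives $a+a'=y+z$, hence $a'=y+z+a$. Finally, since $P'$ and $(P')^{\perp}$ are $\beta$-orthogonal, $\beta\cong\beta|_{P'}+\beta|_{(P')^{\perp}}=\beta_P(a)+\beta_P(y+z+a)$, as claimed.
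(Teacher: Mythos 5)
Your proof is correct, and it takes a genuinely different route from the paper's. The paper argues by one explicit global change of basis: it sets $k=\sqrt{(z+a)/(z+y)}$ (using $y\neq z$), writes down $u_3=ku_1+(1+k)u_2+c\,t.u_1+d\,t.u_2$ and $u_4=(1+k)u_1+ku_2$, and verifies all six pairings by hand so that the matrix visibly splits as $\beta_P(a)+\beta_P(y+z+a)$. You construct only the first summand $P'$ (your equation $\alpha^2(y+a)=\gamma^2(z+a)$ with $\alpha\neq\gamma$ is in essence the paper's choice of $k$), then use Proposition~\ref{nondegenperp} to split off $(P')^{\perp}$ abstractly, and determine its parameter not by computation but by a conservation law on $2P/\ker t$: for any $\langle\cdot,\cdot\rangle$-orthogonal basis with nonzero diagonal entries, the sum of the ratios $Q(\bar e_i)/\langle\bar e_i,\bar e_i\rangle$ equals $y+z$. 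All your verifications are sound: well-definedness of $Q$ and $\langle\cdot,\cdot\rangle$ on the quotient, the complement being a copy of $P$ rather than $2\un$ because $\beta$ vanishes on $\ker t\otimes \ker t$, orthogonality of the images of the two generators, and the characteristic-$2$ algebra in the key identity. It is worth observing that your invariant is exactly the paper's later form invariant $\mathcal{I}_\beta$ of Definition~\ref{invariantdef}, specialized to an orthogonal basis of $X$: taking $\chi_i=t.e_i$, the $X$-matrix is diagonal with entries $\langle\bar e_i,\bar e_i\rangle$ and $f_\beta(\chi_i)=Q(\bar e_i)$, so your conservation law is the $2P$ case of Theorem~\ref{invariant}, proved directly (and legitimately so, since that theorem appears only later and does not depend on this lemma). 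What your route buys is conceptual transparency — it explains why the two parameters are forced to sum to $y+z$ without ever exhibiting the complementary generator — and it anticipates the invariant machinery the paper later develops to prove non-isomorphism; what the paper's route buys is brevity and a completely explicit basis change.
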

\begin{proof}
Let $\{u_1, t.u_1\}$ be a basis of the first $P$ object such that the associated matrix of $\beta_P(y)$ is given by \eqref{2p_form0}, and let $\{u_2, t.u_2\}$ be a basis of the second $P$ object such that the associated matrix of $\beta_P(z)$ is given by \eqref{2p_form1}. For some arbitrary $a \in \KK$, let $k=\sqrt{\frac{z+a}{z+y}}$, which is well-defined because $y\neq z$. Define $c = ky$ and $d = (1+k)x$. Then, $k(1+k)y + (1+k)kz + c(1+k) + dk = k((1+k)y+d) + (1+a)(az+c) = 0$. Now, let $u_3=ku_1+(1+k)u_2+ct.u_1+dt.u_2$, and let $u_4=(1+k)u_1+ku_2$. We have

\begin{itemize}
    \item $\beta(u_3,u_3)=k^2y+(1+k)^2z=k^2(y+z)+z=a$,
    \item $\beta(u_3,t.u_3)=k^2+(1+k)^2=1$,
    \item $\beta(u_3,u_4)=k(1+k)y+(1+k)kz+c(1+k)+dk=0$,
    \item $\beta(u_3,t.u_4)=k(1+k)+(1+k)k=0$,
    \item $\beta(u_4,u_4)=(1+k)^2y+k^2z=k^2(y+z)+y=y+z+a$, and
    \item $\beta(u_4,t.u_4)=(1+k)^2+k^2=1$.
\end{itemize}
Therefore, with respect to the basis $\{u_3, t.u_3, u_4, t.u_4\}$, the associated matrix of $\beta$ is
\begin{equation}\notag
		\bbordermatrix{& u_3& t.u_3& u_4& t.u_4\cr
		  &a & 1 &&\cr
		  &1 & 0 &&\cr
    	& & & y+z+a & 1 \cr
		  & & & 1 & 0 \cr
		}.
\end{equation}
This proves the claim.
\end{proof}

Now, our strategy will be to repeatedly use Lemma ~\ref{P+Pbasis} to convert a form that is the direct sum of forms described in Lemma ~\ref{p_forms} into a canonical form. For simplicity, we will refer to the process of identifying $\beta_P(y) + \beta_P(z)$ with $\beta_P(a) + \beta_P(y+z+a)$ as ``replacing $y, z$ by $a, y+z+a$". Given a form $\beta_P(y)$, we will refer to $y$ as the \textit{assigned scalar} of $\beta_P(y)$.

\begin{lem}\label{Pforms}
    Let $\beta$ be a non-degenerate symmetric bilinear form on the object $nP$ with $n > 1$ such that 
    \[\beta = \sum_{i=1}^n \beta_P(y_i)\]
    for suitable $y_i \in \KK$. If not all values of $y_i$ are the same, then we can write
    \[\beta = \beta_P(k) + \beta_P(1) + (n-2)\beta_P(0)\]
    for some suitable $k \in \KK$. If $n = 2$, then $k \neq 1$.
\end{lem}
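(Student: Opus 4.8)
The plan is to forget the forms and work purely with the multiset of \emph{assigned scalars} $\{y_1,\dots,y_n\}$, on which Lemma~\ref{P+Pbasis} provides a single elementary move: if two assigned scalars satisfy $y\neq z$, we may replace $y,z$ by $a,\,y+z+a$ for any $a\in\KK$. Because $(y+z+a)+a=y+z$ in characteristic $2$, every move preserves the total sum $s:=\sum_i y_i$; this already pins down the target, since matching sums in $\beta_P(k)+\beta_P(1)+(n-2)\beta_P(0)$ forces $k=s+1$. For $n=2$, the hypothesis that not all $y_i$ are equal means $y_1\neq y_2$, whence $s\neq 0$ and $k=s+1\neq 1$, which is exactly the asserted inequality; the case $n=2$ is then finished by a single move with $a=1$, sending the pair $(y_1,y_2)$ to $(1,\,s+1)$.

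For $n\geq 3$ I would first reduce to at most two nonzero assigned scalars. The basic step is this: if among the nonzero scalars there are two distinct values $a\neq b$, replace them by $a+b,\,0$; since $a+b\neq 0$ this strictly lowers the number of nonzero scalars. Iterating drives that number down to at most $2$, leaving the configuration $\{u,v,0^{\,n-2}\}$ with $u+v=s$. Throughout, the multiset stays non-constant (each step leaves behind both a zero and a nonzero entry), so we never get stuck at a constant configuration, and in particular $(u,v)\neq(0,0)$.

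The one place this stalls --- and the main obstacle --- is when all nonzero scalars already coincide, equal to some $c\neq 0$: then no two nonzero values are distinct and the basic step does not apply. Here I would exploit that $\KK$ is infinite (being algebraically closed of characteristic $2$) to break the symmetry: the configuration being non-constant, it contains a zero, and applying the move to a pair $(0,c)$ with parameter chosen outside $\{0,c\}$ spreads $c$ into two distinct nonzero values, after which the basic step resumes and the count drops. It then remains to normalize the terminal pair $\{u,v\}$ with $u+v=s$. If $u\neq v$, one move sends $(u,v)$ to $(1,\,s+1)$ and we are done. The delicate subcase is $u=v$, which forces $s=0$ and target $\{1,1,0^{\,n-2}\}$ with a repeated value, so it cannot be reached by simply pairing two distinct leftover scalars; but then $u=v\neq 0$ and, since $s=0$ cannot occur when $n=2$, there is a spare zero, so a short explicit sequence such as $\{u,u,0,\dots\}\to\{1,u+1,u,0,\dots\}\to\{1,1,0,\dots\}$ produces the target. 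Assembling these steps completes the induction.
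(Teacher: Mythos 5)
Your proof is correct and runs on the same engine as the paper's: both reduce the lemma to bookkeeping on the multiset of assigned scalars, with Lemma~\ref{P+Pbasis} supplying the only move, and both invoke the infinitude of $\KK$ to pick a generic parameter when the procedure deadlocks because all remaining nonzero scalars coincide. The differences lie in the reduction scheme and are worth recording. The paper manufactures zeros one at a time via a three-move gadget on four scalars, $0,y_a,y_b,y_c \mapsto d, y_a+d,y_b,y_c \mapsto 0, y_a+d, y_b+d, y_c \mapsto 0,0,y_b+d,y_a+y_c+d$ with $d$ generic, and then finishes by cases on the number of zeros; your basic step is the single move $a,b \mapsto a+b,0$ merging two distinct nonzero scalars, which is leaner per step. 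Your observation that every move preserves $s=\sum_i y_i$ does not appear in the paper's proof and buys two things: it identifies the terminal scalar as $k=s+1$ (the paper leaves $k$ unspecified), and it makes the $n=2$ assertion $k\neq 1$ immediate rather than a separate check. This is also not an accident: $s$ is exactly the form invariant $\mathcal{I}_\beta$ of Definition~\ref{invariantdef} for these forms, whose $X$-matrix is the identity, so your invariance-under-moves is a special case of Theorem~\ref{invariant} (compare Lemma~\ref{fnon-isomorphic}, which uses precisely this quantity to separate the classes \ref{eqn:F}). One wording issue, not a gap: immediately after your ``spread'' move the nonzero count rises from $j$ to $j+1$, and a single basic step only returns it to $j$ --- indeed, applied to the freshly created pair $(a,c+a)$ it would undo the spread entirely --- so to get strictly below $j$ you need two basic steps played against leftover copies of $c$, which exist because the stall only needs resolving when $j\geq 3$. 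Since you, not an adversary, choose the moves, such choices are always available and your procedure terminates as claimed.
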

\begin{proof}

First of all, let us suppose that $n = 2$. Then, we have $\beta = \beta_P(y_1) + \beta_P(y_2)$. We can replace $y_1, y_2$ with $1, y_1 + y_2 + 1$ and let $k = y_1 + y_2 + 1 \neq 1$. 

Now, suppose that $n \geq 3$. If $n-1$ of the assigned scalars are zero and the remaining scalar is $1$, then we are done. If instead the remaining scalar is some $\lambda \neq 0 \in \KK$, then we can do the replacement $\lambda, 0 \mapsto 1, \lambda + 1$, and we are done again. If $n-2$ of the assigned scalars are zero and the remaining two are $\lambda, \mu \in \KK-\{0\}$, then we can do the replacement $\lambda, \mu \mapsto 1, \lambda + \mu + 1$ if $\lambda \neq \mu$. If $\lambda = \mu$, we can first do the replacement $0, \lambda \mapsto 1, \lambda + 1$, then do the replacement $\lambda + 1, \mu \mapsto 1, 0$ (converting the three assigned scalars $\lambda, \mu, 0$ into $1, 1, 0$). This covers the case where $n-2$ assigned scalars are zero.
\par
Therefore, let us assume that at most $n-3$ of the assigned scalars are zero. If no assigned scalars are zero, we can find $y_a$ and $y_b$ with $y_a \neq y_b$ and do the replacement $y_a, y_b \mapsto 0, y_a+y_b$. Hence, we can ensure that least one of the assigned scalars is zero. If $n=3$, this returns us to the case where $n-2$ assigned scalars are zero. When $n>3$, we can find three additional assigned scalars $y_a$, $y_b$, and $y_c$ with $y_a \neq 0$. We can then perform the following iterative procedure until we arrive at a form that has $n-2$ zeroes as assigned scalars. Let $d$ be a nonzero scalar satisfying $d \neq y_b$ and $d \neq y_a + y_c$. We can do the replacements
\[ 0,y_a,y_b,y_c \mapsto d, y_a + d, y_b, y_c \mapsto 0, y_a + d, y_b + d, y_c \mapsto 0, 0, y_b+d, y_c + y_a + d,\]
where the notation is extended with two assigned scalars replaced in each step. These replacements give us an additional zero as an assigned scalar. The above process can be repeated until we have $n-2$ zeroes as assigned scalars, which is a case we have already considered. This proves the lemma.
\end{proof}

We combine our previous work to get the following theorem.
\begin{thm}\label{wxforms}
Any non-degenerate symmetric bilinear form $\beta$ on $nP$ lies in the isomorphism class of one of the following types of forms:
\begin{align*}
    \beta_{2P;0}^n &= \frac{n}{2}\beta_{2P}(0) \ \  (2 \mid n) \\
    \beta_{2P;1}^n &= \beta_{2P}(1) + \frac{n-2}{2} \beta_{2P}(0) \ \  (2 \mid n; n>0)\\ 
    \beta_{y,1; 0} &\coloneqq \beta_P(y) + \beta_P(1) \ \  (y \neq 1 \in \KK; n = 2) \\
    \beta_{y,1;n-2} &\coloneqq \beta_P(y) +\beta_P(1) + (n-2)\beta_P(0) \ \  (y \in \KK; n \geq 3) \\
    \beta_y^n &\coloneqq n\beta_P(y)\ \  (y \in \KK; n > 0).
\end{align*}
These forms are pairwise non-isomorphic, except some of the $\beta_{y,1;n-2}$ may represent the same isomorphism class for different $y$ (which we will see is not the case in Lemma \ref{fnon-isomorphic}).
\end{thm}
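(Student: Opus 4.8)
The plan is to prove the two assertions—that every non-degenerate symmetric bilinear form on $nP$ is isomorphic to one on the list, and that (up to the flagged ambiguity) the listed forms are pairwise non-isomorphic—by assembling the structural lemmas for the first assertion and producing a hierarchy of isomorphism invariants for the second.

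For existence I would simply chain the lemmas already established. By Lemma \ref{splitp} any such $\beta$ is isomorphic either to a sum $\sum_j \beta_{2P}(a_j)$ or to a sum $\sum_i \beta_P(y_i)$. In the first case Lemma \ref{4Pbasis} collapses it to $\beta_{2P;0}^n$ or $\beta_{2P;1}^n$. In the second case I split on whether the assigned scalars can be taken all equal: if so we are looking at $\beta_y^n = n\beta_P(y)$; if not, Lemma \ref{Pforms} rewrites $\beta$ as $\beta_P(k) + \beta_P(1) + (n-2)\beta_P(0)$, giving $\beta_{y,1;0}$ when $n = 2$ (with $y \neq 1$) and $\beta_{y,1;n-2}$ when $n \geq 3$. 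This exhausts the five families.

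The non-isomorphism statement I would attack from coarse to fine. The coarsest invariant is the oscillating property: each $\beta_{2P}(a_j)$ is oscillating and a direct sum of oscillating forms is oscillating (the cross terms in the sum form vanish), whereas $\beta_P(y)$ is never oscillating since $\beta_P(y)(p, t.p) = 1$; hence the two $2P$-families are isolated from the three $P$-families. Within the $2P$-families, super-alternation separates $\beta_{2P;0}^n$ from $\beta_{2P;1}^n$ exactly as in Lemma \ref{4Pbasis}. To pull the $P$-families apart I would introduce the invariant of being \emph{homogeneous with ratio $y$}, meaning $\beta(u, u) = y\,\beta(u, t.u)$ for all $u$; a direct computation shows $\beta_y^n$ is homogeneous with ratio precisely $y$, so that $y$ is an isomorphism invariant and the $\beta_y^n$ are mutually distinct, while $\beta_{y,1;0}$ and $\beta_{y,1;n-2}$ fail homogeneity for the stated parameter ranges and are thereby separated from every $\beta_y^n$.

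The genuinely delicate point—and the step I expect to be the main obstacle—is distinguishing the forms $\beta_{y,1;\bullet}$ from one another as $y$ varies, since the homogeneity invariant is blind here. The conceptual reason is that these forms differ only in the ``diagonal'' data $d_i = \beta(w_i, w_i)$ taken relative to a basis with $\beta(w_i, t.w_j) = \delta_{ij}$ (an orthonormal basis for the auxiliary non-degenerate form $\bar\gamma(\bar u, \bar v) := \beta(u, t.v)$ on $nP/\ker t$), and the admissible changes of basis act on the vector $d$ through a Frobenius twist of the orthogonal group $O(\bar\gamma)$; the resulting orbit problem in characteristic $2$ is exactly the subtle one the text alludes to. For $n = 2$ the single scalar $\sigma = \sum_i \beta(w_i, w_i) = y + 1$ is already an orbit invariant and settles $\beta_{y,1;0}$, but for $n \geq 3$ a finer orbit analysis is required; accordingly I would isolate that case and defer it to Lemma \ref{fnon-isomorphic}, which is precisely where the excerpt promises the remaining collisions within $\beta_{y,1;n-2}$ are ruled out.
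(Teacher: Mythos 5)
Your proposal is correct and takes essentially the same route as the paper: existence by chaining Lemmas \ref{splitp}, \ref{4Pbasis}, and \ref{Pforms}, and non-isomorphism by the oscillating and super-alternating properties together with the homogeneity relation $\beta(u,u) = y\,\beta(u,t.u)$ (the paper's good-pair criterion), with the separation of the $\beta_{y,1;n-2}$ for distinct $y$ deferred to Lemma \ref{fnon-isomorphic}. Your closing orbit-theoretic discussion (including the $n=2$ claim about $\sum_i \beta(w_i,w_i)$) is unproven but harmless, since the theorem statement itself defers exactly that point.
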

\begin{proof}
This follows from Lemma ~\ref{splitp}, Lemma ~\ref{4Pbasis}, and Lemma ~\ref{Pforms}. To see that we have distinct isomorphism classes, we will observe some properties about the forms. The first form is oscillating and super-alternating. The second form is not super-alternating but is oscillating. The remaining forms are not oscillating. Notice that $y\beta_y^n(u, t.u) = \beta^n_y(u, u)$ for all $u \in nP$, whereas for no $y \in \KK$ does there exist $z \in \KK$ such that $z\beta_{y,1;n-2}(u, u) = \beta_{y,1;n-2}(u, t.u)$ for all $u \in nP$. Therefore, we deduce that the $\beta_y^n$ are pairwise non-isomorphic and not isomorphic to anything else on the list. This proves the claim.
\end{proof}

\subsection{Classifying non-degenerate bilinear forms in the general case}
We now have classifications for the non-degenerate symmetric bilinear forms on objects of the form $m\un$ (Theorem ~\ref{vforms}) and for those on objects of the form $nP$ (Theorem ~\ref{wxforms}). In this section, we will use these results to provide the classification for any object $U \in \Ver_4^+$ with decomposition $U = m \un \oplus nP = V \oplus W \oplus X$ arising from the basis given by ~\eqref{standard_basis}.

\begin{lem}\label{1D}
Let $\beta$ be a non-degenerate symmetric bilinear form on $U \in \Ver_4^+$, and suppose that $U = V \oplus V^\perp$, where $V \cong m\un$, $V^\perp \cong nP$, and $\beta|_V = \alpha_1^m$. Then, either $\beta = \alpha^m_1 + n\beta_{2P,0}^n$ or $\beta = \alpha^m_1 + n\beta_{0}^n$.
\end{lem}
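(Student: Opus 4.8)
The plan is to exploit the single structural feature that the hypothesis $\beta|_V=\alpha_1^m$ contributes, namely an \emph{anisotropic} vector: since (for $m\ge 1$) the form $\alpha_1^m$ supplies a vector $v\in V\subseteq\ker t$ with $\beta(v,v)=1$. In characteristic $2$ over the perfect field $\KK$, such a vector is exactly what is needed to erase ``diagonal'' contributions, because $1+1=0$ and every scalar has a square root. I would begin by invoking Lemma~\ref{splitp} to write the non-degenerate form $\beta|_{V^\perp}$ on $V^\perp\cong nP$ as either $\sum_{i}\beta_P(y_i)$ or $\sum_{j}\beta_{2P}(a_j)$, and then treat the two cases by showing that, after adding $\alpha_1^m$, each collapses to a single normal form.

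The heart of the argument is two elementary base changes, each using the vector $v$ and each chosen to preserve the $A$-module structure (so as to be a genuine isomorphism in $\Ver_4^+$, i.e.\ to commute with $t$). For the first case, on a single summand with basis $\{w,t.w\}$ realizing $\beta_P(y)$, I would replace $w$ by $w+\sqrt{y}\,v$; since $t.v=0$ this preserves $t.w$, and since $\beta(v,v)=1$ it forces $\beta(w+\sqrt{y}v,\,w+\sqrt{y}v)=y+y=0$, turning $\beta_P(y)$ into $\beta_P(0)$. The price is that $v$ is no longer orthogonal to the summand, so I would then correct $v$ by an element of $\im t$, replacing $v$ by $v+\sqrt{y}\,(t.w)$; because $t.(t.w)=0$ this correction stays in $\ker t$ and leaves $\beta(v,v)=1$, while restoring $\beta(v,\,w+\sqrt{y}v)=0$. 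Crucially, the corrected $v$ is again anisotropic and orthogonal to everything else, so the \emph{same} vector can be recycled to convert every $\beta_P(y_i)$ in turn, yielding $\alpha_1^m+\beta_0^n$. For the second case, by Lemma~\ref{4Pbasis} it suffices to handle $\beta_{2P}(1)$; here the analogous moves are $p\mapsto p+v$ (which annihilates the offending entry $\beta(p,p)=1$) followed by the correction $v\mapsto v+t.q$ (an element of $\im t$, hence in $\ker t$), after which the block structure of \eqref{2p_form0} reappears and $\beta_{2P}(1)$ becomes $\beta_{2P}(0)$, giving $\alpha_1^m+\beta_{2P;0}^n$.

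Assembling the two cases via Proposition~\ref{nondegenperp} gives exactly the two claimed normal forms: the $\beta_P$-case produces $\beta=\alpha_1^m+\beta_0^n$ and the $\beta_{2P}$-case produces $\beta=\alpha_1^m+\beta_{2P;0}^n$. I expect the main obstacle to be purely in the bookkeeping of these re-orthogonalizations: after $v$ is used to kill a diagonal entry it acquires spurious pairings, and one must verify both that the compensating correction can always be drawn from $\im t$ (so that the base-change map stays in $\Ver_4^+$ and the freshly corrected $v$ remains a unit vector orthogonal to the remaining summands) and that the resulting endomorphism of $U$ is invertible. Checking these conditions summand by summand, and confirming that a single anisotropic vector suffices for all $n$ summands, is the only genuinely delicate part of the proof.
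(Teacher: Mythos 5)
Your proposal is correct and takes essentially the same approach as the paper: after invoking Lemma~\ref{splitp}, the paper also uses the anisotropic vector from $\alpha_1^m$ to convert $\beta_P(y)$ into $\beta_P(0)$ and $\beta_{2P}(1)$ into $\beta_{2P}(0)$, and its one-step basis changes ($u_3 = u_1 + \sqrt{y}\,t.u_2$, $u_4 = \sqrt{y}\,u_1 + u_2$ in the first case; $u_4 = u_1 + t.u_3$, $u_5 = u_1 + u_2$ in the second) are exactly the composites of your two-step ``kill the diagonal entry, then re-orthogonalize $v$ by an element of $\im t$'' moves. The only cosmetic difference is that you reduce the second case to a single $\beta_{2P}(1)$ summand via Lemma~\ref{4Pbasis} first, whereas the paper iterates the conversion over every summand with $a_j = 1$.
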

\begin{proof}
By Lemma ~\ref{splitp}, we know that $\beta$ is either in the same isomorphism class as
\begin{align*}
    \alpha_1^m + \sum_{i=1}^n \beta_P(y_i)
\end{align*}
or
\begin{align*}
    \alpha_1^m + \sum_{j=1}^{n/2} \beta_{2P}(a_j).
\end{align*}
Let us deal with the former case first. We claim that 
\[\alpha_1^1 + \beta_P(y_i) = \alpha_1^1 + \beta_P(0) \]
for all values of $y_i$. The associated matrix of the left-hand side is given by
\begin{equation*}
\bbordermatrix{& u_1& u_2 & t.u_2\cr
             & 1& &  \cr
             &  & y_i &1 \cr
             & & 1&0\cr}
\end{equation*}
in some suitable basis $\{u_1, u_2, t.u_2\}$. Let $u_3=u_1+\sqrt{y_i}t.u_2$ and $u_4=\sqrt{y_i}u_1+u_2$. Then, we can see that
\begin{itemize}
    \item $\beta(u_3,u_3)=1$,
    \item $\beta(u_3,u_4)=\sqrt{y_i}+\sqrt{y_i}=0$,
    \item $\beta(u_3,t.u_4)=0$,
    \item $\beta(u_4,u_4)=y_i+y_i=0$,
    \item $\beta(u_4,t.u_4)=1$, and
    \item the space spanned by $u_3$ is perpendicular to the space spanned by $\{u_4,t.u_4\}$.
\end{itemize}
In the basis $\{u_3, u_4, t.u_4\}$, the associated matrix is given by
\begin{equation*}
\bbordermatrix{&u_3 & u_4 &t.u_4 \cr
             & 1& &  \cr
             &  & 0&1 \cr
             & & 1&0\cr},
\end{equation*}
which shows the claim. Since $m > 0$, after iterating this procedure for each $i$, we see that 
\[\beta = \alpha_1^m + \sum_{i=1}^n \beta_P(y_i) = \alpha_1^m + n\beta_{0}.\]
Now, let us move to the second case, where 
\begin{align*}
    \beta = \alpha_1^m + \sum_{j=1}^{n/2} \beta_{2P}(a_j).
\end{align*}
We want to show that $\beta = \alpha_1^m + n\beta_{2P,0}$; this will follow if we can show that 

\[\alpha_1^1 + \beta_{2P}(1) = \alpha_1^1 + \beta_{2P}(0).\]
In other words, we need to find a change of basis so that we can go from the first matrix below to the second matrix below:
\begin{equation} \notag\bbordermatrix{&u_1&u_2&t.u_2&u_3&t.u_3\cr
		  &1&&&& \cr
       & & 1 & 0 & 0 & 1\cr  	
		  & & 0 & 0 & 1 & 0\cr
		  & & 0 & 1 & 0 & 0\cr  	
		& & 1 & 0 & 0 & 0\cr
		}
\rightarrow
\bbordermatrix{&u_4&u_5&t.u_5&u_6&t.u_6\cr
		  &1&&&& \cr
       & & 0 & 0 & 0 & 1\cr  	
		  & & 0 & 0 & 1 & 0\cr
		  & & 0 & 1 & 0 & 0\cr  	
		& & 1 & 0 & 0 & 0\cr
		}.
\end{equation}
Such a basis change is given by letting $u_4 = u_1 + t.u_3$, $u_5 = u_1 + u_2$, and $u_6 = u_3$. Iterating this for each value of $j$ such that $a_j = 1$ proves the second case.
\end{proof}
Using the previous lemma and our classifications on $V \cong m\un$ and $V^\perp \cong nP$, we obtain a classification of the non-degenerate symmetric bilinear forms on an object $U \cong m\un\oplus nP$. 

\begin{thm} \label{6forms}
Let $U\cong m\un\oplus nP$. Any non-degenerate symmetric bilinear form $\beta$ on $U$ is equivalent to one of the forms below:

\begin{equation}\label{eqn:A}\tag{A}
        \alpha_1^m + \frac{n}{2} \beta_{2P}(0)
 \ \  (m>0;2\mid n)
\end{equation} 

\begin{equation}\label{eqn:B}\tag{B}
        \alpha_1^m + n \beta_{P}(0)
 \ \  (m>0;n>0)
\end{equation}

\begin{equation}\label{eqn:C}\tag{C}
    \alpha_2^m + \frac{n}{2} \beta_{2P}(0)
 \ \  (2\mid m;2\mid n)
\end{equation} 

\begin{equation}\label{eqn:D}\tag{D}
    \alpha_2^m + \beta_{2P}(1) + \frac{n-2}{2} \beta_{2P}(0)
 \ \  (2\mid m;2\mid n;n\ge 2)
\end{equation} 

\begin{equation}\label{eqn:E}\tag{E}
        \alpha_2^m + n \beta_{P}(y)
 \ \  (y \in \KK; 2\mid m;n>0)
\end{equation} 

\begin{equation}\label{eqn:F}\tag{F}
    \alpha_2^m + (n-2)\beta_{P}(0) + \beta_P(1) + \beta_P(y)
 \ \ \left(\begin{aligned}&y \in \KK;\\&2\mid m;n\ge 2; \\ &(1+y,n)\neq (0,2) \end{aligned}\right)
\end{equation}
\end{thm}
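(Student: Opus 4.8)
The plan is to reduce the general object $U \cong m\un \oplus nP$ to the two cases already resolved—forms on $m\un$ by Theorem~\ref{vforms} and forms on $nP$ by Theorem~\ref{wxforms}—by peeling off an orthogonal copy of $m\un$. First I would apply Proposition~\ref{splitvfromu} to guarantee that $\beta|_V$ is non-degenerate, where $V \cong m\un$, and then Proposition~\ref{nondegenperp} to obtain an orthogonal decomposition $U = V \oplus V^\perp$ with $V^\perp \cong nP$ in which both restrictions are non-degenerate. This exhibits $\beta$ as a sum $\beta|_V + \beta|_{V^\perp}$ of a form on $m\un$ and a form on $nP$.

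The decisive step is a case split on the isomorphism type of $\beta|_V$, which by Theorem~\ref{vforms} is either $\alpha_1^m$ or (when $m$ is even) $\alpha_2^m$. Suppose first that $\beta|_V = \alpha_1^m$ with $m > 0$. Here all the work has been packaged into Lemma~\ref{1D}: the anisotropic summand $\alpha_1$ absorbs the scalar parameters of every $nP$-form, so the five families of Theorem~\ref{wxforms} collapse to just two, namely $\alpha_1^m + \tfrac{n}{2}\beta_{2P}(0)$ and $\alpha_1^m + n\beta_P(0)$. These are exactly \eqref{eqn:A} and \eqref{eqn:B}.

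Now suppose $\beta|_V = \alpha_2^m$; this forces $2 \mid m$ and also subsumes the boundary case $m = 0$, where $\alpha_2^0$ is the empty form and $\beta$ is a form on $nP$ alone. Here I would simply apply Theorem~\ref{wxforms} to $\beta|_{V^\perp}$ and add $\alpha_2^m$, matching the five families term-by-term: $\alpha_2^m + \beta_{2P;0}^n$ gives \eqref{eqn:C}, $\alpha_2^m + \beta_{2P;1}^n$ gives \eqref{eqn:D}, $\alpha_2^m + \beta_y^n$ gives \eqref{eqn:E}, and the two families $\alpha_2^m + \beta_{y,1;0}$ (for $n = 2$, $y \neq 1$) and $\alpha_2^m + \beta_{y,1;n-2}$ (for $n \geq 3$) together assemble into \eqref{eqn:F}. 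The side-condition $(1+y,n) \neq (0,2)$ precisely encodes the requirement $y \neq 1$ inherited from $\beta_{y,1;0}$, which is active only when $n = 2$. Unlike $\alpha_1$, the hyperbolic form $\alpha_2$ is super-alternating and cannot absorb the diagonal scalars, which is why no collapsing occurs; but for the present surjectivity statement this distinctness is not needed.

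I expect the only real obstacle to be careful bookkeeping rather than any new idea: one must check that the parity and size conditions ($2 \mid m$, $2 \mid n$, $n > 0$, $n \geq 2$) inherited from Theorems~\ref{vforms} and~\ref{wxforms} line up with those attached to \eqref{eqn:A}--\eqref{eqn:F}, and correctly handle the degenerate endpoints. In particular, $n = 0$ makes $V^\perp = 0$ so that $\beta = \beta|_V$ is \eqref{eqn:A} or \eqref{eqn:C}, while $m = 0$ collapses the $\alpha_1/\alpha_2$ dichotomy and recovers the pure $nP$-forms of Theorem~\ref{wxforms} as the $m = 0$ instances of \eqref{eqn:C}--\eqref{eqn:F}. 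Since the theorem asserts only that every form is equivalent to one on the list, the pairwise non-isomorphism of the six families is left to the subsequent analysis.
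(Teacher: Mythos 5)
Your proposal is correct and takes essentially the same approach as the paper: decompose $U = V \oplus V^\perp$ via Propositions~\ref{splitvfromu} and~\ref{nondegenperp}, then case-split on whether $\beta|_V$ is $\alpha_1^m$ (where Lemma~\ref{1D} collapses everything to \eqref{eqn:A} or \eqref{eqn:B}) or $\alpha_2^m$ (where Theorem~\ref{wxforms} yields \eqref{eqn:C} through \eqref{eqn:F}). Your extra bookkeeping of the boundary cases $m=0$, $n=0$ and of the side condition $(1+y,n)\neq(0,2)$ is consistent with, and slightly more explicit than, the paper's own proof.
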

\begin{proof}
Write $U = V \oplus V^\perp$ for some $V \cong  m\un$. If the restriction of $\beta$ to $V$ decomposes as $\alpha_1^m$, then Lemma ~\ref{1D} shows that $\beta$ is either in the isomorphism class ~\ref{eqn:A} or the isomorphism class ~\ref{eqn:B}. Otherwise, Theorem ~\ref{wxforms} gives a form belonging to one of the isomorphism classes ~\ref{eqn:C} through F. Since all alternating bilinear forms are symmetric, we have also classified all non-degenerate alternating bilinear forms on objects in $\ver_4^+$ (we will specify which forms are alternating in Theorem ~\ref{non-isomorphic}). In the next subsection, we will prove that forms in these isomorphism classes are pairwise non-isomorphic.
\end{proof}

\subsection{Proving Non-Isomorphism}
We start by describing basis-invariant properties of non-degenerate symmetric bilinear forms on objects $U \in \ver_4^+$. 

\begin{Def}
    Given a symmetric bilinear form $\beta$ on $U$, we define a \textit{good pair} as an ordered pair of scalars $(k, l) \in \KK^2$ satisfying $k\beta(u, t.u) = l\beta(u, u)$ for all $u \in U$.
\end{Def}

\begin{prop}\label{property}
    Let $\beta$ be a symmetric bilinear form on $U$, and let $k, \ell$ be scalars in $\KK$. The set of vectors $u \in U$ that satisfy $k\beta(u, t.u) = l\beta(u, u)$ is a subspace of $U$.
\end{prop}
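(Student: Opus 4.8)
The plan is to show that the set
\[
S_{k,\ell} \coloneqq \{u \in U \mid k\beta(u, t.u) = \ell\beta(u, u)\}
\]
is closed under scalar multiplication and under addition, and that it contains zero; together these give that it is a subspace. The zero vector and scalar closure are immediate: for $\lambda \in \KK$, both $\beta(\lambda u, t.(\lambda u)) = \lambda^2 \beta(u, t.u)$ and $\beta(\lambda u, \lambda u) = \lambda^2 \beta(u, u)$ pick up the same factor $\lambda^2$, so $u \in S_{k,\ell}$ forces $\lambda u \in S_{k,\ell}$. Hence the only real content is closure under addition.

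For closure under addition, I would take $u, u' \in S_{k,\ell}$ and expand both $\beta(u + u', t.(u+u'))$ and $\beta(u+u', u+u')$ using bilinearity, then compare the resulting cross-terms. Using symmetry of $\beta$ (Lemma~\ref{symcondition}, so that $\beta(a,b) = \beta(b,a)$ for all $a,b$) together with the identity $\beta(t.a, b) = \beta(a, t.b)$ valid for every bilinear form in $\Ver_4^+$, the cross-terms in $\beta(u+u', u+u')$ collapse to $2\beta(u,u') = 0$ in characteristic $2$, so $\beta(u+u', u+u') = \beta(u,u) + \beta(u',u')$. The quadratic piece is the genuine obstacle: expanding $\beta(u+u', t.u + t.u')$ yields the diagonal terms $\beta(u,t.u) + \beta(u', t.u')$ plus the cross-terms $\beta(u, t.u') + \beta(u', t.u)$, and I must argue these cross-terms vanish. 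Here I would invoke $\beta(u', t.u) = \beta(t.u', u) = \beta(u, t.u')$, using symmetry and the $t$-shift identity, so the two cross-terms are \emph{equal}, and in characteristic $2$ their sum is zero. Thus $\beta(u+u', t.(u+u')) = \beta(u, t.u) + \beta(u', t.u')$ as well.

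Combining the two collapses, the defining relation is additive:
\[
k\beta(u+u', t.(u+u')) - \ell\beta(u+u', u+u') = \bigl(k\beta(u,t.u) - \ell\beta(u,u)\bigr) + \bigl(k\beta(u',t.u') - \ell\beta(u',u')\bigr),
\]
and since each summand on the right vanishes by hypothesis, so does the left-hand side, giving $u + u' \in S_{k,\ell}$. The main thing to get right is the cross-term cancellation, and the point worth emphasizing is that it relies on characteristic $2$ (so that $x + x = 0$) rather than on any deeper structural feature of $\Ver_4^+$; the symmetry of $\beta$ and the universal $t$-shift identity $\beta(t.u, u') = \beta(u, t.u')$ do all the work. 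I would therefore structure the write-up as: (i) note $0 \in S_{k,\ell}$ and scalar closure via the $\lambda^2$ factor, (ii) establish the two additivity identities for $\beta(-,-)$ and $\beta(-, t.-)$ on sums using symmetry, the $t$-shift, and $\mathrm{char}\,\KK = 2$, and (iii) conclude additivity of the defining relation.
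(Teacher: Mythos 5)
Your proof is correct and follows essentially the same route as the paper's: expand bilinearly, identify the two cross-terms via the $t$-shift identity $\beta(t.u,u') = \beta(u,t.u')$ together with symmetry, and kill them using characteristic $2$, with scalar closure coming from the common factor $\lambda^2$. The only cosmetic difference is that the paper runs both expansions in a single chain of equalities (writing the vanishing cross-terms as $2k\beta(u_1,t.u_2) = 2\ell\beta(u_1,u_2)$, both zero), whereas you treat the two additivity identities separately and make the characteristic-$2$ cancellation explicit.
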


\begin{proof}
If $u_1,u_2 \in U$ satisfy this equation, then so does their sum:
\begin{align*}
  k\beta(u_1+u_2,t.(u_1+u_2))&=k\beta(u_1+u_2,t.u_1+t.u_2) \\
  &=k\beta(u_1,t.u_1)+k\beta(u_2,t.u_2)+k\beta(u_1,t.u_2)+k\beta(u_2,t.u_1) \\
  &=\ell\beta(u_1,u_1)+\ell\beta(u_2,u_2)+2k\beta(u_1,t.u_2) \\
  &=\ell\beta(u_1,u_1)+\ell\beta(u_2,u_2)+2\ell\beta(u_1,u_2) \\
  &=\ell\beta(u_1+u_2,u_1+u_2).
\end{align*}
Moreover, for any scalar $\lambda$, 
\begin{align*}
k\beta(\lambda u_1,t.(\lambda u_1))=k\beta(\lambda u_1,\lambda t.u_1)=k\lambda ^2\beta(u_1,t.u_1)=\ell \lambda^2\beta(u_1,u_1)=\ell \beta(\lambda u_1,\lambda u_1).
\end{align*}
\end{proof}
In the case that a symmetric bilinear form has the good pair $(1, 0)$, we recover the definition of an oscillating bilinear form. In the case that a symmetric bilinear form has the good pair $(0, 1)$, we recover the definition of a super-alternating bilinear form. Alternating forms in our classification have additional invariant properties:

\begin{lem}\label{fdefined}
Let $\beta$ be a non-degenerate alternating bilinear form on $U$. Suppose $u_1, u_2 \in U$ such that $u_1 - u_2 \in \ker t$. Then, $\beta(u_1,u_1) = \beta(u_2,u_2)$.
\end{lem}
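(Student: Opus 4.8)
The plan is to reduce everything to a short bilinear expansion and then invoke the two characterizations of symmetric and alternating forms established earlier. Set $k \coloneqq u_1 - u_2$, so that $k \in \ker t$ by hypothesis, and write $u_2 = u_1 + k$ (the sign is irrelevant since we are in characteristic $2$). The goal is to compute $\beta(u_2, u_2)$ and compare it to $\beta(u_1, u_1)$.

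First I would expand using bilinearity:
\[
\beta(u_2, u_2) = \beta(u_1 + k, u_1 + k) = \beta(u_1, u_1) + \beta(u_1, k) + \beta(k, u_1) + \beta(k, k).
\]
The two cross terms combine into a single obstruction. By Lemma~\ref{symcondition}, $\beta$ is symmetric in the naive sense, so $\beta(u_1, k) = \beta(k, u_1)$; hence $\beta(u_1, k) + \beta(k, u_1) = 2\beta(u_1, k) = 0$ because we are working over a field of characteristic $2$. This disposes of the off-diagonal contribution without any condition on $k$.

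It then remains to kill the diagonal term $\beta(k, k)$. This is precisely where the alternating hypothesis enters: by Proposition~\ref{alternatingmaindef}, a symmetric bilinear form $\beta$ is alternating if and only if $\beta(u \otimes u) = 0$ for all $u \in \ker t$. Since $k = u_1 - u_2 \in \ker t$, we conclude $\beta(k, k) = 0$. Combining the two vanishing statements gives $\beta(u_2, u_2) = \beta(u_1, u_1)$, as desired.

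There is no genuine obstacle here; the only point requiring care is identifying the correct earlier result to invoke for the diagonal term. One might be tempted to argue that $\beta(k,k) = 0$ holds for every $k$ because $\beta$ is alternating, but in this category the alternating condition only forces vanishing on $\ker t$ (not the stronger super-alternating condition), so the hypothesis $u_1 - u_2 \in \ker t$ is exactly what is needed and must be used explicitly rather than discarded.
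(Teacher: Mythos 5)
Your proposal is correct and is essentially the paper's own argument: the paper expands $0 = \beta(u_1-u_2,\,u_1-u_2)$ (which vanishes by Proposition~\ref{alternatingmaindef} since $u_1-u_2 \in \ker t$) by bilinearity and cancels the cross terms using symmetry in characteristic $2$, which is exactly your computation written from the other end. Your explicit appeal to Lemma~\ref{symcondition} for the cross terms, and your closing remark that alternating in this category only forces vanishing on $\ker t$, are both accurate and consistent with the paper's reasoning.
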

\begin{proof}
By Proposition ~\ref{alternatingmaindef}, the fact that $u_1-u_2 \in \ker t$ implies $\beta(u_1-u_2, u_1-u_2)=0$. We have 
\begin{align*}
    0 &= \beta(u_1 - u_2, u_1 - u_2) \\
    &= \beta(u_1, u_1) - \beta(u_1, u_2) - \beta(u_2, u_1) + \beta(u_2, u_2)  \\
    &= \beta(u_1, u_1) + \beta(u_2, u_2),
\end{align*}
which means $\beta(u_1, u_1) = \beta(u_2, u_2)$, which shows the claim.
\end{proof}
This motivates the following definition:

\begin{Def}
    Let $\beta$ be a non-degenerate alternating bilinear form on $U$. The \emph{$X$-function} $f_\beta: X \to \KK$ of $\beta$ is defined by $f_\beta(x)=\beta(u,u)$, where $x \in X$ and $u \in U$ is in the preimage of $x$ under the map of the $t$-action.
\end{Def}
Observe that by Proposition \ref{bilform_calcs} $\beta|_{X \otimes \ker t}$ is the zero-map. This motivates another definition. 

\begin{Def}
    The restriction $\beta|_{X \otimes U}$ factors through a map $X \otimes U/\ker t \rightarrow \un$, which can be identified with a map $g_\beta: X \otimes X \rightarrow \un$. We call $g_\beta$ the \emph{$X$-form}  of $\beta$. Explicitly, this map is given by $g_\beta(x_1, x_2) \coloneqq \beta(x_1, u_2)$, where $u_2 \in U$ is any preimage of $x_2$ under the $t$-action.    
\end{Def}

\begin{prop}\label{gnondegen}
Let $\beta$ be a non-degenerate alternating bilinear form on $U$. The $X$-form $g_\beta$ of $\beta$ is non-degenerate, symmetric, and bilinear.
\end{prop}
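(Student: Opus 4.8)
The plan is to verify the three claimed properties in turn, the first two being essentially immediate and the third reducing to the non-degeneracy of $\beta$ itself. Bilinearity requires almost nothing: by construction $g_\beta$ is a $\KK$-linear map $X \otimes X \to \un$, and since $t$ acts as zero on both $X$ and $\un$, any such linear map automatically satisfies the $A$-module condition and hence is a morphism in $\Ver_4^+$. The only point needing care is well-definedness of the formula $g_\beta(x_1, x_2) = \beta(x_1, u_2)$ under the choice of preimage $u_2$ of $x_2$ under the $t$-action; this is exactly the observation preceding the definition, namely that $\beta|_{X \otimes \ker t} = 0$ by Lemma~\ref{bilform_calcs}, so replacing $u_2$ by another preimage $u_2'$ (which differs from $u_2$ by an element of $\ker t$) leaves the value unchanged.

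For symmetry, I would write $x_1 = t.u_1$ and $x_2 = t.u_2$ and use the identity $\beta(t.u, u') = \beta(u, t.u')$, which holds because $\beta$ is an $A$-module map. This gives $g_\beta(x_1, x_2) = \beta(t.u_1, u_2) = \beta(u_1, t.u_2) = \beta(u_1, x_2)$. Since $\beta$ is alternating, hence symmetric, Lemma~\ref{symcondition} yields $\beta(u_1, x_2) = \beta(x_2, u_1) = g_\beta(x_2, x_1)$, the last equality being the defining formula for $g_\beta$ applied with $u_1$ as a preimage of $x_1$. Thus $g_\beta(x_1 \otimes x_2) = g_\beta(x_2 \otimes x_1)$ for all $x_1, x_2 \in X$, and a second application of Lemma~\ref{symcondition}, this time to the form $g_\beta$ on the object $X$, shows that $g_\beta$ is symmetric.

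For non-degeneracy, suppose $x_1 \in X$ lies in the radical of $g_\beta$, i.e.\ $g_\beta(x_1, x) = 0$ for every $x \in X$. Given an arbitrary $u \in U$, set $x = t.u \in X$; then $0 = g_\beta(x_1, t.u) = \beta(x_1, u)$, so $\beta(x_1, u) = 0$ for all $u \in U$. Hence $x_1$ lies in the radical of $\beta$, which is zero since $\beta$ is non-degenerate, forcing $x_1 = 0$. As $\dim X = \dim X^*$, the injectivity of the adjunct map $g_\beta' : X \to X^*$ upgrades to an isomorphism, so $g_\beta$ is non-degenerate. The one place that demands attention throughout is the bookkeeping of preimages under the $t$-action and its compatibility with the adjunction conventions defining non-degeneracy; once well-definedness is secured via $\beta|_{X \otimes \ker t} = 0$, each step is a short computation, and the crux of the argument is simply the recognition that the radical of $g_\beta$ embeds into the radical of $\beta$.
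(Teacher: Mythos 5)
Your proposal is correct and follows essentially the same route as the paper: the heart of both arguments is that a vector in the radical of $g_\beta$ pairs to zero under $\beta$ with all of $U$ (since $X = \im t$, every $u \in U$ has $t.u \in X$), contradicting the non-degeneracy of $\beta$. The paper simply phrases this by contradiction and dismisses bilinearity and symmetry as routine, whereas you spell out the well-definedness via $\beta|_{X \otimes \ker t} = 0$ and the symmetry via $\beta(t.u,u') = \beta(u,t.u')$ together with Lemma~\ref{symcondition} — the same checks the paper leaves implicit.
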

\begin{proof}
First, suppose for the sake of contradiction that $g_\beta$ is degenerate. Then, there exists a vector $x\in X$ such that $g(x,x')=0$ for all $x' \in X$. Thus, for any vector $u'$ such that $t.u'\in X$, $\beta(x,u')=0$. However, $X$ is the image of $U$ under the $t$-action, so $\beta(x,u')=0$ for all $u' \in U$, which is impossible because $\beta$ is non-degenerate. The map is obviously bilinear and easily checked to be symmetric.
\end{proof}

\begin{Def}
    Let $\beta$ be a non-degenerate alternating bilinear form on $U$. Given a basis of $X$, the \emph{$X$-matrix} of $\beta$ is the associated matrix of the $X$-form of $\beta$.
\end{Def}

Because the $X$-form is non-degenerate for any non-degenerate alternating bilinear form, we know that the $X$-matrix is always invertible. Next, we introduce the basis-invariant notion of the \textit{form invariant} to distinguish between isomorphism classes of forms.

\begin{Def}\label{invariantdef}
Suppose that $\beta$ is a non-degenerate alternating bilinear form on $U$. Let $\{\chi_1, \dots, \chi_n\}$ be a basis of $X$, and denote the $X$-matrix of $\beta$ with respect to this basis by $M$. The \textit{form invariant} of $\mathcal{I}_\beta$ of $\beta$ is the sum $\sum_{i=1}^n f_\beta(\chi_i)(M^{-1})_{ii}$.
\end{Def}

\begin{rem}\label{invariant_nP}
    Let $\eta$ be a non-degenerate alternating bilinear form on an object $R \in \Ver_4^+$ with decomposition $R = p\un \oplus qP$. The formula for $\mathcal{I}_\eta$ depends only on the restriction of $\eta$ to $qP$, so $\mathcal{I}_{\eta}=\mathcal{I}_{\eta|_{qP}}$. 
\end{rem}

\begin{thm}\label{invariant}
Let $\beta$ be a non-degenerate alternating bilinear form on $U$. The form invariant of $\beta$ is basis-invariant.
\end{thm}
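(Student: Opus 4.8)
The plan is to show that $\mathcal{I}_\beta$, despite the basis-dependent appearance of Definition \ref{invariantdef}, equals a single intrinsic scalar attached to the pair $(f_\beta, g_\beta)$, namely $g_\beta(z,z)$ for a canonically determined vector $z \in X$. Since $f_\beta$ and $g_\beta$ are defined without reference to any basis (only $X$ and the $t$-action enter), producing such an intrinsic description of $\mathcal{I}_\beta$ immediately yields its basis-invariance.

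The first step is to upgrade the homogeneity of $f_\beta$ to additivity. Using that $f_\beta$ is well-defined (Lemma \ref{fdefined}), I would check that for $x,x' \in X$ with $t$-preimages $u,u'$, the vector $u+u'$ is a preimage of $x+x'$, so $f_\beta(x+x') = \beta(u+u',u+u') = f_\beta(x)+f_\beta(x')+\beta(u,u')+\beta(u',u)$, and the two cross terms cancel because $\beta$ is symmetric (Lemma \ref{symcondition}) and we are in characteristic $2$. Together with $f_\beta(\lambda x)=\lambda^2 f_\beta(x)$, this shows $f_\beta$ is additive and Frobenius-semilinear. Because $\KK$ is algebraically closed of characteristic $2$, the Frobenius $\lambda \mapsto \lambda^2$ is a bijection, so I can define $\tilde f_\beta := \sqrt{f_\beta}$ by taking the unique square root value-wise. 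The identities $\tilde f_\beta(\lambda x)=\lambda\,\tilde f_\beta(x)$ and $\tilde f_\beta(x+x')=\tilde f_\beta(x)+\tilde f_\beta(x')$ (the latter by squaring both sides and invoking the freshman's dream) show that $\tilde f_\beta \in X^*$ is an \emph{honest} linear functional.

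Next, since $g_\beta$ is non-degenerate, symmetric, and bilinear (Proposition \ref{gnondegen}), the map $X \to X^*$, $z \mapsto g_\beta(z,-)$, is an isomorphism, so there is a unique $z \in X$ with $g_\beta(z,-) = \tilde f_\beta$. I then claim $\mathcal{I}_\beta = g_\beta(z,z)$. Fixing a basis $\{\chi_i\}$ of $X$ with $X$-matrix $M$ and writing $\tilde f_i = \tilde f_\beta(\chi_i)$, the defining relation becomes $M\mathbf{z} = \tilde{\mathbf{f}}$, so $\mathbf{z} = M^{-1}\tilde{\mathbf{f}}$ and hence $g_\beta(z,z) = \mathbf{z}^T M \mathbf{z} = \tilde{\mathbf{f}}^T M^{-1}\tilde{\mathbf{f}} = \sum_{i,l}\tilde f_i\tilde f_l (M^{-1})_{il}$. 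The off-diagonal terms cancel in pairs, since $M^{-1}$ is symmetric and $2=0$, leaving $\sum_i \tilde f_i^{\,2}(M^{-1})_{ii} = \sum_i f_\beta(\chi_i)(M^{-1})_{ii} = \mathcal{I}_\beta$. As $z$ and $g_\beta$ are basis-free, this proves the theorem.

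The conceptual heart, and the step most easily overlooked, is the passage to the linear square root $\tilde f_\beta$. A direct change-of-basis computation is awkward precisely because $f_\beta$ transforms by the entrywise square (Frobenius) of the transition matrix while $M^{-1}$ transforms by the plain transition matrix, so the two contributions to $\mathcal{I}_\beta$ do not obviously recombine. Extracting the square root reconciles these two transformation laws and rewrites $\mathcal{I}_\beta$ as the manifestly invariant pairing $g_\beta(z,z)$.
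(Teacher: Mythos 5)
Your proposal is correct, but it takes a genuinely different route from the paper. The paper proves invariance head-on: it computes how the two ingredients of $\mathcal{I}_\beta$ transform under an invertible change of basis $A$ of $X$ --- the $X$-matrix as $M \mapsto A^\top M A$ and the $X$-function as $f(Ax_i) = \sum_j A_{ji}^2 f(x_j)$, the cross terms dying by symmetry of $\beta$ and characteristic $2$ --- and then verifies by direct matrix algebra, using $M^{-1} = A(A^\top M A)^{-1}A^\top$ and the pairwise cancellation of symmetric off-diagonal terms, that $\sum_i f(x_i)(M^{-1})_{ii}$ is unchanged. You instead produce an intrinsic description of the invariant: the valuewise square root of the $2$-semilinear function $f_\beta$ (legitimate because $\KK$ is algebraically closed, hence perfect, and additivity of $f_\beta$ follows from Lemma \ref{fdefined}, Lemma \ref{symcondition}, and characteristic $2$) is an honest linear functional $\tilde f_\beta \in X^*$, and the non-degeneracy of $g_\beta$ (Proposition \ref{gnondegen}) furnishes a canonical $z \in X$ with $g_\beta(z,-) = \tilde f_\beta$; a single fixed-basis computation, using the same characteristic-$2$ diagonal-extraction trick, then identifies $\mathcal{I}_\beta = g_\beta(z,z)$, which is manifestly basis-free. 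Your computation is sound: $M\mathbf{z} = \tilde{\mathbf{f}}$ does follow from symmetry of $M$, and the passage from $\tilde{\mathbf{f}}^\top M^{-1}\tilde{\mathbf{f}}$ to $\sum_i f_\beta(\chi_i)(M^{-1})_{ii}$ is exactly the cancellation the paper also exploits. What each approach buys: yours explains what the invariant \emph{is} --- the value of the $X$-form at the Riesz representative of $\sqrt{f_\beta}$ --- a conceptual gain that could also streamline the later invariant computations (Lemma \ref{invariantsum}, Proposition \ref{invariantlist}); the paper's is a self-contained, elementary matrix verification that never needs to extract square roots, though both arguments ultimately rest on the same key fact that symmetric off-diagonal pairs vanish in characteristic $2$.
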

\begin{proof}
Denote the $X$-function and $X$-form of $\beta$ by $f$ and $g$, respectively, and with respect to the basis $\{x_1, x_2, \dots, x_m\}$ of $X$, define $M$ to be the $X$-matrix of $\beta$. Given an invertible linear transformation $A: X \to X$, we want to show that when evaluated on the basis $\{Ax_1,Ax_2,\dots ,Ax_n\}$, the form invariant remains unchanged. First, we show that the associated matrix of $g$ with respect to this basis is $A^\top MA$. Using the property that $g$ is bilinear, we can rewrite each entry of this associated matrix as follows: $$g(Ax_i,Ax_j)=\underset{1\le k,\ell\le n}{\sum}A_{ki}A_{\ell j}g(x_k,x_\ell)=\underset{1\le k,\ell\le n}{\sum}A_{ki}A_{\ell j}M_{k\ell}=\underset{1\le k,\ell\le n}{\sum}A^\top_{ik}M_{k\ell}A_{\ell j}=(A^\top MA)_{ij}.$$ 
Additionally, we have $$f(Ax_i)=\beta\left(\sum_{j=1}^nA_{ji}w_j,\sum_{k=1}^nA_{ki}w_k\right)=\sum_{j=1}^n\sum_{k=1}^nA_{ji}A_{ki}\beta(w_j,w_k).$$ For each pair $(a, b)$ where $1 \leq a,b \leq n$, we have $A_{ai}A_{bi}\beta(w_a,w_b)=A_{bi}A_{ai}\beta(w_b,w_a)$, which implies $A_{ai}A_{bi}\beta(w_a,w_b)+A_{bi}A_{ai}\beta(w_b,w_a)=0$ in characteristic $2$. Therefore, we can simplify $f(Ax_i)$ to $$\sum_{j=1}^nA_{ji}^2\beta(w_j,w_j)=\sum_{j=1}^nA_{ji}^2f(x_j).$$

We want to prove $$\sum_{i=1}^nf(x_i)(M^{-1})_{ii}=\sum_{i=1}^n\sum_{j=1}^nA_{ji}^2f(x_j)(A^\top MA)^{-1}_{ii},$$ and it suffices to show that $$(M^{-1})_{ii}=\sum_{k=1}^nA_{ik}^2(A^\top MA)_{kk}^{-1}.$$ The matrix $M^{-1}$ can be written as $A(A^\top MA)^{-1}A^\top$. Thus, 
\begin{equation*}
    M^{-1}_{ii} = \sum_{1\le j,k\le n}A_{ij} (A^\top MA)^{-1}_{jk} A^\top_{ki}=\sum_{1\le j,k\le n}A_{ij}A_{ik}(A^\top MA)^{-1}_{jk}.
\end{equation*} Since $A^\top MA$ is symmetric, $(A^\top MA)^{-1}$ must also be symmetric. 

For each pair $(a,b)$ where $1 \leq a,b\leq n$, we have $A_{ia}A_{ib}(A^\top MA)^{-1}_{ab}=A_{ib}A_{ia}(A^\top MA)^{-1}_{ba}$, which means that $A_{ia}A_{ib}(A^\top MA)^{-1}_{ab}+A_{ib}A_{ia}(A^\top MA)^{-1}_{ba}=0$. Then, $$\sum_{1\le j,k\le n}A_{ij}A_{ik}(A^\top MA)^{-1}_{jk}=\underset{1 \le k \le n}{\sum}A_{ik}A_{ik}(A^\top MA)^{-1}_{kk}=\sum_{k=1}^nA_{ik}^2(A^\top MA)_{kk}^{-1},$$ as desired.
\end{proof}

We are now ready to prove non-isomorphism.

\begin{lem}\label{fnon-isomorphic}
For all $a, b \in \KK$, forms in the class \ref{eqn:F}($1+a$) and forms in the isomorphism class \ref{eqn:F}($1+b$) are isomorphic only if $a=b$.
\end{lem}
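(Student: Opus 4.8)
The plan is to use the form invariant $\mathcal{I}_\beta$ of Definition~\ref{invariantdef}, which by Theorem~\ref{invariant} does not depend on the chosen basis of $X$ and is therefore constant on isomorphism classes of non-degenerate alternating forms. Every form in the family~\ref{eqn:F} is alternating: its restriction to the $nP$ summand is automatically alternating (any symmetric form on $nP$ is), and $\alpha_2^m$ is a sum of hyperbolic planes so it vanishes on the diagonal, hence is alternating on $m\un$; a sum of alternating forms is alternating. Thus $\mathcal{I}_\beta$ is defined for these forms, and it suffices to exhibit a value of $\mathcal{I}$ that distinguishes \ref{eqn:F}($1+a$) from \ref{eqn:F}($1+b$) when $a \neq b$.

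Concretely, I would compute $\mathcal{I}_\beta$ for $\beta = \alpha_2^m + (n-2)\beta_P(0) + \beta_P(1) + \beta_P(y)$. By Remark~\ref{invariant_nP} the invariant only sees the $nP$ part, so $\mathcal{I}_\beta = \mathcal{I}_{\beta|_{nP}}$ and the $\alpha_2^m$ block may be discarded. For a single summand $\beta_P(c)$ with basis $\{w, x = t.w\}$, the $X$-function takes the value $f(x) = \beta(w,w) = c$ while the $X$-form gives $g(x,x) = \beta(x,w) = 1$, so the corresponding $1\times 1$ block of the $X$-matrix is $[1]$. Since the summands are orthogonal, the full $X$-matrix is $M = I_n$ and the diagonal values of the $X$-function are the assigned scalars $0$ (with multiplicity $n-2$), $1$, and $y$. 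Plugging into Definition~\ref{invariantdef} yields
\[
\mathcal{I}_\beta = \sum_{i=1}^{n} f(\chi_i)\,(M^{-1})_{ii} = (n-2)\cdot 0 + 1 + y = 1 + y.
\]

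Writing $y = 1 + a$ and working in characteristic $2$ gives $1 + y = 1 + (1+a) = a$, so the form invariant of \ref{eqn:F}($1+a$) is exactly $a$. Consequently, if \ref{eqn:F}($1+a$) and \ref{eqn:F}($1+b$) were isomorphic, Theorem~\ref{invariant} would force their form invariants to agree, giving $a = b$; this is precisely the asserted implication.

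I expect no serious obstacle here beyond bookkeeping. The only points needing care are (i) verifying that the $X$-matrix is genuinely the identity, i.e.\ that the $X$-form has no cross terms between distinct $P$-summands, which is immediate from orthogonality of the direct summands, and (ii) invoking Remark~\ref{invariant_nP} to drop the $\alpha_2^m$ block. The real work underpinning the argument is Theorem~\ref{invariant} (basis-invariance of $\mathcal{I}_\beta$); once that is in hand, the non-isomorphism follows from the single computation $\mathcal{I} = a$ above.
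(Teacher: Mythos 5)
Your proof is correct and takes essentially the same route as the paper: both arguments compute the form invariant of a form in family \ref{eqn:F} in the standard basis (the $X$-matrix is the identity, so $\mathcal{I}_\beta$ is the sum of the assigned scalars, namely $1+y$ for parameter $y$) and then invoke the basis-invariance from Theorem~\ref{invariant} to conclude $a=b$. The only discrepancy is notational: the paper reads \ref{eqn:F}($1+a$) as the class with $\beta_P$-parameter $y=a$ (hence $\mathcal{I}_\beta = 1+a$), whereas you read it as $y=1+a$ (hence $\mathcal{I}_\beta = a$); under either convention the invariant separates the classes and the asserted implication follows.
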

Let $\beta$ be a form in F($1+a$). We will use the basis given by ~\eqref{standard_basis} to represent the associated matrix of $\beta$ in Theorem \ref{6forms}. With respect to the basis $\{x_1, x_2, \dots, x_n\}$, the $X$-matrix $M$ of $\beta$ is the identity matrix $I_n$. Then, $\sum_{i=1}^nf(x_i)(M^{-1})_{ii}=\sum_{i=1}^nf(x_i)$, which is the sum of the diagonal entries of $M$. The form invariant of $\beta$ thus evaluates to $\mathcal{I}_\beta=1+a$. Since $1+a = 1+b$ only if $a=b$, this proves the lemma.

\begin{thm}\label{non-isomorphic}
The forms described in Theorem ~\ref{6forms} are pairwise non-isomorphic. 
\end{thm}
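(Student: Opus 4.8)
The plan is to isolate two isomorphism invariants of a non-degenerate symmetric bilinear form $\beta$ on $U\cong m\un\oplus nP$ that are fine enough to separate the six families, and then to invoke Lemma~\ref{fnon-isomorphic} for the one remaining continuous parameter. First I would note that $m,n$ are themselves invariants of $U$, so two isomorphic forms must live on the same object; hence it suffices to show that, for fixed $(m,n)$, any two entries of the list in Theorem~\ref{6forms} that are allowed for that $(m,n)$ are non-isomorphic. The first invariant I would use is simply whether $\beta$ is alternating: by Proposition~\ref{alternatingmaindef} this depends only on the values $\beta(u,u)$ for $u\in\ker t$, which is intrinsic, and using Lemma~\ref{bilform_calcs} (so that $\beta|_{X\otimes X}=0$) one checks that $\beta$ is alternating exactly when its restriction to the canonical quotient $\ker t/\im t\cong m\un$ is of type $\alpha_2$. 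Thus families \ref{eqn:A} and \ref{eqn:B} (which contain $\alpha_1^m$ with $m>0$) are non-alternating, while families \ref{eqn:C}, \ref{eqn:D}, \ref{eqn:E}, \ref{eqn:F} (which contain $\alpha_2^m$) are alternating; this separates the pair \ref{eqn:A}, \ref{eqn:B} from the remaining group.

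The second, and main, invariant is the set of good pairs
\[G(\beta)\;\coloneqq\;\{(k,\ell)\in\KK^2 : k\beta(u,t.u)=\ell\beta(u,u)\ \text{for all}\ u\in U\}.\]
Since the defining condition is linear in $(k,\ell)$, the set $G(\beta)$ is a linear subspace of $\KK^2$ (dual to the subspace appearing in Proposition~\ref{property}), and because any isomorphism of forms is an $A$-module map commuting with $t$, the subspace $G(\beta)$ is an isomorphism invariant. I would then compute $G(\beta)$ for each family. Every form in Theorem~\ref{6forms} is an orthogonal direct sum of the standard pieces $\alpha_i^m$, $\beta_{2P}(i)$, $\beta_P(y)$, and $t$ preserves each summand, so the good-pair condition decouples over the summands; the computation reduces to the elementary identities $\beta(u,u)=ya^2$ and $\beta(u,t.u)=a^2$ on $\beta_P(y)$ (writing $u=ap+b\,t.p$), together with the vanishing of both quantities on $\alpha_2^m$ and $\beta_{2P}(0)$. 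The upshot I expect is $G=\KK^2$ for \ref{eqn:C}; $G=\langle(1,0)\rangle$ for \ref{eqn:A} and \ref{eqn:D}; $G=\langle(y,1)\rangle$ for \ref{eqn:E} with parameter $y$; and $G=0$ for \ref{eqn:B} and \ref{eqn:F}.

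Reading off these values finishes almost everything. Among the non-alternating forms, $G$ separates \ref{eqn:A} ($\dim 1$) from \ref{eqn:B} ($\dim 0$). Among the alternating forms, \ref{eqn:C} is the unique one with $G=\KK^2$, while \ref{eqn:D} and each \ref{eqn:E}$(y)$ have $\dim G=1$ but are distinguished by the actual line: $(1,0)\in G$ for \ref{eqn:D} yet $(1,0)\notin\langle(y,1)\rangle$, and $\langle(y,1)\rangle=\langle(y',1)\rangle$ forces $y=y'$, so the $\beta_P(y)$-parameter of \ref{eqn:E} is recovered from $G$. Finally \ref{eqn:F} is the unique alternating family with $G=0$; here the hypothesis $(1+y,n)\neq(0,2)$ is exactly what prevents $G$ from jumping to dimension $1$ (in the borderline case $n=2,\ y=1$ the form \ref{eqn:F} would in fact collapse to \ref{eqn:E}$(1)$, with $G=\langle(1,1)\rangle$, which is why that case is excluded). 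The only distinction invisible to $G$ and to the alternating property is between two members of \ref{eqn:F} with different parameter values, and this is precisely the content of Lemma~\ref{fnon-isomorphic}, which separates them through the form invariant $\mathcal{I}_\beta$. The main obstacle is keeping the $G$-computation genuinely basis-free: one must ensure that $G(\beta)$ is read off intrinsically rather than from a chosen decomposition, which is why I would stress the invariance argument above and handle the cross terms between orthogonal summands with care (they vanish, but this uses orthogonality of the decomposition together with symmetry in characteristic $2$).
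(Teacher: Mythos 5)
Your proposal is correct and follows essentially the same route as the paper's own proof: the alternating property (Proposition~\ref{alternatingfirstdef}) separates \ref{eqn:A}, \ref{eqn:B} from \ref{eqn:C}--\ref{eqn:F}, the good pairs of Proposition~\ref{property} (computed to be $\KK^2$ for \ref{eqn:C}, $\langle(1,0)\rangle$ for \ref{eqn:A} and \ref{eqn:D}, $\langle(a,1)\rangle$ for \ref{eqn:E}$(a)$, and $0$ for \ref{eqn:B} and \ref{eqn:F}) distinguish the remaining classes, and Lemma~\ref{fnon-isomorphic} handles the parameter in family \ref{eqn:F}. Your added emphasis on the basis-invariance of the good-pair subspace and the decoupling over orthogonal summands only makes explicit what the paper uses implicitly.
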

\begin{proof}
By Proposition ~\ref{alternatingfirstdef}, the alternating bilinear forms in our classification are those that vanish on $v_j \otimes v_j$ for $1 \leq j \leq m$. We deduce that forms in the isomorphism classes ~\ref{eqn:A} and ~\ref{eqn:B} are not alternating, while forms of the remaining four classes are. Thus, forms in ~\ref{eqn:A} and ~\ref{eqn:B} are not isomorphic to forms in the other classes.

By Proposition ~\ref{property}, we can determine the good pairs of forms in our classification by examining the properties of vectors in a basis of $U$. Forms belonging to ~\ref{eqn:B} and \ref{eqn:F} have a single good pair $(0, 0)$, whereas the good pairs of forms in ~\ref{eqn:A} and ~\ref{eqn:D} are $(k, 0)$ for all scalars $k$, Forms in E($a$) where $a \in \KK$ have the good pairs $(ka, k)$ for all scalars $k$. For all $k, \ell \in \KK, u \in U$, $k\beta(u, t.u) = \ell\beta(u, u) = 0$ for all forms $\beta$ in ~\ref{eqn:C}. Therefore, forms in \ref{eqn:C} have the good pair $(k, \ell)$ for all scalars $k, \ell$.

We can use the criterion of distinct good pairs to conclude that forms in ~\ref{eqn:A} and ~\ref{eqn:B} are not isomorphic and forms belonging to the classes ~\ref{eqn:C}, ~\ref{eqn:D}, \ref{eqn:E}, and \ref{eqn:F} are pairwise non-isomorphic. Finally, we proved in Lemma ~\ref{fnon-isomorphic} that the forms in F$(1+a)$ and forms in F$(1+b)$ with $a \neq b \in \KK$ are distinct.
\end{proof}

We finish this section with calculating the form invariants of the forms described by ~\ref{eqn:C}, ~\ref{eqn:D}, and \ref{eqn:E}. This information becomes useful in the next section, where we determine the sums and products of bilinear forms described by our isomorphism classes.
\begin{prop}\label{invariantlist}
The form invariants of forms in ~\ref{eqn:C} and ~\ref{eqn:D} are zero, and for $a \in \KK$, the form invariant of forms in \ref{eqn:E}($a$) is $na$.
\end{prop}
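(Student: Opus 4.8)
The plan is to reduce the computation of $\mathcal{I}$ for each of the three families to the three elementary building blocks $\beta_P(y)$, $\beta_{2P}(0)$, and $\beta_{2P}(1)$, and then to assemble the answer using additivity of the form invariant under orthogonal direct sums. By Remark~\ref{invariant_nP}, $\mathcal{I}_\beta$ depends only on the restriction of $\beta$ to the $nP$ summand, so the $\alpha_2^m$ part of each of \ref{eqn:C}, \ref{eqn:D}, and \ref{eqn:E} may be discarded at the outset, and it suffices to compute the invariant of the summand supported on $nP$.

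First I would record that $\mathcal{I}$ is additive. If $\beta = \beta_1 + \beta_2$ is an orthogonal sum on $U_1 \oplus U_2$, then $X = X_1 \oplus X_2$, and for $x^{(1)} \in X_1$ and $x^{(2)} \in X_2$ the $X$-form satisfies $g_\beta(x^{(1)}, x^{(2)}) = \beta(x^{(1)}, u^{(2)}) = 0$, since a preimage $u^{(2)}$ of $x^{(2)}$ under the $t$-action lies in $U_2$ and the two summands are orthogonal. Hence the $X$-matrix $M$ is block diagonal with blocks $M_1, M_2$, so $M^{-1}$ is block diagonal with blocks $M_1^{-1}, M_2^{-1}$; since $f_\beta$ restricts to $f_{\beta_i}$ on each $X_i$, summing over the diagonal gives $\mathcal{I}_\beta = \mathcal{I}_{\beta_1} + \mathcal{I}_{\beta_2}$.

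Next I would compute the three blocks directly from their defining matrices in Lemmas~\ref{p_forms} and~\ref{2p_forms}. For $\beta_P(y)$ in the basis $\{p, t.p\}$, the space $X$ is spanned by $x = t.p$, so $f(x) = \beta(p,p) = y$ while the $1 \times 1$ $X$-matrix is $M = [\beta(t.p, p)] = [1]$; thus $\mathcal{I}_{\beta_P(y)} = y$. For both $\beta_{2P}(0)$ and $\beta_{2P}(1)$ in the basis $\{p, t.p, q, t.q\}$, the $X$-basis is $\{t.p, t.q\}$ and in either case the $X$-matrix is the anti-diagonal matrix $\left[\begin{smallmatrix} 0 & 1 \\ 1 & 0 \end{smallmatrix}\right]$, because the $(p,p)$-entry, which is the only difference between the two forms, never enters the $X$-form. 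This matrix is its own inverse and has vanishing diagonal, so $\sum_i f(\chi_i)(M^{-1})_{ii} = 0$ regardless of the values of $f$; hence $\mathcal{I}_{\beta_{2P}(0)} = \mathcal{I}_{\beta_{2P}(1)} = 0$.

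Assembling these via additivity yields the claim: \ref{eqn:C} gives $\tfrac{n}{2}\cdot 0 = 0$, \ref{eqn:D} gives $0 + \tfrac{n-2}{2}\cdot 0 = 0$, and \ref{eqn:E}($a$) gives $n \cdot a = na$. I do not expect a genuine obstacle; the only points requiring care are the verification of block-diagonality in the additivity step and keeping the basis ordering consistent with the matrix conventions of the two lemmas. The conceptual heart is simply the observation that the $X$-matrix of each $\beta_{2P}(i)$ is anti-diagonal, so its inverse has zero diagonal and annihilates all dependence on the diagonal $X$-function values.
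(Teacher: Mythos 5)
Your proof is correct and follows essentially the same route as the paper's: both rest on the observations that the $X$-matrix of each $\beta_P(a)$ block is $[1]$ with $X$-function value $a$ (giving $na$ for \ref{eqn:E}($a$)), and that the $X$-matrix of each $\beta_{2P}(i)$ block is the self-inverse anti-diagonal $2\times 2$ matrix, whose zero diagonal kills all contributions for \ref{eqn:C} and \ref{eqn:D}. The only organizational difference is that you explicitly prove additivity of $\mathcal{I}$ under orthogonal sums and compute block-by-block — in effect anticipating the paper's later Lemma \ref{invariantsum} — whereas the paper computes the full $X$-matrix of each form in the standard basis at once; the underlying block structure is identical, so this is a matter of presentation, not substance.
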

\begin{proof}
Suppose $\beta$ is a non-degenerate symmetric bilinear form in E($a$). Again, we use the basis given by ~\eqref{standard_basis} to represent the associated matrix of $\beta$ in Theorem \ref{6forms}. The $X$-matrix of $\beta$ with respect to this basis is the identity matrix $I_n$, and for $1\le i\le n$,  $f_\beta(x_i)=a$. The form invariant of $\beta$ evaluates to $\mathcal{I}_\beta=na$.

Now, suppose $\beta$ is a form in ~\ref{eqn:C} or ~\ref{eqn:D}.  With respect to the same basis, the $X$-matrix of $\beta$, which we will once again denote $M$, is direct sums of the $2\times 2$ matrix given by \[\begin{bmatrix} 0&1\\1&0\end{bmatrix}.\] Since $M$ is its own inverse, $M^{-1}_{ii}=0$ for $1\le i\le n$. Thus, $\mathcal{I}_\beta=0$.
\end{proof}

\subsection{Classification of Non-Degenerate Quadratic Forms}
Having classified the non-degenerate symmetric forms, we can now classify the isomorphism classes non-degenerate quadratic forms in $\Ver_4^+$. As with symmetric forms, when we say two quadratic forms are equal, we always mean up to equivalence.
\par
First, let us recall the classification of non-degenerate quadratic forms on arbitrary vector spaces over $\KK$. If $T$ is a two-dimensional vector space spanned by $\{v, w\}$, then we have a single non-degenerate quadratic form up to isomorphism, and it is given by $q(av + bw) = ab$. Let us denote this form by $\mathbb{H}$ (as it is known as a \textit{hyperbolic plane}). The following theorem is well-known (see  in \cite[Chapter 2]{elman2008algebraic}):

\begin{thm}\label{quadratic_form_ordinary_classification}
Let $q$ be a non-degenerate quadratic form on a vector space $V$. Then, $\dim V$ is even and $q = \tfrac{\dim V}{2}\mathbb{H}$.
\end{thm}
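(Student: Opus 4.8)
The plan is to argue by induction on $\dim V$, using the associated alternating bilinear form $\beta_q$ both to pin down the parity of the dimension and to split off hyperbolic planes. First I would record that in characteristic $2$ the form $\beta_q(v,w) = q(v+w) - q(v) - q(w)$ is alternating: since $q(\lambda v) = \lambda^2 q(v)$ gives $q(0)=0$ and $2v = 0$, we get $\beta_q(v,v) = q(2v) - 2q(v) = 0$. By the definition of non-degeneracy for quadratic forms, $q$ being non-degenerate means precisely that $\beta_q$ is non-degenerate. A non-degenerate alternating bilinear form has even rank equal to $\dim V$, so $\dim V$ is even; this settles the parity claim and supplies the base case $\dim V = 0$ (the empty orthogonal sum $0 \cdot \mathbb{H}$).

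For the inductive step I want to produce a hyperbolic plane that is an orthogonal direct summand relative to $\beta_q$. The crucial sub-step is finding a nonzero isotropic vector $e$, i.e. with $q(e)=0$. Pick any $v \neq 0$; by non-degeneracy of $\beta_q$ choose $w$ with $\beta_q(v,w)=1$, so that on $\langle v,w\rangle$ one has $q(av+bw) = a^2 q(v) + ab + b^2 q(w)$. If $q(v)=0$ take $e=v$; otherwise, setting $b=1$ reduces matters to solving $q(v)\,a^2 + a + q(w) = 0$, a degree-two polynomial in $a$ which has a root because $\KK$ is algebraically closed. This is the single essential use of algebraic closure, and it is exactly where characteristic $2$ bites: the usual ``complete the square'' argument is unavailable because the diagonal of $\beta_q$ vanishes. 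I therefore expect this isotropic-vector step to be the main subtlety, even though the underlying algebra is elementary.

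Having found an isotropic $e$, I would complete it to a hyperbolic pair: choose $f$ with $\beta_q(e,f)=1$ (possible since $\beta_q$ is non-degenerate), and then replace $f$ by $f' := f + q(f)\,e$. Because $q(f + \lambda e) = q(f) + \lambda^2 q(e) + \lambda\,\beta_q(e,f) = q(f) + \lambda$, the choice $\lambda = q(f)$ forces $q(f') = 0$ while leaving $\beta_q(e,f')=1$ intact. Then $q(ae + bf') = ab$, so $\langle e, f'\rangle$ is a copy of $\mathbb{H}$, and its Gram matrix for $\beta_q$ is $\left[\begin{smallmatrix} 0 & 1 \\ 1 & 0\end{smallmatrix}\right]$, hence invertible.

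Finally I would split the plane off. Writing $W = \langle e,f'\rangle^{\perp}$ for the orthogonal complement with respect to $\beta_q$, non-degeneracy of $\beta_q$ on the plane yields $V = \langle e,f'\rangle \oplus W$ with $\beta_q|_W$ non-degenerate. Since $\beta_q\big(\langle e,f'\rangle, W\big) = 0$, for $u \in \langle e,f'\rangle$ and $x \in W$ we have $q(u+x) = q(u) + q(x)$, so $q = q|_{\langle e,f'\rangle} \perp q|_W = \mathbb{H} \perp q|_W$ with $q|_W$ again non-degenerate. By the inductive hypothesis $q|_W \cong \tfrac{\dim W}{2}\mathbb{H}$, and therefore $q \cong \tfrac{\dim V}{2}\mathbb{H}$, completing the induction. (That any two non-degenerate forms of the same even dimension are isomorphic is then immediate, since each is a sum of hyperbolic planes.)
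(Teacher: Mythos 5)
Your proof is correct, but note that the paper itself gives no proof of Theorem~\ref{quadratic_form_ordinary_classification}: it is quoted as well-known, with a citation to \cite[Chapter 2]{elman2008algebraic}, so there is no in-paper argument to compare against; your write-up makes this step of the paper self-contained. Your route is the standard hyperbolic-plane induction, and all the characteristic-$2$ pressure points are handled correctly: $\beta_q$ is alternating because $\beta_q(v,v)=q(2v)-2q(v)=0$, so symplectic linear algebra forces $\dim V$ even; algebraic closure of $\KK$ enters exactly once, to solve $q(v)a^2+a+q(w)=0$ and produce an isotropic vector (this is genuinely necessary --- over a non-closed field of characteristic $2$ the classification acquires an Arf invariant, so no argument avoiding this step could exist); and the substitution $f'=f+q(f)e$, using $q(f+\lambda e)=q(f)+\lambda$, is the correct replacement for completing the square, which is unavailable since the diagonal of $\beta_q$ vanishes. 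Two points you leave implicit but which are immediate: the vector $e=a_0v+w$ is nonzero because $\beta_q(v,w)=1$ and $\beta_q(v,v)=0$ force $w\notin\KK v$; and the non-degeneracy convention you invoke (non-degeneracy of $q$ meaning non-degeneracy of $\beta_q$) is precisely the convention the paper fixes at the end of \S\ref{char2considerations}, so your statement and the paper's coincide. With those two remarks added, your argument could stand verbatim as a proof of the theorem.
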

Now, we are ready to turn to the classification of non-degenerate quadratic forms in $\Ver_4^+$.  

\begin{lem}
Let $q$ be a non-degenerate quadratic form on $U \in \Ver_4^+$ such that $U = m\un \oplus nP$. Then $m$ is necessarily even and $q$ is equivalent to the quadratic form $\tfrac{m}{2}\mathbb{H} + q_\gamma$, where $\gamma$ is a bilinear form on $nP$ given by those in Theorem \ref{wxforms} and where $q_\gamma$ is the quadratic form associated to $\gamma$ (recalling Corollary \ref{frobtwist0}).
\end{lem}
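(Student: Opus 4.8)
The plan is to reduce the classification of non-degenerate quadratic forms to that of non-degenerate symmetric bilinear forms through the associated form $\beta_q$, and then to split $q$ along a $\beta_q$-orthogonal decomposition. First I would record that $\beta_q$ is alternating: by construction it factors as $U \otimes U \twoheadrightarrow \mathbf{A}^2(U) \cong A^2(U) \xrightarrow{q} \un$, and since $\mathbf{A}^2(U) \cong (U \otimes U)/\ker(1 - c_{U,U}) = (U \otimes U)/\Gamma^2(U)$ by Proposition~\ref{char2_exterior_powers_same}, the form $\beta_q$ factors through $(U \otimes U)/\Gamma^2(U)$. By hypothesis $\beta_q$ is non-degenerate, so I may feed it into the structure theory of symmetric bilinear forms developed above.

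Next I would build the orthogonal splitting. Choosing $V \cong m\un$ to be the trivial part of the fixed decomposition $U = V \oplus W \oplus X$ of \eqref{standard_basis}, Proposition~\ref{splitvfromu} shows $\beta_q|_V$ is non-degenerate, and Proposition~\ref{nondegenperp} then gives $U = V \oplus V^\perp$ with $\beta_q|_{V^\perp}$ non-degenerate; by Krull--Schmidt $V^\perp \cong nP$. The key step is to promote this $\beta_q$-orthogonal splitting to a decomposition of $q$ itself, namely $q = q|_V + q|_{V^\perp}$. Under $\Gamma^2(U) \cong \Gamma^2(V) \oplus (V \otimes V^\perp) \oplus \Gamma^2(V^\perp)$ it suffices to show $q$ vanishes on the middle summand. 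For $v \in V$ (so $t.v = 0$) and $u \in V^\perp$ the braiding reduces to the ordinary flip, $c_{U,U}(v \otimes u) = u \otimes v$, so the corresponding divided-power element is $v \otimes u + u \otimes v = (1 - c_{U,U})(v \otimes u) \in A^2(U)$. On $A^2(U)$ the form $q$ agrees with $\beta_q$ under the identification of Proposition~\ref{char2_exterior_powers_same}, whence $q(v \otimes u + u \otimes v) = \beta_q(v, u) = 0$ by orthogonality of $V$ and $V^\perp$. Thus $q$ kills the cross term and the sum decomposition holds.

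It remains to identify the two summands. By Proposition~\ref{restrict_beta_q} we have $\beta_{q|_V} = \beta_q|_V$ and $\beta_{q|_{V^\perp}} = \beta_q|_{V^\perp}$, both non-degenerate, so $q|_V$ and $q|_{V^\perp}$ are non-degenerate quadratic forms. Since $V \cong m\un$ carries the trivial $t$-action, it is an ordinary vector space and Theorem~\ref{quadratic_form_ordinary_classification} forces $m$ to be even and yields $q|_V = \tfrac{m}{2}\mathbb{H}$. On $V^\perp \cong nP$ the Frobenius twist vanishes (Corollary~\ref{frobtwist0}), so quadratic and symmetric bilinear forms are in bijection; hence $q|_{V^\perp} = q_\gamma$ for the non-degenerate symmetric bilinear form $\gamma := \beta_q|_{V^\perp}$, which is one of the forms listed in Theorem~\ref{wxforms}. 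Combining the two identifications gives $q = \tfrac{m}{2}\mathbb{H} + q_\gamma$, as claimed.

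The main obstacle is the middle paragraph: verifying that the $\beta_q$-orthogonal splitting genuinely splits the \emph{quadratic} form and is not merely a splitting of $\beta_q$. The crux is that the cross term of $\Gamma^2(U)$ lands in the alternating part $A^2(U)$, where $q$ is completely controlled by $\beta_q$; the simplification $t.v = 0$ for $v \in V$ ensures the braided symmetrization on these elements coincides with the naive one, so the identification of the middle summand $V \otimes V^\perp$ with the span of the $v \otimes u + u \otimes v$ is unproblematic and the vanishing follows directly from orthogonality.
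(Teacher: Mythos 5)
Your proof is correct and takes essentially the same route as the paper's: orthogonally split $U = V \oplus V^\perp$ with respect to $\beta_q$ using Propositions \ref{splitvfromu} and \ref{nondegenperp}, promote this to a splitting $q = q|_V + q|_{V^\perp}$, and then apply Theorem \ref{quadratic_form_ordinary_classification} to $q|_V$ and Theorem \ref{wxforms} together with Corollary \ref{frobtwist0} to $q|_{V^\perp}$. The only divergence is that where the paper justifies $q = q|_V + q|_{V^\perp}$ by a terse appeal to Proposition \ref{restrict_beta_q}, you verify directly that the cross term $V \otimes V^\perp \subseteq \Gamma^2(U)$ consists of elements $v \otimes u + u \otimes v = (1-c_{U,U})(v\otimes u)$ lying in $A^2(U)$, on which $q$ computes $\beta_q(v,u) = 0$ by orthogonality --- a careful elaboration of exactly the step the paper glosses over.
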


\begin{proof}
Because the radical of $\beta_q$ is non-degenerate, we can choose a decomposition $U = V \oplus V^\perp$ so that $\beta_q$ is one of the forms in Theorem \ref{6forms}, where $V \cong m\un$ and $\beta_q = \beta_q|_V + \beta_q|_{V^\perp}$. Then, it follows by Proposition \ref{restrict_beta_q} that $q = q|_V + q|_{V^\perp}$. Now, we can apply Theorem \ref{quadratic_form_ordinary_classification} and Theorem \ref{wxforms}, respectively, to $q|_V$ and $q|_{V^\perp}$ to get the desired result. The existence of an isomorphism would contradict Theorem \ref{6forms}, so we know they are pairwise non-isomorphic.
\end{proof}

\subsection{Further Directions}
Having classified the non-degenerate symmetric bilinear forms and non-degenerate quadratic forms in $\Ver_4^+$, there are some natural further directions to consider. Let us call a group scheme in $\Ver_4^+$ preserving a symmetric bilinear form a symplectic group, and the group scheme preserving a quadratic form an orthogonal group.
\par
The first question is to see what the right notion of a non-degenerate quadratic form is. By tweaking the definition of non-degeneracy for a quadratic form, we will get different orthogonal groups that stabilize them. Ideally, these groups would be simple or close to being simple. Can we use this as a guiding principle to find what a non-degenerate quadratic form should be?
\par
In the case that a form is just on the object $nP$, then the symplectic and orthogonal group coincide because $\wedge^2(nP) = S^2(nP) \cong \Gamma^2(nP)$. How do these groups vary if we change the form on $nP$? If we change parameters, do groups remain isomorphic? Similar questions can be asked for the corresponding Lie algebras.
\par
Finally, forms on $nP$ appear to be analogous to periplectic forms on spaces of superdimension $n|n$ in characteristic zero. Do the Lie algebras that preserve this form lift to the periplectic Lie superalgebra $\mathfrak{p}(n)$ in characteristic zero (or perhaps over the $2$-adic field $\mathbb{Z}_2[\sqrt{2}]$)? A simple first step would be to check if dimensions match.

\section{Witt Semi-Ring Structure}\label{witt}
In this section, we describe the structure of the Witt semi-ring of isomorphism classes of non-degenerate symmetric bilinear forms in $\Ver_4^+$ (see \S \ref{forms_operations}). Our results are provided in the table at the end of each subsection. As a set, the elements of the Witt semi-ring are the isomorphism classes of the non-degenerate symmetric bilinear forms described in Theorem $~\ref{6forms}$.
\par

Throughout this section, we let $\beta$ and $\eta$ denote non-degenerate symmetric bilinear forms on objects $U, R \in \ver_4^+$, respectively. We fix a basis of $U=m \un \oplus nP$ as given by \eqref{standard_basis}, and we fix a basis of $R=p \un\oplus qP$ by $$\{\nu_1,\nu_2, \dots, \nu_p, \omega_1, \chi_1, \dots, \omega_q,\chi_q\},$$ where $t.\nu_j=0$ for all $1\le j\le p$ and $t.\omega_k=\chi_k$ for all $1\le k\le q$. These bases are chosen so that $\beta$ and $\eta$ have associated matrices as described in Theorem \ref{6forms}. Given $\beta$ and $\eta$, we determine which isomorphism classes their sum and product belong to (denoted \ref{eqn:A} through \ref{eqn:F}, as labeled in Theorem ~\ref{6forms}). 

\subsection{Additive Structure}
In this section, we describe the invariant properties of $\beta + \eta$, which will enable us to classify the form up to isomorphism.

\begin{lem}
The good pairs of $\beta + \eta$ are the intersection of the good pairs of $\beta$ and the good pairs of $\eta$.
\end{lem}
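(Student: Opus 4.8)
Recall that a good pair of a symmetric bilinear form $\beta$ on $U$ is a pair $(k,\ell) \in \KK^2$ with $k\beta(u, t.u) = \ell\beta(u,u)$ for all $u \in U$. The statement asserts that $(k,\ell)$ is a good pair for $\beta + \eta$ (a form on $U \oplus R$) if and only if it is a good pair for both $\beta$ and $\eta$. The plan is to unwind the definition of the sum form on $U \oplus R$, reduce the good-pair condition on a general element to conditions on the two summands, and then exploit the fact that the cross terms vanish.

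**Main argument.**
First I would write a general element of $U \oplus R$ as $u \oplus r$ with $u \in U$, $r \in R$, and compute both sides of the good-pair equation using the definition of $\beta + \eta$. By the definition of the sum, $(\beta+\eta)(u \oplus r, u' \oplus r') = \beta(u, u') + \eta(r, r')$, since the cross terms $U \otimes R$ and $R \otimes U$ are sent to zero. Since the $t$-action on $U \oplus R$ is diagonal, $t.(u \oplus r) = (t.u) \oplus (t.r)$, and so
\begin{align*}
(\beta + \eta)\big(u \oplus r, t.(u \oplus r)\big) &= \beta(u, t.u) + \eta(r, t.r), \\
(\beta + \eta)\big(u \oplus r, u \oplus r\big) &= \beta(u, u) + \eta(r, r).
\end{align*}
Hence $(k,\ell)$ is a good pair for $\beta + \eta$ precisely when
\[
k\beta(u,t.u) + k\eta(r,t.r) = \ell\beta(u,u) + \ell\eta(r,r) \quad \text{for all } u \in U,\ r \in R.
\]

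**Reducing to the two summands.**
The forward-and-backward equivalence now follows by specialization. If $(k,\ell)$ is a good pair for both $\beta$ and $\eta$, then $k\beta(u,t.u) = \ell\beta(u,u)$ and $k\eta(r,t.r)=\ell\eta(r,r)$ individually, and adding these gives the displayed identity, so $(k,\ell)$ is good for $\beta+\eta$. Conversely, if $(k,\ell)$ is good for $\beta+\eta$, set $r = 0$ to recover $k\beta(u,t.u) = \ell\beta(u,u)$ for all $u$, so $(k,\ell)$ is good for $\beta$; symmetrically, setting $u = 0$ shows it is good for $\eta$. Thus the good pairs of $\beta + \eta$ are exactly the intersection of those of $\beta$ and of $\eta$.

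**Expected obstacle.**
There is no serious obstacle here; the only point requiring care is confirming that the cross terms genuinely drop out, i.e.\@ that evaluating the good-pair condition separately on the subobjects $U$ and $R$ is legitimate. This is precisely the content of Proposition~\ref{property}, which guarantees that the solution set of $k\beta(u,t.u) = \ell\beta(u,u)$ is a subspace; combined with the vanishing of the cross terms in the definition of $\beta+\eta$, the specialization argument above is fully justified.
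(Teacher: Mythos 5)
Your proof is correct and follows essentially the same route as the paper's: expand the good-pair condition on $u \oplus r$ using the vanishing of the cross terms, add the two equations for the forward direction, and specialize to $r = 0$ and $u = 0$ for the converse. The closing appeal to Proposition~\ref{property} is unnecessary (the specialization argument is already self-contained, since every element of $U \oplus R$ has the form $u \oplus r$), but it causes no harm.
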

\begin{proof}
    Let $k, \ell$ be scalars in $\KK$. If $k\beta(u, t.u) = \ell\beta(u, u)$ for all $u \in U$ and $k\eta(r, t.r) = \ell\eta(r,r)$ for all $r \in R$, 
    we have 
    \begin{align*}
        &k\beta(u, t.u) + k\eta(r, t.r) = \ell\beta(u, u) + \ell\eta(r, r) \\ \implies &k(\beta + \eta)(u + r, t.(u + r)) = \ell(\beta +\eta)(u + r,u + r).
    \end{align*}
    
    For the converse, we suppose $(k, \ell)$ is a good pair of $\beta + \eta$, meaning
    \begin{equation}\label{4.1}
        k(\beta +\eta)(u + r, t.(u + r)) = \ell(\beta +\eta)(u + r, u + r)
    \end{equation}
    for all $u, r \in U, R$. We have $\ell(\beta +\eta)(u + r, u + r) = \ell\beta(u, u) + \ell\eta(r,r)$, and the left-hand side of ~\eqref{4.1} evaluates to
    \begin{align*}
        k(\beta +\eta)(u + r, t.(u + r)) = k(\beta +\eta)(u + r, t.u + t.r) = k\beta(u, t.u) + k\eta(r, t.r).    
    \end{align*}
    Thus, we can rewrite ~\eqref{4.1} as
    \begin{equation*}
        k\beta(u, t.u) + k\eta(r, t.r) = \ell\beta(u, u) + \ell\eta(r, r).
    \end{equation*}
    Setting $r = 0$ in the equation above yields $k\beta(u, t.u) = \ell\beta(u, u)$, and setting $u = 0$ yields $k\eta(r, t.r) = \ell\eta(r, r)$.
\end{proof}

\begin{lem}\label{alternatingiff}
The sum $\beta + \eta$ is alternating if and only if both $\beta$ and $\eta$ are alternating.
\end{lem}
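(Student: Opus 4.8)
The plan is to prove the statement $\beta + \eta$ is alternating if and only if both $\beta$ and $\eta$ are alternating by invoking the characterization of alternating forms established in Proposition \ref{alternatingmaindef}, which says that a symmetric bilinear form is alternating precisely when it vanishes on $u \otimes u$ for all $u \in \ker t$. Since $\beta$ and $\eta$ are symmetric, so is $\beta + \eta$ (the sum of symmetric forms is symmetric, as noted in \S\ref{forms_operations}), and thus this criterion applies to all three forms. The key observation is the compatibility between $\ker t$ in the direct sum $U \oplus R$ and the individual kernels: a general vector of $U \oplus R$ has the form $u + r$ with $u \in U$ and $r \in R$, and since the $t$-action respects the direct sum decomposition, we have $u + r \in \ker t$ if and only if $u \in \ker t$ and $r \in \ker t$ separately.

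First I would unwind the definition of the sum using the formula $(\beta+\eta)(u + r, u + r) = \beta(u, u) + \eta(r, r)$, which holds because the cross terms $\beta(u,r)$ and $\eta(r,u)$ vanish by construction of the sum (the off-diagonal blocks $U \otimes R$ and $R \otimes U$ map to zero). For the reverse direction, I would assume both $\beta$ and $\eta$ are alternating; then for any $u + r \in \ker(t)$ on $U \oplus R$, we have $u \in \ker t$ and $r \in \ker t$, so $\beta(u,u) = 0$ and $\eta(r,r) = 0$ by Proposition \ref{alternatingmaindef}, giving $(\beta + \eta)(u+r, u+r) = 0$, and hence $\beta + \eta$ is alternating by the same proposition.

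For the forward direction, I would assume $\beta + \eta$ is alternating and recover that each summand is alternating. Given any $u \in \ker t \subseteq U$, the vector $u + 0 \in U \oplus R$ lies in $\ker t$, so $(\beta + \eta)(u, u) = \beta(u,u) = 0$, showing $\beta$ is alternating; the symmetric argument with $0 + r$ handles $\eta$. This uses the embedding of $\ker t$ for $U$ into $\ker t$ for $U \oplus R$ via the inclusion $U \hookrightarrow U \oplus R$.

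I do not expect any serious obstacle here, as the argument is essentially formal once Proposition \ref{alternatingmaindef} is in hand. The only point requiring mild care is confirming that the characterization via $\ker t$ (rather than the original categorical definition of alternating via $\Gamma^2$) is the right tool, and that the decomposition $\Gamma^2(U \oplus R) \cong \Gamma^2(U) \oplus (U \otimes R) \oplus \Gamma^2(R)$ noted in \S\ref{forms_operations} is consistent with the pointwise $\ker t$ description. Since Proposition \ref{alternatingmaindef} already reduces the alternating condition to a statement about vectors in $\ker t$, the cleanest route is to work entirely with that criterion and avoid re-deriving the $\Gamma^2$ decomposition.
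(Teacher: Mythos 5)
Your proof is correct and takes essentially the same route as the paper's: both arguments rest on the $\ker t$ characterization of alternating forms (Proposition \ref{alternatingmaindef}) together with the block formula $(\beta+\eta)(u+r,u+r)=\beta(u,u)+\eta(r,r)$, and the ``both alternating $\Rightarrow$ sum alternating'' direction is identical. The only difference is cosmetic: for the converse you restrict directly to vectors of the form $u+0$ and $0+r$, whereas the paper argues the contrapositive by exhibiting a witness $v_1+\chi$ with $v_1\in V_U$, $\chi\in X_R$ (which implicitly leans on Lemma \ref{bilform_calcs} to place the failure in $V_U$ and to get $\eta(\chi,\chi)=0$); your direct version is, if anything, slightly cleaner.
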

\begin{proof}
    Decompose $U=V_U\oplus W_U\oplus X_U$ and $R=V_R\oplus W_R\oplus X_R$. If $\beta$ and $\eta$ are alternating, then by Proposition ~\ref{alternatingfirstdef}, $\beta(a, a) = 0$ for all $a \in V_U \oplus X_U$, and $\eta(b, b) = 0$ for all $b \in V_R \oplus X_R$.
    Then, $(\beta + \eta)(a + b, a + b) = \beta(a, a) + \eta(b, b) = 0$ for all $a \in V_U \oplus X_U, b \in V_R \oplus X_R$, which proves by Proposition ~\ref{alternatingfirstdef} that $\beta + \eta$ is alternating.
    
    To prove the converse, we will show that $\beta + \eta$ is not alternating when at least one of $\beta$ and $\eta$ is not alternating. If $\beta$ is not alternating, then Proposition \ref{alternatingfirstdef} implies the existence of a vector $v_1\in V_U$ such that $\beta(v_1,v_1)\neq 0$. For any vector $\chi$ in $X_R$, $t.(v_1+\chi)=t.v_1+t.\chi=0$, and $\eta(\chi,\chi)=0$. Consequently, $(\beta +\eta)(v_1+\chi,v_1+\chi)=\beta(v_1,v_1)+\eta(\chi,\chi)\neq 0$, and it follows from Proposition ~\ref{alternatingmaindef} that $\beta +\eta$ is not alternating.
\end{proof}

\begin{lem}\label{invariantsum}
If both $\beta$ and $\eta$ are alternating, then $\mathcal{I}_{\beta + \eta}=\mathcal{I}_\beta+\mathcal{I}_\eta$.
\end{lem}
\begin{proof}
    First, let us establish our notation for this proof. The bases of $X_U$ and $X_R$ are given by $\{x_1,x_2,\dots ,x_n\}$ and $\{\chi_1,\chi_2,\dots ,\chi_q\}$, respectively. We denote the $X$-function of $\beta$ by $f_\beta$, the $X$-function of $\eta$ by $f_\eta$, and the $X$-function of $\beta + \eta$ by $f_{\beta + \eta}$. Additionally, $X$-matrices of $\beta$, $\eta$, and $\beta + \eta$ are denoted by $M_\beta$, $M_\eta$, and $M$, respectively. 
    
    Define a basis of $\beta + \eta$ by $\{b_1, \dots, b_{n+q}\}$ where the vectors $b_1, \dots, b_n$ are given by $x_1, \dots, x_n$ and the vectors $b_{n+1}, \dots, b_{n+q}$ are given by $\chi_1, \dots, \chi_q$. For any $1\le i\le n$, $f_{\beta + \eta}(x_i+0)=\beta(x_i,x_i)=f_{\beta}(x_i)$. We also have $f_{\beta + \eta}(0+\chi_i)=f_{\eta}(\chi_i)$ for all $1 \leq i \leq q$. There is a similar relationship between the $X$-matrices of our forms: $M = M_\beta \oplus M_\eta =
            \left[\begin{matrix}
    		  M_\beta & 0 \\
                0 & M_\eta \\
    		\end{matrix}\right]$,
    so
    $M^{-1} = \left[\begin{matrix}
    		  M_\beta^{-1} & 0 \\
                0 & M_\eta^{-1} \\
    		\end{matrix}\right]$.
    Thus,
    \begin{align*}
    \mathcal{I}_{\beta + \eta} &= \sum_{i=1}^{n+q} f_{\beta + \eta}(b_i)(M^{-1})_{ii} \\
    &=\sum_{i=1}^{n} f_{\beta + \eta}(x_i+0)(M^{-1})_{ii}+\sum_{i=n+1}^{n+q}f_{\beta + \eta}(0+\chi_{i-n})(M^{-1})_{ii} \\
    &\quad= \sum_{i=1}^{n} f_{\beta}(x_i)(M_\beta^{-1})_{ii} + \sum_{i=1}^{q} f_{\eta}(\chi_i)(M_\eta^{-1})_{ii}.
    \end{align*}
\end{proof}
We can now apply our work from the previous section on good pairs and alternating forms (Theorem ~\ref{non-isomorphic}) and form invariants (Lemma ~\ref{fnon-isomorphic}, Proposition ~\ref{invariantlist}) to determine the  sum of isomorphism classes in our Witt semi-ring in the table below. We list the isomorphism classes of $\beta$ (on $m\un \oplus nP$) and $\eta$ (on $p\un \oplus qP$) in the top row and the leftmost column, respectively.
\begin{center}
\begin{tabular}{|l|l|l|l|l|p{60mm}|l|}
\hline
$+$       & A & B & C & D & E($a$) & F($a$) \\ \hline
A      & A & B & A & A & B & B \\ \hline
B      &   & B & B & B & B & B \\ \hline
C      &   &   & C & D & E($a$) & F($a$) \\ \hline
D      &   &   &   & D & F($na$) & F($a$) \\ \hline
E($b$) &   &   &   &      & $a=b\rightarrow$ E$(a)$; \newline$a\neq b\rightarrow$ F($   na+qb$) & F($a+qb$)    \\ \hline
F($b$) &   &   &   &     &      &  F($a+b$)\\ \hline
\end{tabular}
\end{center}
In the table, $a$ and $b$ represent arbitrary scalars.  The blank entries are given by commutativity.

\subsection{Multiplicative Structure}

To determine the product on bilinear forms, we will employ a similar strategy as the one we used to find the sum.

\begin{rem}
    Some statements in this section assume properties for at least one of $\beta$ and $\eta$ or assume different properties for $\beta$ and $\eta$. By commutativity, these claims are also true when we interchange the assumptions for $\beta$ and the assumptions for $\eta$.
\end{rem}

First, we will determine the good pairs of $\beta\times\eta$. By Proposition \ref{property}, it suffices to consider the pairs $(k,\ell) \in \KK^2$ that satisfy the property $$k(\beta\times\eta)(b_1\otimes b_2,t.(b_1\otimes b_2))=\ell(\beta\times\eta)(b_1\otimes b_2,b_1\otimes b_2)$$ for all vectors $b_1 \otimes b_2$ in a basis of $U \otimes R$. It is easier for us to instead consider the pairs $(k,\ell) \in \KK$ that satisfy this property for vectors of the form $u \otimes r \in U \otimes R$. This will give us all of the good pairs of $\beta \times \eta$ because the set of all vectors in $U \otimes R$ expressible as $u \otimes r$ contains a basis for $U \otimes R$. For vectors of this form, we have
\begin{equation}\label{4.2}
    \begin{aligned}
        (\beta\times\eta)(u\otimes r,u\otimes r)&=\beta(u,u)\eta(r,r)+\beta(u,t.u)\eta(r,t.r), \\
        (\beta\times\eta)(u\otimes r,t.(u\otimes r))&=(\beta\times\eta)(u\otimes r,t.u\otimes r+u\otimes t.r)\\
        &=(\beta\times\eta)(u\otimes r,t.u\otimes r) + (\beta\times\eta)(u\otimes r,u\otimes t.r), \\
        &=\beta(u,t.u)\eta(r,r)+\beta(u,u)\eta(r,t.r).
    \end{aligned}
\end{equation}

We begin with the cases where at least one of $\beta$ and $\eta$ lies in the isomorphism classes ~\ref{eqn:C} or \ref{eqn:E}(1).

\begin{prop}
If $\beta$ lies in ~\ref{eqn:C}, then $\beta\times\eta$ must also belong to ~\ref{eqn:C}.
\end{prop}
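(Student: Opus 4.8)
The plan is to show that if $\beta$ lies in the isomorphism class \ref{eqn:C}, then the product $\beta \times \eta$ has every pair $(k,\ell) \in \KK^2$ as a good pair, since by the previous classification work (Theorem \ref{non-isomorphic}) the class \ref{eqn:C} is precisely the class characterized by having all pairs as good pairs. First I would recall that a form in \ref{eqn:C} is $\alpha_2^m + \tfrac{n}{2}\beta_{2P}(0)$, and in particular it is alternating and satisfies $\beta(u,u) = 0$ and $\beta(u,t.u) = 0$ for all $u \in U$; indeed, as noted in the proof of Theorem \ref{non-isomorphic}, the good pair condition $k\beta(u,t.u) = \ell\beta(u,u)$ holds for all $k,\ell$ precisely because both $\beta(u,t.u)$ and $\beta(u,u)$ vanish identically. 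So the essential input is that a form in \ref{eqn:C} is \emph{both} oscillating and super-alternating.

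The key computation is then immediate from the formulas \eqref{4.2} already derived in the excerpt. Since $\beta(u,u) = 0$ and $\beta(u,t.u) = 0$ for every $u \in U$, we substitute into
\[
(\beta\times\eta)(u\otimes r,u\otimes r) = \beta(u,u)\eta(r,r) + \beta(u,t.u)\eta(r,t.r)
\]
and
\[
(\beta\times\eta)(u\otimes r, t.(u\otimes r)) = \beta(u,t.u)\eta(r,r) + \beta(u,u)\eta(r,t.r),
\]
to conclude that both expressions vanish for all vectors of the form $u \otimes r$. Because every element of a basis of $U \otimes R$ arises in this form (as remarked just before \eqref{4.2}), Proposition \ref{property} gives that $(\beta \times \eta)(b, t.b) = 0$ and $(\beta \times \eta)(b,b) = 0$ for all $b \in U \otimes R$. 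Hence every pair $(k,\ell)$ is a good pair of $\beta \times \eta$.

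It remains to argue that having all good pairs forces membership in \ref{eqn:C} rather than any other class. Here I would invoke the analysis of good pairs in the proof of Theorem \ref{non-isomorphic}: the forms in \ref{eqn:A} and \ref{eqn:B} are not alternating, while among the alternating classes only \ref{eqn:C} has the full set of good pairs (the classes \ref{eqn:D}, \ref{eqn:E}, and \ref{eqn:F} each have strictly smaller good-pair sets). Since $\beta \times \eta$ is symmetric by Proposition \ref{products} and non-degenerate because $\beta$ and $\eta$ are, it is one of the six classes of Theorem \ref{6forms}; having all good pairs then pins it down to \ref{eqn:C}. The one point requiring care — and the only real obstacle — is confirming that the good-pair condition for the product really does reduce to checking the spanning vectors $u \otimes r$ rather than an arbitrary sum of them; but this is exactly the reduction justified in the paragraph preceding \eqref{4.2}, so the argument goes through cleanly.
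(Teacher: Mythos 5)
Your proposal is correct and follows essentially the same route as the paper's proof: use the fact that forms in \ref{eqn:C} satisfy $\beta(u,u)=\beta(u,t.u)=0$ identically, substitute into the formulas \eqref{4.2} to see both expressions vanish on all vectors $u\otimes r$, and conclude that only class \ref{eqn:C} exhibits this property. The extra justifications you supply (the reduction to spanning vectors $u\otimes r$ via Proposition \ref{property}, and the explicit appeal to the good-pair analysis of Theorem \ref{non-isomorphic} to pin down the class) are exactly the points the paper treats implicitly or in the paragraph preceding \eqref{4.2}, so your write-up is a slightly more detailed version of the same argument.
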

\begin{proof}
Since $\beta$ is in \ref{eqn:C}, $\beta(u,t.u)=0$ and $\beta(u,u)=0$ for all $u \in U$. For all $r \in R$, we thus have $(\beta\times\eta)(u\otimes r,u\otimes r)=0$ and $(\beta\times\eta)(u\otimes r,t.(u\otimes r))=0$ by the equations in ~\eqref{4.2}. These properties are only exhibited by forms in ~\ref{eqn:C}.
\end{proof}
\begin{prop}
Suppose that $\eta$ lies in \ref{eqn:E}(1) and $\beta$ does not belong to the isomorphism classes ~\ref{eqn:C} or \ref{eqn:E}(1). Then, $\beta\times\eta$ is in \ref{eqn:E}(1).
\end{prop}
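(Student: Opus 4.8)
The goal is to show that if $\eta \in$ \ref{eqn:E}(1) and $\beta \notin$ \{\ref{eqn:C}, \ref{eqn:E}(1)\}, then $\beta \times \eta \in$ \ref{eqn:E}(1). My strategy mirrors the previous proposition: I will pin down the good pairs and alternating/form-invariant data of $\beta \times \eta$ using the formulas in \eqref{4.2}, and then match these against the distinguishing invariants established in Theorem \ref{non-isomorphic} and Proposition \ref{invariantlist}. The key input is that $\eta \in$ \ref{eqn:E}(1) means (after choosing the standard basis) $R = qP$ with $\eta = q\beta_P(1)$, so $\eta(r, r) = \eta(r, t.r)$ for all $r \in R$ (the good pairs of \ref{eqn:E}(1) are $(k,k)$ for all $k$, since $a = 1$). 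In particular $R$ has no $\un$-summand, so $\beta \times \eta$ lives on $(m\un \oplus nP) \otimes qP$, which is a direct sum of copies of $P$ (since $P \otimes P = P \oplus P$ and $\un \otimes P = P$); thus $\beta \times \eta$ is automatically a form on an object of the type $N P$ with no trivial summand, placing it among the classes \ref{eqn:C}, \ref{eqn:D}, \ref{eqn:E}, \ref{eqn:F} on $NP$.

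\textbf{Computing the good pairs.} Using $\eta(r,t.r) = \eta(r,r)$ throughout, the two lines of \eqref{4.2} simplify: writing $s := \eta(r,r) = \eta(r,t.r)$, we get
\begin{align*}
    (\beta \times \eta)(u \otimes r, u \otimes r) &= \bigl(\beta(u,u) + \beta(u, t.u)\bigr)\, s, \\
    (\beta \times \eta)(u \otimes r, t.(u \otimes r)) &= \bigl(\beta(u, t.u) + \beta(u,u)\bigr)\, s.
\end{align*}
These two expressions are \emph{identical}, so $(\beta \times \eta)(u \otimes r, t.(u\otimes r)) = (\beta \times \eta)(u\otimes r, u \otimes r)$ for every elementary tensor $u \otimes r$. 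Since such tensors span $U \otimes R$ and the set of vectors satisfying a good-pair relation is a subspace (Proposition \ref{property}), it follows that $(1,1)$ is a good pair of $\beta \times \eta$, which is exactly the good-pair signature of \ref{eqn:E}(1). I then need to rule out the other classes on $NP$: class \ref{eqn:C} has good pair $(k,\ell)$ for \emph{all} $k,\ell$, class \ref{eqn:D} has good pairs $(k,0)$, and class \ref{eqn:F} has only $(0,0)$; class \ref{eqn:E}($a$) has good pairs $(ka, k)$. To exclude \ref{eqn:C}, \ref{eqn:D}, \ref{eqn:F} and to force $a = 1$, it suffices to exhibit a single vector $u_0 \otimes r_0$ with $(\beta \times \eta)(u_0 \otimes r_0, u_0 \otimes r_0) \neq 0$, since that value being nonzero is incompatible with $(0,1)$ being a good pair and forces $a=1$ in \ref{eqn:E}. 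This is where the hypotheses $\beta \notin$ \ref{eqn:C} and $\beta \notin$ \ref{eqn:E}(1) enter: because $\beta$ is not in \ref{eqn:C}, there is some $u_0$ with $\beta(u_0, u_0) + \beta(u_0, t.u_0) \neq 0$ (the case $\beta(u_0,u_0) = \beta(u_0,t.u_0) \neq 0$ is where I must be careful and use that $\beta \neq$ \ref{eqn:E}(1)); combined with a choice of $r_0$ with $\eta(r_0, r_0) \neq 0$, this yields the desired nonzero value.

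\textbf{The main obstacle.} The delicate point is the final sign/value bookkeeping: I must verify that $\beta \notin$ \{\ref{eqn:C}, \ref{eqn:E}(1)\} genuinely guarantees the existence of $u_0$ with $\beta(u_0,u_0) + \beta(u_0, t.u_0) \neq 0$, rather than merely $\beta(u_0, u_0) \neq 0$ or $\beta(u_0, t.u_0) \neq 0$ individually. If $\beta$ had the property that $\beta(u,u) = \beta(u,t.u)$ for all $u$ — i.e.\ good pair $(1,1)$, which is precisely the signature of \ref{eqn:E}(1) — then the sum would vanish identically and the argument would collapse; so I must show this property characterizes \ref{eqn:E}(1) among all our classes, which follows by inspecting the good-pair table in Theorem \ref{non-isomorphic} (only \ref{eqn:E}($1$) and \ref{eqn:C} satisfy $\beta(u,u)=\beta(u,t.u)$ for all $u$, and both are excluded by hypothesis). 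With $u_0$ in hand, the good-pair computation above pins $\beta \times \eta$ to \ref{eqn:E}; that the form is moreover \emph{non-degenerate} is immediate from the general fact (noted after the definition of the product) that the product of non-degenerate forms is non-degenerate; and the value $a = 1$ is forced since the good pair is $(1,1) = (a \cdot 1, 1)$. This completes the identification $\beta \times \eta \in$ \ref{eqn:E}(1).
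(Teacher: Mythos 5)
Your proposal is, in its core, the same argument as the paper's: use \eqref{4.2} together with the good pair $(1,1)$ of $\eta$ to see that both expressions collapse to $\bigl(\beta(u,u)+\beta(u,t.u)\bigr)\eta(r,r)$, invoke Proposition \ref{property} to conclude $(1,1)$ is a good pair of $\beta\times\eta$ (so the class must be \ref{eqn:C} or \ref{eqn:E}(1)), and then rule out \ref{eqn:C} by exhibiting $u_0$ with $\beta(u_0,u_0)\neq\beta(u_0,t.u_0)$ --- which exists precisely because $(1,1)$ being a good pair characterizes \ref{eqn:C} and \ref{eqn:E}(1), both excluded for $\beta$ --- and $r_0$ with $\eta(r_0,r_0)\neq 0$. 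That is exactly the paper's proof, and your handling of the ``main obstacle'' (that $\beta\notin$ \ref{eqn:C} alone is not enough; you need $\beta\notin$ \ref{eqn:E}(1) as well to make the \emph{sum} nonzero) is the correct and necessary subtlety.

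One side claim in your opening paragraph is false, though it does not sink the proof: forms in \ref{eqn:E}(1) are $\alpha_2^m + n\beta_P(1)$ with $2\mid m$ and $n>0$, so $R$ may very well contain trivial summands ($m$ need not be zero), and hence $U\otimes R$ need not be of the form $NP$. Your justification for discarding classes \ref{eqn:A} and \ref{eqn:B} from consideration is therefore invalid as written, and your rule-out list (only \ref{eqn:C}, \ref{eqn:D}, \ref{eqn:F}, and \ref{eqn:E}$(a)$ with $a\neq 1$) has a genuine, if small, hole. The patch is immediate: forms in \ref{eqn:A} have good pairs $(k,0)$ and forms in \ref{eqn:B} have only $(0,0)$, so the good pair $(1,1)$ you establish excludes them by the same reasoning that excludes \ref{eqn:D} and \ref{eqn:F}. (Alternatively, $\eta$ is alternating and one can argue the product is alternating, whereas forms in \ref{eqn:A} and \ref{eqn:B} are not; but the good-pair fix is self-contained at this point of the paper.) With that sentence repaired, your argument is complete and coincides with the paper's.
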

\begin{proof}
The equation $(\beta\times\eta)(u\otimes r,t.(u\otimes r))=(\beta\times\eta)(u\otimes r,u\otimes r)$ holds for all vectors of the form $u \otimes r$ in $U \otimes R$. We can see that $(1, 1)$ is a good pair of $\beta\times\eta$, which is only true for forms belonging to the classes $\ref{eqn:C}$ and \ref{eqn:E}(1). Since $\beta$ is not in $\ref{eqn:C}$ or \ref{eqn:E}(1), there exists a vector $u_1\in U$ such that $\beta(u_1,u_1)\neq \beta(u_1,t.u_1)$. Furthermore, since $\eta$ is in \ref{eqn:E}(1), there exists a vector $r_1\in \eta$ such that $\eta(r_1,r_1)=\eta(r_1,t.r_1)\neq 0$. Then, $(\beta\times\eta)(u_1\otimes r_1,u_1\otimes r_1)$ must be nonzero, which cannot be true for forms in ~\ref{eqn:C}.
\end{proof}
\begin{prop}
If $\beta$ and $\eta$ are both in \ref{eqn:E}(1), then $\beta\times\eta$ belongs to ~\ref{eqn:C}.
\end{prop}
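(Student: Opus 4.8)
The plan is to compute the good pairs of $\beta \times \eta$ directly and show that they comprise all of $\KK^2$, which by the proof of Theorem \ref{non-isomorphic} is precisely the defining feature of the class \ref{eqn:C}. Since $\beta$ and $\eta$ both lie in \ref{eqn:E}(1), each has $(1,1)$ as a good pair, so that $\beta(u, t.u) = \beta(u,u)$ for every $u \in U$ and $\eta(r, t.r) = \eta(r,r)$ for every $r \in R$.

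First I would substitute these two equalities into the product formulas \eqref{4.2}. Writing $a = \beta(u,u) = \beta(u, t.u)$ and $b = \eta(r,r) = \eta(r, t.r)$, both displayed expressions collapse to $ab + ab = 2ab = 0$ in characteristic $2$. Hence $(\beta \times \eta)(u \otimes r, u \otimes r) = 0$ and $(\beta \times \eta)(u \otimes r, t.(u \otimes r)) = 0$ for every split tensor $u \otimes r \in U \otimes R$.

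Next I would promote this from split tensors to all of $U \otimes R$. For any fixed pair $(k, \ell) \in \KK^2$, Proposition \ref{property} guarantees that the set of vectors satisfying $k(\beta \times \eta)(v, t.v) = \ell(\beta \times \eta)(v, v)$ is a subspace of $U \otimes R$; since it contains every $u \otimes r$ and these span $U \otimes R$, it is the whole space. Thus every $(k, \ell)$ is a good pair of $\beta \times \eta$.

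Finally, $\beta \times \eta$ is symmetric by Proposition \ref{products} and non-degenerate because the product of non-degenerate forms is non-degenerate, so it is equivalent to one of the forms \ref{eqn:A}--\ref{eqn:F} of Theorem \ref{6forms}. Reading off the good pairs recorded in the proof of Theorem \ref{non-isomorphic}, the class \ref{eqn:C} is the unique one whose good pairs exhaust $\KK^2$ (the others admit only $(0,0)$, only $(k,0)$, or only $(ka,k)$), so $\beta \times \eta$ must lie in \ref{eqn:C}. The only genuine subtlety is the passage through Proposition \ref{property} together with the observation that the good-pair invariant alone already distinguishes \ref{eqn:C}, so no finer invariant such as the form invariant is needed here.
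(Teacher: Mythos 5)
Your proof is correct and follows essentially the same route as the paper's: substitute the good pair $(1,1)$ of both forms into the product formulas \eqref{4.2} to see that both $(\beta\times\eta)(u\otimes r, u\otimes r)$ and $(\beta\times\eta)(u\otimes r, t.(u\otimes r))$ collapse to $2\beta(u,u)\eta(r,r)=0$, then conclude that only forms in class \ref{eqn:C} have this property. The paper leaves the spanning/subspace step and the final good-pair identification implicit (they are set up in the discussion preceding the proposition), whereas you spell them out; this is a difference of exposition, not of method.
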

\begin{proof}
If $\beta$ and $\eta$ are both in \ref{eqn:E}(1), then they must each have the good pair $(1, 1)$. In other words, $\beta(u, t.u)=\beta(u, u)$ for all $u \in U$, and $\eta(r, t.r) = \eta(r, r)$ for all $r\in R$.
For all values of $u \otimes r \in U \otimes R$, we thus have
\begin{align*}
    (\beta\times\eta)(u\otimes r,u\otimes r)&=\beta(u,u)\eta(r,r)+\beta(u,t.u)\eta(r,t.r)
    = 2 \cdot \beta(u,u)\eta(r,r)
    = 0, \\
    (\beta\times\eta)(u\otimes r,t.(u\otimes r))&=\beta(u,t.u)\eta(r,r)+\beta(u,u)\eta(r,t.r)= 2 \cdot \beta(u,u)\eta(r,r)=0. 
\end{align*}
These equations only hold for forms in ~\ref{eqn:C}.
\end{proof}

The remaining cases occur when neither $\beta$ nor $\eta$ belongs to ~\ref{eqn:C} or \ref{eqn:E}(1). To address these cases, we start with the following proposition.
\begin{prop}\label{system}
    Suppose $\beta$ has a single good pair $(0, 0)$. For any scalars $k, \ell$, there exists a solution to the system of equations 
    \begin{align*}
        &\beta(u, u) = k, \\
        &\beta(u, t.u) = \ell.
    \end{align*}
\end{prop}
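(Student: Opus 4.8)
The plan is to use the classification to turn the abstract hypothesis into two concrete forms, and then to solve the system by an explicit coordinate computation. By the good-pair analysis carried out in the proof of Theorem~\ref{non-isomorphic}, the non-degenerate symmetric bilinear forms whose only good pair is $(0,0)$ are precisely those in the isomorphism classes \ref{eqn:B} and \ref{eqn:F}. Thus I would first reduce to verifying the claim for a representative of each of these two classes, fixing the standard basis \eqref{standard_basis}.

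Next I would extract the two relevant functions in coordinates. Writing $u = \sum_i a_i v_i + \sum_k (b_k w_k + c_k x_k)$ so that $t.u = \sum_k b_k x_k$, and using Lemma~\ref{bilform_calcs} (which gives $\beta|_{X\otimes X}=0$ and $\beta|_{V\otimes X}=0$) together with the vanishing of the symmetric cross terms $2\beta(-,-)=0$ in characteristic $2$, both expressions collapse to diagonal sums of squares. In both classes one obtains $\beta(u,t.u)=\sum_k b_k^2$; meanwhile $\beta(u,u)=\sum_i a_i^2$ for a form in \ref{eqn:B} (the $\alpha_1^m$-block has diagonal $1$, the $P$-blocks diagonal $0$), whereas $\beta(u,u)=b_{n-1}^2+yb_n^2$ for a form in \ref{eqn:F} once the $\beta_P(1)$- and $\beta_P(y)$-summands are indexed as the last two copies of $P$ (the $\alpha_2^m$- and $\beta_P(0)$-blocks contributing zero diagonal). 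Since $\KK$ is algebraically closed, hence perfect, every scalar is a square, so solving the system amounts to hitting arbitrary targets with these sums of squares.

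Finally I would solve. For class \ref{eqn:B}, as $m,n>0$ one may take $a_1=\sqrt{k}$, $b_1=\sqrt{\ell}$, and all remaining coordinates zero. For class \ref{eqn:F} with $n\ge 3$, set $b_n=0$ and $b_{n-1}=\sqrt{k}$, and absorb the leftover into a $\beta_P(0)$-summand by taking $b_1=\sqrt{k+\ell}$ (available since $n-2\ge 1$), all other $b_j=0$. I expect the genuine obstacle to be the last case, class \ref{eqn:F} with $n=2$: here there is no $\beta_P(0)$-summand to absorb the discrepancy, and the equations $b_1^2+yb_2^2=k$ and $b_1^2+b_2^2=\ell$ add (in characteristic $2$) to $(1+y)b_2^2=k+\ell$. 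This is solvable for $b_2$ exactly because the standing hypothesis $(1+y,n)\neq(0,2)$ forces $1+y\neq0$; one then recovers $b_1$ from the second equation. This is the one point where the excluded parameter value is essential.
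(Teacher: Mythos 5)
Your proof is correct, but it takes a genuinely different route from the paper's. The paper argues intrinsically, with no appeal to the classification: since $(0,0)$ is the only good pair, there is a vector $\mu_1$ with $(k_1,\ell_1) \coloneqq (\beta(\mu_1,\mu_1),\beta(\mu_1,t.\mu_1)) \neq (0,0)$, and the failure of $(k_1,\ell_1)$ itself to be a good pair produces a second vector $\mu_2$ whose value pair $(k_2,\ell_2)$ is linearly independent from $(k_1,\ell_1)$; writing $(k,\ell) = c(k_1,\ell_1)+d(k_2,\ell_2)$, the vector $u = \sqrt{c}\,\mu_1+\sqrt{d}\,\mu_2$ solves the system because all cross terms vanish in characteristic $2$ (using $\beta(t.\mu_1,\mu_2)=\beta(\mu_1,t.\mu_2)$). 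You instead invoke Theorem \ref{6forms} together with the good-pair determination in the proof of Theorem \ref{non-isomorphic} to reduce to explicit representatives of classes \ref{eqn:B} and \ref{eqn:F}, and then solve in coordinates. This is legitimate in context: $\beta$ is fixed in \S\ref{witt} to be non-degenerate and symmetric, good pairs are preserved under equivalence of forms, and there is no circularity since Proposition \ref{system} plays no role in the classification. Your coordinate formulas also check out: $\beta(u,t.u)=\sum_k b_k^2$ in both classes, $\beta(u,u)=\sum_i a_i^2$ in \ref{eqn:B}, and $\beta(u,u)=b_{n-1}^2+yb_n^2$ in \ref{eqn:F}, with the $n=2$ case of \ref{eqn:F} handled precisely by the standing constraint $(1+y,n)\neq(0,2)$, which makes $(1+y)b_2^2=k+\ell$ solvable. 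What the paper's approach buys is brevity and generality: it needs neither non-degeneracy nor the classification, so it applies to any symmetric form whose sole good pair is $(0,0)$. What your approach buys is explicitness: it exhibits concrete solutions and makes visible exactly where the excluded parameter value enters, something the abstract argument conceals.
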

\begin{proof}
Since $(0,0)$ is the only good pair of $\beta$, there exists a vector $\mu_1\in U$ such that at least one of $\beta(\mu_1,\mu_1)$ and $\beta(\mu_1,t.\mu_1)$ is nonzero. Let $k_1, \ell_1$ be the scalars given by $k_1 \coloneqq \beta(\mu_1, \mu_1)$ and $\ell_1 \coloneqq \beta(\mu_1, t.\mu_1)$. Then, $(k_1, \ell_1) \neq (0, 0)$.
If $\beta(\mu_1, \mu_1)\beta(\mu, t.\mu)=\beta(\mu_1, t.\mu_1)\beta(\mu, \mu)$ for all $\mu \in U$, then $(k_1, \ell_1)$ would be a good pair of $\beta$.
Therefore, since $(0, 0)$ is the only good pair of $\beta$, there must exist some vector $\mu_2 \in U$ such that $$k_1\beta(\mu_2, t.\mu_2) \neq \ell_1\beta(\mu_2, \mu_2).$$
Defining $k_2 \coloneqq \beta(\mu_2,\mu_2)$ and $ \ell_2 \coloneqq \beta(\mu_2,t.\mu_2),$ we have $k_1\ell_2\neq k_2\ell_1$. The pairs $(k_1,\ell_1),(k_2,\ell_2)$ are linearly independent vectors over $\KK^2$, so $(k_1,\ell_1),(k_2,\ell_2)$ span $\KK^2$. Thus, there exist scalars $c, d$ such that $c(k_1, \ell_1) + d(k_2, \ell_2) = (k, \ell)$. 

Let $u = \sqrt{c}\mu_1 + \sqrt{d}\mu_2$. We have
    \begin{align*}
        \beta(u, u) &= \beta(\sqrt{c}\mu_1 + \sqrt{d}\mu_2, \sqrt{c}\mu_1 + \sqrt{d}\mu_2) \\
        &= c\beta(\mu_1, \mu_1) + d\beta(\mu_2, \mu_2) + 2\cdot \sqrt{cd}\beta(\mu_1, \mu_2)) \\
        &= c\beta(\mu_1, \mu_1) + d\beta(\mu_2, \mu_2)
        = k
    \end{align*}
    and
    \begin{align*}
        \beta(u, t.u) &= \beta(\sqrt{c}\mu_1 + \sqrt{d}\mu_2, t.(\sqrt{c}\mu_1 + \sqrt{d}\mu_2)) \\
        &= \beta(\sqrt{c}\mu_1 + \sqrt{d}\mu_2, \sqrt{c}t.\mu_1 + \sqrt{d}t.\mu_2) \\
        &= c\beta(\mu_1, t.\mu_1) + d\beta(\mu_2, t.\mu_2) + \sqrt{cd}(\beta(\mu_1, t.\mu_2) + \beta(t.\mu_1, \mu_2)) \\
        &= c\beta(\mu_1, t.\mu_1) + d\beta(\mu_2, t.\mu_2) + \sqrt{cd}(\beta(\mu_1, t.\mu_2) + \beta(\mu_1, t.\mu_2)) \\
        &= c\beta(\mu_1, t.\mu_1) + d\beta(\mu_2, t.\mu_2) + 2 \cdot \sqrt{cd}\beta(\mu_1, t.\mu_2) \\
        &= c\beta(\mu_1, t.\mu_1) + d\beta(\mu_2, t.\mu_2)
        = \ell,
    \end{align*}
    which shows that $u$ is a solution to the system.
\end{proof}

\begin{lem}
Suppose that the only good pair of $\beta$ is $(0,0)$ and that $\eta$ does not belong to the classes ~\ref{eqn:C} or \ref{eqn:E}(1). Then, the only good pair of $\beta\times\eta$ is $(0,0)$.
\end{lem}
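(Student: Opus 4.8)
The plan is to take an arbitrary good pair $(k,\ell)$ of $\beta\times\eta$ and show it must be $(0,0)$. By Proposition \ref{property}, it suffices to verify the defining relation $k(\beta\times\eta)(v,t.v)=\ell(\beta\times\eta)(v,v)$ on the decomposable vectors $v=u\otimes r$, since these contain a basis of $U\otimes R$. Substituting the two formulas in \eqref{4.2} and abbreviating $a=\beta(u,u)$, $b=\beta(u,t.u)$, $c=\eta(r,r)$, $d=\eta(r,t.r)$, the relation becomes
\[
k(bc+ad)=\ell(ac+bd),
\]
which I would rearrange into the single equation $a(kd-\ell c)+b(kc-\ell d)=0$, required to hold for all $u\in U$ and all $r\in R$.

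Next I would exploit the hypothesis on $\beta$. Since the only good pair of $\beta$ is $(0,0)$, Proposition \ref{system} guarantees that as $u$ ranges over $U$ the pair $(a,b)=(\beta(u,u),\beta(u,t.u))$ attains every value in $\KK^2$. Fixing $r$ and specializing $(a,b)$ first to $(1,0)$ and then to $(0,1)$ forces $kd=\ell c$ and $kc=\ell d$ for every $r\in R$; that is,
\[
k\,\eta(r,t.r)=\ell\,\eta(r,r)\qquad\text{and}\qquad \ell\,\eta(r,t.r)=k\,\eta(r,r)
\]
for all $r$. The first identity says $(k,\ell)$ is a good pair of $\eta$, and the second says $(\ell,k)$ is one as well.

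Finally I would invoke the classification of good pairs recorded in the proof of Theorem \ref{non-isomorphic}. Since $\eta$ lies in none of the classes \ref{eqn:C} or \ref{eqn:E}$(1)$, its good pairs are either only $(0,0)$ (classes \ref{eqn:B} and \ref{eqn:F}), all $(k',0)$ (classes \ref{eqn:A} and \ref{eqn:D}), or all $(k'a,k')$ for a fixed $a\neq 1$ (class \ref{eqn:E}$(a)$). In the first two situations, requiring both $(k,\ell)$ and $(\ell,k)$ to be good pairs of $\eta$ immediately gives $k=\ell=0$. In the \ref{eqn:E}$(a)$ case the two memberships read $k=\ell a$ and $\ell=k a$, so $k=k a^2$, i.e.\ $k(1+a)^2=0$ in characteristic $2$; since $a\neq 1$ we have $1+a\neq 0$, whence $k=0$ and then $\ell=k a=0$. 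Thus in every admissible case $(k,\ell)=(0,0)$, proving the lemma. The only genuinely delicate point—and the step I would double-check—is this last \ref{eqn:E}$(a)$ computation, where the exclusion $a\neq 1$ is precisely what makes the argument go through, and where one must recall that $1-a^2=(1+a)^2$ in characteristic $2$.
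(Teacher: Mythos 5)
Your proof is correct, and it diverges from the paper's argument in its second half in a genuine way. Both proofs lean on Proposition \ref{system} to make the pair $(\beta(u,u),\beta(u,t.u))$ hit every value in $\KK^2$, but they deploy it differently. The paper first fixes a \emph{single} vector $r\in R$ with $\eta(r,r)\neq\eta(r,t.r)$ (which exists exactly because $\eta$ avoids \ref{eqn:C} and \ref{eqn:E}$(1)$), then inverts the $2\times 2$ system with matrix $\left[\begin{smallmatrix}\eta(r,r) & \eta(r,t.r)\\ \eta(r,t.r) & \eta(r,r)\end{smallmatrix}\right]$ — invertible since its determinant is $(\eta(r,r)+\eta(r,t.r))^2\neq 0$ in characteristic $2$ — to conclude that the value pair $\bigl((\beta\times\eta)(v,v),(\beta\times\eta)(v,t.v)\bigr)$ is surjective onto $\KK^2$ on decomposables $v=u\otimes r$; any good pair $(k,\ell)$ must then satisfy $kb=\ell a$ for all $(a,b)$, forcing $(k,\ell)=(0,0)$. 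You instead let $r$ vary, extract from the specializations $(a,b)=(1,0)$ and $(0,1)$ the two identities saying that $(k,\ell)$ \emph{and} $(\ell,k)$ are good pairs of $\eta$, and then finish by a case analysis against the explicit good-pair lists recorded in the proof of Theorem \ref{non-isomorphic}. The paper's route is more self-contained: it needs only the single fact that $(1,1)$ is not a good pair of $\eta$, and it never invokes the per-class classification. Your route trades the linear-algebra step for that classification (which the paper does establish, so the dependence is legitimate), and in return makes the role of the exclusions crystal clear — the symmetric pair of memberships $k=\ell a$, $\ell=ka$ collapses to zero precisely because $(1+a)^2\neq 0$ when $a\neq 1$, i.e.\ precisely because \ref{eqn:E}$(1)$ and \ref{eqn:C} were ruled out. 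One small point worth making explicit if you write this up: you should note that the good-pair lists in Theorem \ref{non-isomorphic} are exhaustive for each class (they are, as the paper's table computations require), since your case analysis silently uses "the good pairs are exactly these" rather than "at least these."
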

\begin{proof}
Since $\eta$ is not in \ref{eqn:C} or \ref{eqn:E}(1), there must exist a vector $r \in R$ such that $\eta(r,r)\neq \eta(r,t.r)$. Then, for any scalars $a, b \in \KK$, the system of equations \begin{align*}
    a=c\eta(r,r)+d\eta(r,t.r), \\
    b=c\eta(r,t.r)+d\eta(r,r)
\end{align*} has a solution in some scalars $c$ and $d$. By Proposition ~\ref{system}, there exists a vector $u \in U$ such that $\beta(u,u)=c,\beta(u,t.u)=d$. We obtain \begin{align*}
    &(\beta\times\eta)(u\otimes r,u\otimes r)=\beta(u,u)\eta(r,r)+\beta(u,t.u)\eta(r,t.r)=c\eta(r,r)+d\eta(r,t.r)=a, \\
    &(\beta\times\eta)(u\otimes r, t.(u\otimes r)) = \beta(u, t.u)\eta(r,r)+\beta(u,u)\eta(r,t.r) = d\eta(r,r)+c\eta(r,t.r)=b.
\end{align*}
For $(k, l) \in \KK^2$ to be a good pair of $\beta\times\eta$, the equation $kb=la$ must hold for all values of $a, b$. This is only true when $(k, l)=(0, 0)$.
\end{proof}
\begin{lem}
Let $k_1, k_2, \ell_1,$ and $\ell_2$ be elements of $\KK$. Suppose that the good pairs of $\beta$ are the multiples of $(k_1,\ell_1)$ and the good pairs of $\eta$ are the multiples of $(k_2,\ell_2)$. Suppose further that $\beta$ and $\eta$ are not in ~\ref{eqn:C} or \ref{eqn:E}(1). Then, the good pairs of $\beta\times\eta$ are the multiples of $(k_1k_2+\ell_1\ell_2, k_1\ell_2+\ell_1k_2)$.
\end{lem}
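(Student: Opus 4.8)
The plan is to test the good-pair condition for $\beta\times\eta$ only on pure tensors $u\otimes r$, evaluate it with the formulas in \eqref{4.2}, and distill it into a single linear equation in $(k,\ell)$. Write $K_1=k_1k_2+\ell_1\ell_2$ and $K_2=k_1\ell_2+\ell_1 k_2$. Since the pure tensors span $U\otimes R$ and, by Proposition~\ref{property}, the vectors satisfying a fixed relation $k\,\gamma(v,t.v)=\ell\,\gamma(v,v)$ form a subspace, $(k,\ell)$ is a good pair of $\beta\times\eta$ if and only if that relation holds for every $u\otimes r$ — exactly the reduction already used to obtain \eqref{4.2}. I will also note at the outset that if $(k_1,\ell_1)=(0,0)$ or $(k_2,\ell_2)=(0,0)$, then $\beta$ or $\eta$ has only the good pair $(0,0)$ and the claim (with $(K_1,K_2)=(0,0)$) follows from the preceding lemma; so I may assume both are nonzero.

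The next step is to identify the achievable data. Because $(k_1,\ell_1)$ is itself a good pair of $\beta$, every pair $(\beta(u,u),\beta(u,t.u))$ satisfies $k_1\beta(u,t.u)=\ell_1\beta(u,u)$, which in characteristic $2$ forces it to be a scalar multiple $s(k_1,\ell_1)$; likewise each $(\eta(r,r),\eta(r,t.r))$ is a multiple $w(k_2,\ell_2)$. Moreover, as $\beta\notin\ref{eqn:C}$, the quantities $\beta(u,u)$ and $\beta(u,t.u)$ do not both vanish identically, so some $u_0$ realizes a \emph{nonzero} multiple $s_0(k_1,\ell_1)$, and similarly some $r_0$ realizes $w_0(k_2,\ell_2)$ with $w_0\neq0$. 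Substituting $(\beta(u,u),\beta(u,t.u))=s(k_1,\ell_1)$ and $(\eta(r,r),\eta(r,t.r))=w(k_2,\ell_2)$ into \eqref{4.2} yields $(\beta\times\eta)(u\otimes r,u\otimes r)=sw\,K_1$ and $(\beta\times\eta)(u\otimes r,t.(u\otimes r))=sw\,K_2$.

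Consequently the good-pair relation on $u\otimes r$ reads $sw\,(kK_2+\ell K_1)=0$. Evaluating at $(u_0,r_0)$, where $s_0w_0\neq0$, shows every good pair obeys $kK_2=\ell K_1$; conversely, if $kK_2=\ell K_1$ then $k\cdot sw\,K_2=\ell\cdot sw\,K_1$ for \emph{all} pure tensors, and the subspace argument above promotes this to a genuine good pair. Hence the good pairs of $\beta\times\eta$ are exactly $\{(k,\ell):kK_2=\ell K_1\}$. Since $K_1K_2+K_2K_1=0$, the pair $(K_1,K_2)$ lies in this set, so once I know the solution set is one-dimensional it must be precisely the multiples of $(K_1,K_2)$, as claimed.

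The hard part is the final nonvanishing statement $(K_1,K_2)\neq(0,0)$, and this is exactly where the hypotheses $\beta,\eta\notin\ref{eqn:C},\ref{eqn:E}(1)$ are indispensable (without it the solution set $\{kK_2=\ell K_1\}$ degenerates to all of $\KK^2$, i.e.\ the product would land in \ref{eqn:C}). I would record the identity $(K_1,K_2)^\top=M_1\,(k_2,\ell_2)^\top$ for the symmetric matrix $M_1=\begin{bsmallmatrix}k_1&\ell_1\\\ell_1&k_1\end{bsmallmatrix}$, whose determinant is $(k_1+\ell_1)^2$. Among forms whose good pairs form a single line, the case $k_1=\ell_1$ occurs precisely for class \ref{eqn:E}$(1)$ (good pairs equal to the multiples of $(1,1)$); since $\beta\notin\ref{eqn:E}(1)$ we get $k_1\neq\ell_1$, so $M_1$ is invertible, and as $(k_2,\ell_2)\neq(0,0)$ this gives $(K_1,K_2)\neq(0,0)$, finishing the proof.
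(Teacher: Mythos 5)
Your proof is correct, and although it runs on the same skeleton as the paper's --- the reduction to pure tensors justified by Proposition \ref{property} and the evaluation formulas \eqref{4.2} --- both halves are executed by different means. For the containment, the paper manipulates the relations $k_1\beta(u,t.u)=\ell_1\beta(u,u)$ and $k_2\eta(r,t.r)=\ell_2\eta(r,r)$ term by term inside \eqref{4.2}; you instead parametrize the value pairs as $s(k_1,\ell_1)$ and $w(k_2,\ell_2)$, which collapses \eqref{4.2} to $(swK_1,swK_2)$ and gives, in one stroke, that the good pairs are \emph{exactly} the solution set $\{(k,\ell):kK_2=\ell K_1\}$, not merely that the multiples of $(K_1,K_2)$ are among them. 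For the non-degeneration step, the paper shows that $\beta\times\eta$ cannot lie in class \ref{eqn:C} by producing $u',r'$ with $(\beta(u',u')+\beta(u',t.u'))(\eta(r',r')+\eta(r',t.r'))\neq0$, whereas you prove $(K_1,K_2)\neq(0,0)$ directly from $(K_1,K_2)^{\top}=\left(\begin{smallmatrix}k_1&\ell_1\\ \ell_1&k_1\end{smallmatrix}\right)(k_2,\ell_2)^{\top}$, whose determinant $(k_1+\ell_1)^2$ is nonzero because $\beta$ not lying in \ref{eqn:E}(1) forces $k_1\neq\ell_1$. These two mechanisms are secretly identical, since the paper's nonzero product equals $s_0w_0(k_1+\ell_1)(k_2+\ell_2)=s_0w_0(K_1+K_2)$, but your version is the more careful one: the paper's inference that the \emph{sum} $\beta(u',u')+\beta(u',t.u')$ is nonzero does not follow, in characteristic $2$, from ``at least one term is nonzero'' and really needs $u'$ to witness that $(1,1)$ is not a good pair, which is precisely your condition $k_1\neq\ell_1$. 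Your explicit dispatch of the case $(k_1,\ell_1)=(0,0)$ or $(k_2,\ell_2)=(0,0)$ to the preceding lemma likewise covers a configuration the paper's proof glosses over (its claim that multiples of $(1,0)$ are good pairs when $k_1\ell_2=\ell_1k_2$ tacitly assumes $K_1\neq 0$). What the paper's route buys is the standalone fact that the product avoids class \ref{eqn:C}; what yours buys is an exact determination of the good-pair set and a cleaner, purely linear-algebraic certificate that it is one-dimensional.
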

\begin{proof}
First, we observe that for all $u \in U, r \in R$, 
\begin{align*}
    &(k_1k_2+\ell_1\ell_2)(\beta\times\eta)(u \otimes r, t.(u \otimes r))
    \\&\quad=(k_1k_2+\ell_1\ell_2)(\beta(u,u)\eta(r,t.r)+\beta(u,t.u)\eta(r,r)) \\
    &\quad =k_1k_2\beta(u,t.u)\eta(r, r)+k_1k_2\beta(u,u)\eta(r,t.r) + \ell_1\ell_2\beta(u, t.u)\eta(r,r)+\ell_1\ell_2\beta(u, u)\eta(r, t.r) \\
& \quad=k_2\ell_1\beta(u,u)\eta(r,r)+k_1\ell_2\beta(u,u)\eta(r,r)+k_2\ell_1\beta(u, t.u)\eta(r,t.r)+k_1\ell_2\beta(u,t.u)\eta(r,t.r)\\
    &\quad =(k_1\ell_2+\ell_1k_2)(\beta(u,u)\eta(r,r)+\beta(u,t.u)\eta(r,t.r))\\
    &\quad =(k_1\ell_2+\ell_1k_2)(\beta\times\eta)(u\otimes r,u\otimes r),
\end{align*}
which shows that the multiples of $(k_1k_2+\ell_1\ell_2,k_1\ell_2+\ell_1k_2)$ are good pairs of $\beta\times\eta$. It remains to prove that they are the only good pairs of $\beta\times\eta$. 

If $k_1\ell_2=\ell_1k_2$, then the multiples of $(1, 0)$ are good pairs of $\beta\times\eta$. If $k_1\ell_2\neq \ell_1k_2$, then the multiples of $(\frac{k_1k_2+\ell_1\ell_2}{k_1\ell_2+\ell_1k_2},1)$ are good pairs of $\beta\times\eta$. In either case, $\beta\times\eta$ will not have other good pairs unless it belongs to ~\ref{eqn:C}. We will prove that this cannot occur.

Because $\beta$ does not belong to ~\ref{eqn:C} or \ref{eqn:E}(1), there exists a vector $u' \in U$ such that at least one of $\beta(u',u'),\beta(u',t.u')$ is nonzero. Similarly, because $\eta$ does not belong to ~\ref{eqn:C} or \ref{eqn:E}(1), there exists a vector $r' \in R$ such that at least one of $\eta(r',r'),\eta(r',t.r')$ is nonzero. The quantities  $\beta(u',u')+\beta(u',t.u')$ and $\eta(r',r')+\eta(r',t.r')$ are therefore both nonzero, and their product 
\begin{align*}
    &(\beta(u',u')+\beta(u',t.u'))(\beta(r',r')+\beta(r',t.r')) \\
    &\quad =(\beta(u',u')\beta(r'r')+\beta(u',t.u')\eta(r',t.r')) + (\beta(u', t.u')\eta(r',r')+\beta(u',u')\eta(r',t.r')) \\
    &\quad =(\beta\times\eta)(u' \otimes r', u' \otimes r')+(\beta\times\eta)(u' \otimes r', t.(u' \otimes r'))
\end{align*}
must also be nonzero. At least one of $(\beta\times\eta)(u' \otimes r', u' \otimes r')$ and $(\beta\times\eta)(u' \otimes r', t.(u' \otimes r'))$ is nonzero; this cannot be the case for forms in ~\ref{eqn:C}. Hence, $\beta \hat \otimes \eta$ has no other good pairs, which proves the claim.
\end{proof}

Our work above fully determines the good pairs of $\beta\times\eta$ in the remaining cases. Now, we will find when $\beta\times\eta$ is alternating.
\begin{lem}\label{alternatingifftensor}
The form $\beta\times\eta$ is alternating if and only if at least one of $\beta$ and $\eta$ is alternating.
\end{lem}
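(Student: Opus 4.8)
The plan is to reduce the statement to a finite check on a basis of $\ker t$ inside $U\otimes R$, exploiting the characterization of alternating forms from Proposition~\ref{alternatingmaindef}. Since $\beta$ and $\eta$ are symmetric, $\beta\times\eta$ is symmetric by Proposition~\ref{products}, so by Proposition~\ref{alternatingmaindef} it is alternating if and only if $(\beta\times\eta)(s\otimes s)=0$ for every $s\in\ker t$, where $t$ acts on $U\otimes R$ by $t.(u\otimes r)=(t.u)\otimes r+u\otimes(t.r)$. Writing $s=\sum_i\lambda_i b_i$ in terms of a basis $\{b_i\}$ of $\ker t$ and using that we are in characteristic $2$, the cross terms $(\beta\times\eta)(b_i\otimes b_j)+(\beta\times\eta)(b_j\otimes b_i)$ cancel by symmetry, so $(\beta\times\eta)(s\otimes s)=\sum_i\lambda_i^2\,(\beta\times\eta)(b_i\otimes b_i)$. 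Hence it suffices to verify that $(\beta\times\eta)(b\otimes b)=0$ for each basis vector $b$ of $\ker t$.

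First I would pin down such a basis using the decompositions $U=V_U\oplus W_U\oplus X_U$ and $R=V_R\oplus W_R\oplus X_R$ from \eqref{standard_basis}. The socle-type tensors $v_i\otimes\nu_j$, $v_i\otimes\chi_l$, $x_k\otimes\nu_j$, and $x_k\otimes\chi_l$ all lie in $\ker t$, and in addition, for each pair of indecomposable summands there is a ``diagonal'' vector $x_k\otimes\omega_l+w_k\otimes\chi_l$, which $t$ annihilates because the two summands of its image coincide in characteristic $2$. A dimension count (checked against $P\otimes P\cong P\oplus P$) confirms these vectors span $\ker t$.

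Next I would evaluate $\beta\times\eta$ on the diagonal of each basis vector using the product formula~\eqref{4.2}, namely $(\beta\times\eta)(u\otimes r,u\otimes r)=\beta(u,u)\eta(r,r)+\beta(u,t.u)\eta(r,t.r)$, together with Lemma~\ref{bilform_calcs}, which gives $\beta|_{X_U\otimes X_U}=0$ and $\eta|_{X_R\otimes X_R}=0$. For a socle-type tensor $s\otimes s'$ with $s\in\ker t_U$ and $s'\in\ker t_R$ the formula collapses to $\beta(s,s)\eta(s',s')$, which vanishes whenever $s\in X_U$ (as $\beta(x,x)=0$) or $s'\in X_R$ (as $\eta(\chi,\chi)=0$); thus the only potentially nonzero contributions are the $\beta(v_i,v_i)\eta(\nu_j,\nu_j)$. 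For the diagonal vectors the two mixed cross terms cancel by symmetry, and each remaining diagonal term again carries a factor $\beta(x_k,x_k)$ or $\eta(\chi_l,\chi_l)$, hence vanishes. The upshot is that $\beta\times\eta$ is alternating if and only if $\beta(v_i,v_i)\eta(\nu_j,\nu_j)=0$ for all $i,j$.

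Finally I would translate this back: by Proposition~\ref{alternatingmaindef} together with the same characteristic-$2$ diagonalization, $\beta$ is alternating precisely when $\beta(v_i,v_i)=0$ for all $i$, and likewise $\eta$ is alternating precisely when $\eta(\nu_j,\nu_j)=0$ for all $j$. The products $\beta(v_i,v_i)\eta(\nu_j,\nu_j)$ vanish for all $i,j$ if and only if one of the two families $\{\beta(v_i,v_i)\}_i$ or $\{\eta(\nu_j,\nu_j)\}_j$ is identically zero, which is exactly the assertion that $\beta$ or $\eta$ is alternating. The main obstacle I anticipate is the bookkeeping in the second step: correctly identifying the full basis of $\ker t$ on $U\otimes R$---in particular the diagonal vectors $x_k\otimes\omega_l+w_k\otimes\chi_l$ arising from the nonsplit extensions---and confirming that they contribute nothing, since these are precisely the vectors where the $\Ver_4^+$ braiding departs from the ordinary one.
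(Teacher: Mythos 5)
Your proof is correct and takes essentially the same route as the paper: both arguments reduce the alternating property of $\beta\times\eta$ to the vanishing of $(\beta\times\eta)(v_i\otimes\nu_j,\,v_i\otimes\nu_j)=\beta(v_i,v_i)\eta(\nu_j,\nu_j)$ and then note that these products vanish for all $i,j$ precisely when one of the families $\{\beta(v_i,v_i)\}_i$ or $\{\eta(\nu_j,\nu_j)\}_j$ is identically zero. The only difference is one of thoroughness: you justify the reduction by checking every vector in an explicit basis of $\ker t \subseteq U\otimes R$ (including the vectors $x_k\otimes\omega_l+w_k\otimes\chi_l$ from the $P\otimes P$ summands), whereas the paper obtains the same reduction by decomposing $U\otimes R\cong mp\un\oplus(2nq+mq+np)P$ and applying Proposition~\ref{alternatingfirstdef} only to the $\un$-summands, leaving the vanishing of the remaining kernel contributions implicit.
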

\begin{proof}
The object $U \otimes R$ can be decomposed as $U \otimes R=(m\un \oplus nP)\otimes (p\un \oplus qP)=mp\un\oplus (2nq+mq+np)P$. A basis for $mp \un$ is given by the vectors $v_i\otimes \nu_j$ where $1\le i\le m, 1\le j\le p$. By Proposition ~\ref{alternatingfirstdef}, the form $\beta\times \eta$ is alternating when $(\beta\times\eta)(v_i\otimes \nu_j,v_i\otimes \nu_j)=0$ for all $1 \leq i \leq m, 1 \leq j \leq p$.

Expanding, we have $$(\beta\times\eta)(v_i\otimes \nu_j,v_i\otimes \nu_j)=\beta(v_i, v_i)\eta(\nu_j, \nu_j) + \beta(v_i, t.v_i)\beta(t.\nu_j, \nu_j)=\beta(v_i,v_i)\eta(\nu_j,\nu_j).$$ By Proposition ~\ref{alternatingfirstdef}, $\beta(v_i, v_i)=0$ for all $1 \leq i \leq m$ if and only if $\beta$ is alternating, and $\eta(\nu_j, \nu_j)=0$ for all $1 \leq j \leq p$ if and only if $\eta$ is alternating. Thus, $\beta\times\eta$ is alternating if and only if $\beta$ is alternating, $\eta$ is alternating, or both $\beta$ and $\eta$ are alternating.
\end{proof}

We will now describe the form invariant $\mathcal{I}_{\beta\times\eta}$ when $\beta\times\eta$ is alternating. By Propositions ~\ref{nondegenperp} and \ref{splitvfromu}, we can choose decompositions of $U$ and $R$ such that $m \un \perp nP$ and $p \un\perp qP$. This results in a decomposition $U \otimes R = mp \un \oplus mqP\oplus npP\oplus 2nqP$ where the subobjects $mp\un$, $mqP$, $npP$, and $2nqP$ are mutually orthogonal.

By Remark \ref{invariant_nP}, the form invariant of $\beta\times\eta$ is equal to the form invariant of $\beta\times\eta$ restricted to $mqP\oplus npP\oplus 2nqP$. The restrictions of $\beta\times\eta$ to $mqP$, $nqP$, and $2nqP$ are all alternating, so we can apply Lemma \ref{invariantsum} to write 
\begin{equation}\label{invariantdecomposition}    \mathcal{I}_{\beta\times\eta}=\mathcal{I}_{\beta\times\eta|_{mqP}}+\mathcal{I}_{\beta\times\eta|_{npP}}+\mathcal{I}_{\beta\times\eta|_{2nqP}}.
\end{equation} Therefore, our approach will be to determine the form invariants of the restrictions of $\beta\times\eta$ to the objects $mqP$, $npP$, and $2nqP$.
\begin{prop}\label{2nqP}
If $\beta\times\eta$ is alternating, then the form invariant of $\beta\times\eta$ restricted to $nP\otimes qP=2nqP$ is zero.
\end{prop}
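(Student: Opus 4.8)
The plan is to exploit the basis-independence of the form invariant (Theorem~\ref{invariant}) and to carry out the computation in the basis adapted to the isomorphism $nP \otimes qP \cong 2nq\,P$. Writing $\{w_k, x_k\}$ for the standard basis of $nP$ (so $t.w_k = x_k$) and $\{\omega_l, \chi_l\}$ for that of $qP$, the decomposition $P \otimes P = P \oplus P$ recalled in \S\ref{basic_properties_section} shows that the image $X$ of the $t$-action on $nP \otimes qP$ has basis $\{a_{kl}, b_{kl}\}$, where $a_{kl} = x_k \otimes \chi_l$ has preimage $w_k \otimes \chi_l$ and $b_{kl} = x_k \otimes \omega_l + w_k \otimes \chi_l$ has preimage $w_k \otimes \omega_l$. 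I would first record that the restriction $\gamma \coloneqq (\beta \times \eta)|_{nP \otimes qP}$ is a non-degenerate alternating form: it is alternating because any symmetric form on a multiple of $P$ is alternating, and non-degenerate because $2nqP$ is an orthogonal summand of the non-degenerate form $\beta \times \eta$ (Proposition~\ref{nondegenperp}). Thus its form invariant is defined.

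Next I would compute the $X$-function $f_\gamma$ and the $X$-matrix $M$ of $\gamma$ on this basis, using the bilinear product formula $(\beta\times\eta)(u\otimes r, u'\otimes r') = \beta(u,u')\eta(r,r') + \beta(u, t.u')\eta(t.r, r')$ together with Lemma~\ref{bilform_calcs}, which kills $\beta$ on $X_U \otimes X_U$ and $\eta$ on $X_R \otimes X_R$. The crucial outputs are that $f_\gamma(a_{kl}) = 0$ for all $k,l$ and that $g_\gamma(a_{k_1 l_1}, a_{k_2 l_2}) = 0$ as well. Writing $B, C$ for the (invertible) $X$-matrices of $\beta|_{nP}$ and $\eta|_{qP}$ and $K \coloneqq B \otimes C$ for their Kronecker product, one finds $g_\gamma(a_{k_1 l_1}, b_{k_2 l_2}) = B_{k_1 k_2}C_{l_1 l_2}$, so the $X$-matrix takes the block form
\[
M = \begin{pmatrix} 0 & K \\ K & L \end{pmatrix}
\]
for a suitable symmetric block $L$, where rows and columns are indexed first by the $a_{kl}$ and then by the $b_{kl}$. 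Here $K$ is invertible since $B$ and $C$ are (they are $X$-matrices of non-degenerate alternating forms), which also re-confirms that $M$ is invertible.

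The punchline is block inversion: from $M$ as above with $K$ symmetric and invertible one computes, in characteristic $2$,
\[
M^{-1} = \begin{pmatrix} K^{-1}LK^{-1} & K^{-1} \\ K^{-1} & 0 \end{pmatrix},
\]
so the entire bottom-right block of $M^{-1}$ vanishes. Since the form invariant is $\mathcal{I}_\gamma = \sum_e f_\gamma(e)(M^{-1})_{ee}$ over the basis $\{a_{kl}, b_{kl}\}$, and $f_\gamma$ vanishes on every $a_{kl}$ (exactly where $M^{-1}$ has a possibly nonzero diagonal entry) while $(M^{-1})_{ee} = 0$ on every $b_{kl}$ (exactly where $f_\gamma$ may be nonzero), every term of the sum is zero. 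Hence $\mathcal{I}_{\beta\times\eta|_{2nqP}} = 0$.

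I expect the main obstacle to be the bookkeeping that produces the block structure, in particular verifying that the top-left block $g_\gamma(a_\bullet, a_\bullet)$ is identically zero and that the off-diagonal block is exactly the Kronecker product $K = B \otimes C$, since these depend on carefully tracking which of the bilinear pairings survive after applying the product formula and Lemma~\ref{bilform_calcs}. Once the shape $\begin{pmatrix} 0 & K \\ K & L\end{pmatrix}$ and the vanishing of $f_\gamma$ on the $a$-part are established, the conclusion is immediate from block inversion.
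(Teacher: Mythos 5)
Your proof is correct and takes essentially the same approach as the paper: both arguments compute the $X$-function and $X$-form of $\beta\times\eta$ on the basis of the $t$-image spanned by the vectors $x_k\otimes\chi_l$ and $t.(w_k\otimes\omega_l)$, observe that $f$ vanishes on the first family and that the $X$-matrix has a zero diagonal block there, and then use block inversion to conclude that every term $f(e)(M^{-1})_{ee}$ of the form invariant vanishes. The only cosmetic differences are the ordering of the two blocks and that you explicitly identify the off-diagonal block as the Kronecker product $B\otimes C$ to get its invertibility, whereas the paper deduces invertibility of the off-diagonal blocks from the invertibility of $M$ guaranteed by Proposition \ref{gnondegen}.
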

\begin{proof}
 The object $2nqP$ contains the $2nq$ linearly independent vectors given by $w_i \otimes \chi_j, x_i\otimes \chi_j$ for $1 \le i\le n, 1 \le j\le q$. Observe that $t.(w_i\otimes \chi_j)=x_i\otimes \chi_j$. Now, consider $X$-function and $X$-form of $\beta\times\eta$, which we will denote by $f$ and $g$, respectively. 
For all $1\le i,k\le n,1\le j,\ell\le q$, 
 \begin{align*}
     g(x_i\otimes \chi_j,x_k\otimes \chi_\ell)&=(\beta\times\eta) (w_i\otimes \chi_j,x_k\otimes \chi_\ell)  \\&=\beta(w_i,x_k)\eta(\chi_j,\chi_\ell)+\beta(w_i,t.x_k)\eta(t.\chi_j,\chi_\ell)\\&=\beta(w_i,x_k)\eta(\chi_j,\chi_\ell)+\beta(w_i,0)\eta(0,\chi_\ell)=0.
 \end{align*} 
 
 Furthermore, for all $1\le i\le n,1\le j\le q$, $$f(x_i\otimes \chi_j)=(\beta\times\eta)(w_i\otimes \chi_j,w_i\otimes \chi_j)=\beta(w_i,w_i)\eta(\chi_j,\chi_j)+\beta(w_i,x_i)\eta(\chi_j,0)=0.$$ A basis $\{b_1,b_2,\dots ,b_{2nq}\}$ of the image of $2nqP$ under the map of the $t$-action can be constructed such that the vectors $b_{nq+1}, \dots b_{2nq}$ are given by $x_i\otimes \chi_j$, where $1 \le i\le n, 1 \le j\le q$. Using this basis, we construct the $X$-matrix of $\beta\times\eta$ restricted to $2nqP$. It is of the form
\begin{center}
$\left[\begin{tabular}{lll|lll}
&&&&&\\
&A&&&B&\\
&&&&&\\ \hline
&&&&&\\
&C&&&0&\\
&&&&&
\end{tabular}\right],$
\end{center}
where $A,B,C$ are matrix blocks and $0$ represents the zero matrix. We know by the non-degeneracy of the $X$-form (proved in Proposition ~\ref{gnondegen}) that $M$ is invertible, so $B$ and $C$ must also be invertible. We calculate that $M^{-1}$ is equal to
\begin{center}
$\left[\begin{tabular}{lll|lll}
&&&&&\\
&$0$&&&$C^{-1}$&\\
&&&&&\\ \hline
&&&&&\\
&$B^{-1}$&&&$B^{-1}AC^{-1}$&\\
&&&&&
\end{tabular}\right]$.
\end{center}
Thus, $M^{-1}_{kk}=0$ for $1\le k\le nq$ and $f(b_k)=0$ for $nq < k \le 2nq$. The form invariant of $\beta\times\eta$ restricted to $2nqP$ evaluates to $\mathcal{I}_{\beta\times\eta|_{2nqP}} = \underset{k=1}{\overset{2nq}{\sum}}f(b_k)M^{-1}_{kk}=0.$
\end{proof}

\begin{prop}\label{isnotalternating}
Suppose $\beta\times\eta$ is alternating. If $\beta$ is not alternating, then the form invariant of $\beta\times\eta$ restricted to $m \un \otimes qP=mqP$ is $m \mathcal{I}_{\eta|_{qP}}$. 
\end{prop}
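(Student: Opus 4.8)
The plan is to restrict the product form to the subobject $V \otimes qP \cong mqP$, where $V \cong m\un$ is spanned by $\{v_1,\dots,v_m\}$ and $qP \subseteq R$ is spanned by $\{\omega_1,\chi_1,\dots,\omega_q,\chi_q\}$, and then to show that this restriction splits as an orthogonal direct sum of $m$ copies of $\eta|_{qP}$. Once that is established, additivity of the form invariant (Lemma~\ref{invariantsum}) immediately gives the result.

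First I would record the general product formula coming from the definition of $\beta \times \eta$ and the explicit braiding, namely $(\beta \times \eta)(u \otimes r, u' \otimes r') = \beta(u, u')\eta(r, r') + \beta(u, t.u')\eta(t.r, r')$, which refines the diagonal computations already carried out in \eqref{4.2}. Evaluating this on vectors $v_i \otimes r$ and $v_{i'} \otimes r'$ with $r, r' \in qP$, the second summand vanishes because $t.v_{i'} = 0$, leaving $(\beta \times \eta)(v_i \otimes r, v_{i'} \otimes r') = \beta(v_i, v_{i'})\eta(r, r')$.

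Next I would exploit the hypothesis that $\beta$ is not alternating. By Proposition~\ref{alternatingfirstdef}, $\beta$ is alternating if and only if $\beta(u,u)=0$ for all $u \in \ker t = V \oplus X$; since $\beta|_{X \otimes X} = 0$ automatically by Lemma~\ref{bilform_calcs}, this is equivalent to $\beta|_V$ being alternating as an ordinary form. Hence $\beta$ not alternating forces $\beta|_V$ to be non-alternating, so in the chosen basis $\beta|_V = \alpha_1^m$ by Theorem~\ref{vforms}, i.e. $\beta(v_i, v_j) = \delta_{ij}$. Substituting this in, the restriction of $\beta \times \eta$ to $V \otimes qP$ becomes $\delta_{ii'}\,\eta(r, r')$: the summands $v_i \otimes qP$ are pairwise orthogonal, and the map $r \mapsto v_i \otimes r$ (a morphism in $\Ver_4^+$, since it commutes with $t$ as $t.v_i = 0$) identifies the form on each $v_i \otimes qP$ with $\eta|_{qP}$.

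This gives $(\beta \times \eta)|_{mqP} \cong m\cdot\eta|_{qP}$. Because $\beta$ is not alternating while $\beta \times \eta$ is, Lemma~\ref{alternatingifftensor} forces $\eta$ to be alternating, so each summand $\eta|_{qP}$ is alternating (indeed every form on $qP$ is), as is the whole restriction. Applying Lemma~\ref{invariantsum} inductively then yields $\mathcal{I}_{\beta \times \eta|_{mqP}} = m\,\mathcal{I}_{\eta|_{qP}}$, as claimed. I expect the only delicate point to be the identification $\beta|_V = \alpha_1^m$ together with the resulting orthogonal splitting; the rest is routine bookkeeping with the product formula and the additivity of the invariant.
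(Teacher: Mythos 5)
Your proposal is correct and follows essentially the same route as the paper: both decompose $mqP$ into $m$ pairwise orthogonal copies of $\un \otimes qP$, identify the form on each copy with $\eta|_{qP}$, and conclude via the additivity of the form invariant (Lemma~\ref{invariantsum}). Your write-up simply makes explicit what the paper leaves implicit, namely the product formula, the fact that the non-alternating hypothesis forces $\beta|_V = \alpha_1^m$, and the orthogonality of the summands.
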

\begin{proof}
    The object $m\un$ is the direct sum of $m$ $\un$ objects, for each of which the restriction of $\beta$ is non-degenerate. The object $mqP$ is the direct sum of $m$ copies of $\un \otimes qP$.  Each $\un \otimes qP$ object is alternating, so applying Lemma ~\ref{invariantsum} reduces the claim to proving that $\mathcal{I}_{\beta\times\eta|_{\un \otimes qP}}=\mathcal{I}_{\eta|_{qP}}$. This is true because $\beta\times\eta|_{\un \otimes qP} \cong \eta|_{qP}$.
\end{proof}

\begin{prop}\label{isalternating}
    Suppose $\beta\times\eta$ is alternating. If $\beta$ is alternating, the form invariant of $\beta\times\eta$ restricted to $m\un\otimes qP = mqP$ is zero. 
\end{prop}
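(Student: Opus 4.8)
The plan is to mirror the proof of Proposition~\ref{isnotalternating}, except that here the alternating hypothesis on $\beta$ forces the $X$-function of the restricted form to vanish identically, which makes the form invariant zero for free—no knowledge of the $X$-matrix is needed.

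First I would pin down the $P$-decomposition of $m\un \otimes qP$. Writing $\{v_1,\dots,v_m\}$ for the basis of the $m\un$ summand of $U$ (so that $t.v_i = 0$) and $\{\omega_1,\chi_1,\dots,\omega_q,\chi_q\}$ for the chosen basis of the $qP$ summand of $R$ (so that $t.\omega_k=\chi_k$), the $2mq$ vectors $v_i\otimes\omega_k$ and $v_i\otimes\chi_k$ form a basis of $mqP$. Since $t.(v_i\otimes\omega_k)=v_i\otimes\chi_k$ while $t.(v_i\otimes\chi_k)=0$, this exhibits $m\un\otimes qP$ as $mq$ copies of $P$ whose $X$-part is spanned by $\{v_i\otimes\chi_k\}$, each with preimage $v_i\otimes\omega_k$ under the $t$-action. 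Next I would compute the $X$-function $f$ of $\beta\times\eta|_{mqP}$ on this basis. Using the product formula~\eqref{4.2},
\[
f(v_i\otimes\chi_k) = (\beta\times\eta)(v_i\otimes\omega_k,\,v_i\otimes\omega_k) = \beta(v_i,v_i)\,\eta(\omega_k,\omega_k) + \beta(v_i,t.v_i)\,\eta(\omega_k,t.\omega_k).
\]
The second term vanishes because $t.v_i=0$ (so $\beta(v_i,t.v_i)=\beta(v_i,0)=0$), and the first term vanishes because $\beta$ is alternating and $v_i\in\ker t$, so $\beta(v_i,v_i)=0$ by Proposition~\ref{alternatingfirstdef}. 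Hence $f\equiv 0$ on the entire $X$-part.

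With the $X$-function identically zero, the conclusion is immediate: relabeling the $X$-basis as $\{\xi_1,\dots,\xi_{mq}\}$ and letting $M$ denote the $X$-matrix, the form invariant
\[
\mathcal{I}_{\beta\times\eta|_{mqP}} = \sum_{s=1}^{mq} f(\xi_s)\,(M^{-1})_{ss}
\]
is a sum in which every summand carries the factor $f(\xi_s)=0$, so $\mathcal{I}_{\beta\times\eta|_{mqP}}=0$ regardless of $M$. The only point to verify before invoking the definition of the form invariant is that $\beta\times\eta|_{mqP}$ is a non-degenerate alternating form, so that $f$ and $M$ are defined at all: non-degeneracy follows from the choice of decomposition in which $mqP$ is an orthogonal direct summand of $U\otimes R$, and the alternating property is inherited by restriction from the standing hypothesis that $\beta\times\eta$ is alternating (cf.\ Lemma~\ref{alternatingifftensor}). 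I do not expect any genuine obstacle here; the whole argument reduces to the single observation that the alternating condition on $\beta$ kills $\beta(v_i,v_i)$, and the rest is a direct calculation.
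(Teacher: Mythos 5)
Your proof is correct, and it takes a genuinely different — in fact shorter — route than the paper's. The paper's proof first invokes the hyperbolic structure of $\beta$ restricted to $m\un$ (writing $m\un$ as $\tfrac{m}{2}$ orthogonal copies of $2\un$ with basis $\{u_1,u_2\}$ satisfying $\beta(u_1,u_1)=\beta(u_2,u_2)=0$, $\beta(u_1,u_2)=1$), reduces to a single block $2\un\otimes qP$ via additivity of the form invariant (Lemma \ref{invariantsum}), and then only establishes vanishing of the $X$-function and $X$-form on the ``$u_1$-half'' of the basis; it compensates with a block-matrix computation showing that $M^{-1}$ has vanishing diagonal entries on the complementary half, exactly parallel to Proposition \ref{2nqP}. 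You instead work with all of $mqP$ at once and observe that the alternating hypothesis already kills \emph{every} diagonal value $\beta(v_i,v_i)$ (Proposition \ref{alternatingfirstdef}, since $v_i\in\ker t$), so the $X$-function vanishes on the entire natural basis $\{v_i\otimes\chi_k\}$ and each summand $f(\xi_s)(M^{-1})_{ss}$ of the form invariant is zero with no information about the $X$-matrix required. Your route avoids three ingredients the paper relies on — the normal form $\alpha_2^m$ of $\beta|_{m\un}$ (and with it the evenness of $m$), Lemma \ref{invariantsum}, and the block-inverse argument — at no cost; the paper's version buys only stylistic uniformity with the proof of Proposition \ref{2nqP}. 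Your care about well-definedness (non-degeneracy of the restriction via the mutually orthogonal decomposition of $U\otimes R$, and the alternating property inherited by restriction) matches what the paper uses implicitly, so there is no gap.
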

\begin{proof}
The object $m\un$ is the direct sum of $\frac{m}{2}$ $2\un$ objects, each of which has a basis $\{u_1,u_2\}$ such that $\beta(u_1,u_1)=0$, $\beta(u_2,u_2)=0$, and $\beta(u_1,u_2)=1$. The object $2\un \otimes qP$ is alternating, and $mqP$ is the direct sum of $\frac{m}{2}$ copies of $2\un \otimes qP$. Applying Lemma ~\ref{invariantsum} to these $\frac{m}{2}$ objects, we only need to show that $\mathcal{I}_{\beta\times\eta|_{2\un\otimes qP}}=0.$ We will do so by directly calculating this form invariant.

The object $2\un\otimes qP$ contains the $2q$ linearly independent vectors given by $u_1\otimes \omega_i$ and $u_1\otimes \chi_i$, where $t.(u_1\otimes \omega_i)=u_1\otimes \chi_i$ for $1\le i\le q$. Denote the $X$-function and the $X$-form of $\beta\times\eta$ by $f$ and $g$, respectively. For $1 \le i\le q$, we have $$f(u_1\otimes \chi_i)=(\beta\times\eta)(u_1\otimes \omega_i,u_1\otimes \omega_i)=\beta(u_1,u_1)\eta(\omega_i,\omega_i)=0,$$ and for all $1 \leq i,j\le q$, we have
\begin{align*}
    g(u_1\otimes \chi_i,u_1\otimes \chi_j)=(\beta\times\eta)(u_1\otimes \omega_i,u_1\otimes \chi_j)=\beta(u_1,u_1)\eta(\omega_i,\chi_j)=0.
\end{align*}

We can construct a basis $\{b_1, b_2, \dots b_{2q}\}$ of the image of $2\un\otimes qP$ under the map of the $t$-action such that the vectors $b_{q+1}, \dots, b_{2q}$ are given by $u_1\otimes \chi_i$ for $1\le i\le q$. 
Let $M$ be the $X$-matrix of $\beta\times\eta$ on this basis. By the same reasoning used for the case in Lemma ~\ref{2nqP}, $M^{-1}_{kk}=0$ for $1\le k\le q$ and $f(b_k)=0$ for $q < k \le 2q$, which proves that $$\underset{k=1}{\overset{2q}{\sum}}f(b_k)M^{-1}_{kk}=0.$$ Having shown that the form invariant of $\beta\times\eta$ restricted to each $2\un\otimes qP$ object is zero, we also have $\mathcal{I}_{\beta\times\eta|_{m\un\otimes qP}}=0$.
\end{proof}

By commutativity, the previous two lemmas prove that $\mathcal{I}_{\beta\times\eta|_{npP}} = p\mathcal{I}_{\beta |_{nP}}$ when $\eta$ is not alternating and $\mathcal{I}_{\beta\times\eta|_{npP}}=0$ when $\eta$ is alternating. 

Given an alternating form $\beta\times\eta$, we can now find the form invariant of $\beta\times\eta$ using ~\eqref{invariantdecomposition}. At least one of $\beta$ and $\eta$ must be alternating by Lemma ~\ref{alternatingifftensor}.  By Propositions ~\ref{2nqP}, ~\ref{isnotalternating}, and ~\ref{isalternating}, $\mathcal{I}_{\beta\times \eta}=0$ when both $\beta$ and $\eta$ are alternating, $\mathcal{I}_{\beta\times \eta}=m \mathcal{I}_{\eta|_{qP}}=m\mathcal{I}_\eta$ when $\beta$ is not alternating, and $\mathcal{I}_{\beta\times \eta}=p\mathcal{I}_{\beta|_{nP}}=p \mathcal{I}_\beta$ when $\eta$ is not alternating.

Our work in this section determines the good pairs of $\beta\times\eta$, when $\beta\times\eta$ is alternating, and the form invariant of $\beta\times\eta$ when the form is alternating. This enables us to calculate the multiplication on our isomorphism classes in the table below. Again, the top row describes the isomorphism class of $\beta$ (on $m\un \oplus nP$), and the leftmost column describes the isomorphism class of $\eta$ (on $p\un \oplus qP$).

\begin{center}
\begin{tabular}{|l|l|l|l|l|l|p{50mm}|l|}
\hline
 $\times$      & A & B & C & D    & E(1) & E($a$)                                                 & F($a$)  \\ \hline
A      & A & B & C & D    & E(1) & E($a$)                                                 & F($pa$) \\ \hline
B      &   & B & C & F(0) & E(1) & F($pna$)                                                   & F($pa$) \\ \hline
C      &   &   & C & C    & C    & C                                                      & C       \\ \hline
D      &   &   &   & D    & E(1) & E($a$)                                                 &  F($0$)   \\ \hline
E(1)   &   &   &   &      & C    & E(1)                                                 & E(1)    \\ \hline
E($b$) &   &   &   &      &      & $a=b\rightarrow$ D;\newline $a\neq b\rightarrow$ E($(ab+1)/(a+b)$) &  F($0$)   \\ \hline
F($b$) &   &   &   &      &      &                                                        &  F($0$)   \\ \hline
\end{tabular}
\end{center}
In the table, we again use $a$ and $b$ to denote arbitrary scalars.

\printbibliography

\newpage
\appendix
\section{Deferred Proofs}\label{appendix}
The following is a proof of Proposition \ref{products}.
\begin{proof}
    The proof is a consequence of the hexagonal diagrams that the braiding $c$ is required to satisfy. Recall that in a symmetric tensor category, the hexagonal diagrams say for any $X, Y, Z \in \mathcal{C}$ the following two equations hold: 

    \begin{equation}\label{braid_axioms}
    \begin{aligned}
        c_{X,Y\otimes Z} = 1_Y \otimes c_{X,Z} \circ c_{X,Y} \otimes 1_Z, \\
        c_{X \otimes Y, Z} = c_{X,Z} \otimes 1_Y \circ 1_X \otimes c_{Y,Z}.
    \end{aligned}
    \end{equation}
    (see \cite[pg. 195]{etingof2016tensor}). Then, we can write 

    \begin{align*}
        (\beta \times \gamma) \circ c_{V\otimes W, V\otimes W} &= m \circ (\beta \otimes \gamma) \circ (1_{V} \otimes c_{W,V} \otimes 1_W) \circ c_{V \otimes W, V \otimes W} \\
        &= m \circ (\beta \otimes \gamma) \circ (1_{V} \otimes c_{W,V} \otimes 1_W) \circ (1_V \otimes c_{V \otimes W, W}) \circ (c_{V \otimes W, V} \otimes 1_W) \\
        &= m \circ (\beta \otimes \gamma) \circ (1_{V} \otimes c_{W,V} \otimes 1_W) \circ (1_V \otimes c_{V,W} \otimes 1_W) \\ &\phantom{=}\circ (1_{V} \otimes 1_V \otimes c_{W,W}) \circ (c_{V \otimes W, V} \otimes 1_W)
    \end{align*}
    Now, because $c_{W,V} \circ c_{V,W} = 1_{V\otimes W}$, the morphisms in the middle cancel out, and we are left with

    \begin{align*}
        (\beta \times \gamma) \circ c_{V\otimes W, V\otimes W} &= m \circ (\beta \otimes \gamma) \circ (1_{V} \otimes 1_V \otimes c_{W,W}) \circ (c_{V \otimes W, V} \otimes 1_W) \\
        &= m \circ (\beta \otimes \gamma) \circ (1_{V} \otimes 1_V \otimes c_{W,W}) \circ (c_{V,V} \otimes 1_W \otimes 1_W) \\ &\hphantom{=} \circ (1_{V} \otimes c_{W,V} \otimes 1_W) \\
        &= m \circ ((\beta\circ c_{V,V}) \otimes (\gamma \circ c_{W,W})) \circ (1_{V} \otimes c_{W,V} \otimes 1_W).
    \end{align*}
    Now, depending on whether $\beta$ and $\gamma$ are symmetric or skew-symmetric, the claim follows.
\end{proof}

The following is a proof of Proposition \ref{witt_semi_ring}.
\begin{proof}
    Most of the statement is obvious, except the commutativity, which we show now. We will show that  $[\beta \times \gamma] = [\gamma \times \beta]$ for any two bilinear forms $\beta$ and $\gamma$ on $V$ and $W$, respectively (the assumption about non-degeneracy and symmetry is not necessary). To get started, we first observe that $c_{\un, \un}: \un \otimes \un \rightarrow \un \otimes \un$ can be identified with $\lambda = \pm 1$ by Schur's lemma because we have the unit isomorphism $m :\un \otimes \un \rightarrow \un$, because $\un$ is irreducible, and because $c_{\un,\un}$ squares to the identity.
    \par
    Now, we claim that $\beta \times \gamma = (\gamma \times \beta) \circ (\phi \otimes \phi)$, where the isomorphism $\phi: V \otimes W \rightarrow W \otimes V$ is given by $\phi \coloneqq \lambda^{1/2} c_{V,W}$. To see this, we start by expanding out the definition:

    \begin{align*}
        (\gamma \times \beta) \circ (\phi \otimes \phi) &= \lambda^{-1}(\gamma \times \beta) \circ c_{V,W}\otimes c_{V,W} \\
        &= \lambda m \circ (\gamma \otimes \beta) \circ (1_W \otimes c_{V,W} \otimes 1_V) \circ c_{V,W} \otimes c_{V,W}.
    \end{align*}
    Now, by the fact that the braiding is a natural isomorphism between the tensor product functor $\otimes: \mathcal{C}\boxtimes \mathcal{C} \rightarrow \mathcal{C}$ and its opposite $\otimes^{\mathrm{op}}: \mathcal{C} \boxtimes \mathcal{C} \rightarrow \mathcal{C}$, where the opposite functor $\otimes^{\mathrm{op}}$ is given by $\otimes^{\mathrm{op}}(V \boxtimes W) = W \otimes V$ and $\boxtimes$ denotes the Deligne tensor product, we have 

    \[\gamma \otimes \beta = c_{\un, \un} \circ  (\beta \otimes \gamma) \circ c_{W \otimes W, V \otimes V}.\]
    This implies that 

    \begin{align*}\label{eq1}
        (\gamma \times \beta) \circ(\phi \otimes \phi) &= \lambda^{-1}  m \circ (c_{\un, \un} \circ  (\beta \otimes \gamma) \circ c_{W \otimes W, V \otimes V}) \circ (1_W \otimes c_{V,W} \otimes 1_V) \circ c_{V,W} \otimes c_{V,W} \\
        &= m\circ  (\beta \otimes \gamma) \circ c_{W \otimes W, V \otimes V} \circ (1_W \otimes c_{V,W} \otimes 1_V) \circ c_{V,W} \otimes c_{V,W}.
    \end{align*}
    Now, by applying both of the braid axioms in \eqref{braid_axioms} once, we know that 
    
    \[c_{W \otimes W, V \otimes  V} = (1_V \otimes c_{W,V} \otimes 1_W) \circ (c_{W,V} \otimes c_{W,V}) \circ (1_W \otimes c_{W,V} \otimes 1_V),\]
    so we deduce that 

    \begin{align*}
        (\gamma \times \beta) \circ(\phi \otimes \phi)  &=  m \circ  (\beta \otimes \gamma) \circ (1_V \otimes c_{W,V} \otimes 1_W) \\
        &= \beta \times \gamma.c
    \end{align*}
    This shows $\beta \times \gamma$ and $\gamma \times \beta$ are equivalent, so $[\beta \times \gamma] = [\gamma \times \beta]$.
 \end{proof}

 The following is a proof of Lemma \ref{triangular_structure}.
 \begin{proof}
    This is a straightforward verification of the axioms in \eqref{triangular_structure_axioms}. For instance, to see that $R$ is invertible, we notice that 
    \begin{align*}
        R^2 &= (1 \otimes 1 + t \otimes t)(1 \otimes 1 + t \otimes t)\\ 
        &= 1 \otimes 1 + 2(t \otimes t) + t^2 \otimes t^2 = 1 \otimes 1,
    \end{align*}
    so $R$ is its own inverse. We can also check that 
    \begin{align*}
        (\Delta \otimes 1_A)(R) &= (\Delta \otimes 1_A)(1 \otimes 1 + t \otimes t) \\
        &= \Delta(1) \otimes 1 + \Delta(t) \otimes t \\
        &= 1 \otimes 1 \otimes 1 + 1 \otimes  t \otimes t + t \otimes 1 \otimes t \\
        &= 1 \otimes 1 \otimes 1 + 1 \otimes  t \otimes t + t \otimes 1 \otimes t + t \otimes t \otimes t^2 \\
        &= (1 \otimes 1 \otimes 1 + t \otimes 1 \otimes t) (1 \otimes 1 \otimes 1 + 1 \otimes t \otimes t)\\
        &= R^{13}R^{23}.
    \end{align*}
    \end{proof}

\end{document}